\newlength{\defbaselineskip}
\theoremstyle{plain}
\newtheorem{theorem}{Theorem}[section]
\newtheorem{proposition}[theorem]{Proposition}
\newtheorem{corollary}[theorem]{Corollary}
\newtheorem{lemma}[theorem]{Lemma}
\theoremstyle{definition}
\newtheorem{example}[theorem]{Example}
\newtheorem{definition}[theorem]{Definition}
\newtheorem{remark}[theorem]{Remark}
\newtheorem{conjecture}[theorem]{Conjecture}
\newcommand{\supp}{\operatorname{supp}}
\newcommand{\R}{\mathbb{R}}
\newcommand{\ff}{F}
\newcommand{\g}{\alpha}
\newcommand{\GG}{G}
\newcommand{\epell}{\varepsilon}
\newcommand{\lowerE}{{\textrm{u}_E}}
\newcommand{\upperE}{{\textrm{v}_E}}
\newcommand{\lowerF}{{\textrm{u}_F}}
\newcommand{\upperF}{{\textrm{v}_F}}
\theoremstyle{plain}
\newtheorem{theoremalpha}{Theorem}
\newtheorem{conjecturealpha}[theoremalpha]{Conjecture}
\theoremstyle{plain}
\theoremstyle{definition}
\numberwithin{equation}{section}
\newtheorem*{rep@theorem}{\rep@title} \newcommand{\newreptheorem}[2]{%
\newenvironment{rep#1}[1]{%
\def\rep@title{\bf #2 \ref{##1}}%
\begin{rep@theorem} }%
{\end{rep@theorem} } }
\begin{document}

\title{On the two-dimensional Jacobian conjecture: Magnus' formula revisited, II}

\author{Jacob Glidewell}
\address{Department of Mathematics, University of Alabama, Tuscaloosa, AL 35487, U.S.A.}
\email{jbglidewell@crimson.ua.edu}

\author{William E. Hurst}
\address{Department of Mathematics, University of Alabama, Tuscaloosa, AL 35487, U.S.A.}
\email{wehurst@crimson.ua.edu}

\author{Kyungyong Lee}
\address{Department of Mathematics, University of Alabama,
	Tuscaloosa, AL 35487, U.S.A. 
	and Korea Institute for Advanced Study, Seoul 02455, Republic of Korea}
\email{kyungyong.lee@ua.edu; klee1@kias.re.kr}

\author{Li Li}
\address{Department of Mathematics and Statistics,
Oakland University, 
Rochester, MI 48309, U.S.A.}
\email{li2345@oakland.edu}

\begin{abstract}
This article is part of an ongoing investigation of the two-dimensional Jacobian conjecture. In the first paper of this series, we proved the generalized Magnus' formula. In this paper, inspired by cluster algebras, we introduce a sequence of new conjectures including the remainder vanishing conjecture. This makes the generalized Magnus' formula become a useful tool to show the two-dimensional Jacobian conjecture. In the forthcoming paper(s), we plan to prove the remainder vanishing conjecture. 

\end{abstract}

\thanks{This paper grew out of an undergraduate research project for JG and WEH  at the University of Alabama. KL was supported by the University of Alabama, Korea Institute for Advanced Study, and the NSF grant DMS 2042786.} 

\maketitle

\section{introduction}\label{section_intro}
The Jacobian conjecture, raised by Keller \cite{Keller}, has been studied by many mathematicians: a partial list of related results includes \cite{AM,A,AdEs,ApOn,BCW,BK,DEZ,dBY,NVC,CW1,CW2,CZ,Dru,EssenTutaj,EssenWZ,Gwo,Hub,JZ,Kire,LM,M,MU,MO,Nagata,NaBa,Wang,Yag,Yu}.  A survey is given in \cite{Essen,vdEssen}. In this series of papers we exclusively deal with the plane case. Hence whenever we write the Jacobian conjecture, we mean the two-dimensional Jacobian conjecture.

Let $\mathcal{R}=\mathbb{C}[x,y]$. For simplicity, let $[f, g] :=\det \begin{pmatrix}{\partial f}/{\partial x} & {\partial g}/{\partial x}\\ {\partial f}/{\partial y} &{\partial g}/{\partial y}\end{pmatrix}\in \mathcal{R}$ for any pair of polynomials $f,g\in \mathcal{R}$. 

\noindent \textbf{Jacobian conjecture.} 
\emph{Let $f,g\in \mathcal{R}$. Consider the  polynomial map }$\pi: \mathcal{R}\longrightarrow \mathcal{R}$\emph{ given by }
$\pi(x)=f$\emph{ and }$\pi(y)=g$. \emph{If }$[f,g]\in \mathbb{C}\setminus\{0\}$ (in which case $(f,g)$ is called a \textit{Jacobian pair}), \emph{then }$\pi$\emph{ is bijective.}

A useful tool to study this conjecture is the Newton polygon. One source for this is \cite{CW2}, but we redefine it here. Let $f=\displaystyle\sum_{i,j\geq 0} f_{ij}x^iy^j$ be a polynomial in $\mathcal{R}$ with $f_{ij}\in \mathbb{C}$. The \textit{support} of $f$ is defined as 
\[\supp(f)=\{(i,j)\in \mathbb{Z}^2 \mid f_{ij}\neq 0\} \subseteq \R^2.\]
Define the \textit{Newton polygon} for $f\in\mathcal{R}$, denoted $N(f)$,  to be the convex hull of $\supp(f)$.  
The \textit{augmented Newton polygon} for $f\in\mathcal{R}\setminus\{0\}$, denoted $N^0(f)$,  is the convex hull of $\supp(f)\cup \{(0,0)\}$.\footnote{This is the definition of Newton polygon used in some literatures on Jacobian conjecture, for example \cite{CW2}.} If $f=0$ then let $N^0(f)=\emptyset$. A vertex of $N^0(f)$ is called a nontrivial vertex if it is not equal to $(0,0)$. 
It is clear that 
 $N(f)\subseteq N^0(f)\subseteq \mathbb{R}_{\ge0}^2$ for $f\in \mathcal{R}$. 
 
We denote by $\deg(f)$ the total degree of a polynomial $f$.  Recall that if $(f,g)$ is a Jacobian pair with $\deg(f)>1$ and $\deg(g)>1$, then $N^0(f)$ is similar to $N^0(g)$ with the origin as center of similarity  \cite[Theorem 10.2.1]{vdEssen}. 
 A result of Abhyankar (for instance, see \cite[Theorem 10.2.23 ``3)$\Leftrightarrow$4)'']{vdEssen}) asserts that the Jacobian conjecture is equivalent to the statement that 
``For an arbitrary Jacobian pair $(f,g)$, 
we must have either $\deg(f)$ divides $\deg(g)$, or $\deg(g)$ divides $\deg(f)$''.
 The latter can be restated as the following (using the fact that $[F,G]\in\mathbb{C}$ implies $N^0(G)$ is similar to $N^0(F)$):

\begin{conjecturealpha}\label{Jac_conj}
Let $a,b\in \mathbb{Z}_{>0}$ be relatively prime with $2\le a < b$. 
Suppose that $F,G\in \mathcal{R}$ satisfy the following:

\noindent\emph{(1)} $[\ff,\GG]  \in  \mathbb{C}$;

\noindent\emph{(2)} $\{(0,0),(0,1),(1,0)\}\subset N(\ff)$, and $N(\ff)$ is similar to $N(\GG)$ with
the origin as center of similarity and with ratio $\deg(\ff) : \deg(\GG) = a : b$. 

\noindent Then  $[\ff, \GG]=0$. 
\end{conjecturealpha}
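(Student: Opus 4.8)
The strategy for Conjecture~\ref{Jac_conj} is the following. Since, by the reductions recalled above, it is equivalent to the full two-dimensional Jacobian conjecture, the aim is not an unconditional proof but a reduction: the plan is to deduce it from the \emph{remainder vanishing conjecture} introduced below (through the sequence of intermediate conjectures alluded to in the abstract), with the generalized Magnus' formula of the first paper of this series as the main engine. The overall shape of the argument is to take a pair $(F,G)$ normalized as in~(2) with $[F,G]=c\in\mathbb{C}$, expand $G$ ``in generalized powers of $F$'' by means of Magnus' formula, observe that the Jacobian hypothesis $[F,G]\in\mathbb{C}$ prescribes every coefficient of this expansion recursively, and then show that the part of the expansion not already accounted for by $F$ itself --- the \emph{remainder} --- must vanish, leaving $c=0$.

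The preliminary step is to normalize the leading forms. Comparing top total-degree components in $[F,G]=c$ shows $[F_a,G_b]=0$ for the leading forms $F_a$ of $F$ and $G_b$ of $G$; since $\gcd(a,b)=1$, this forces $F_a$ and $G_b$ to be proportional to powers of a single linear form, and after a linear change of coordinates one may normalize the leading form of $F$ (say $F_a=\alpha x^a$, $G_b=\beta x^b$). Then $F$ --- and every generic fiber $\{F=t\}$ --- has a single place at infinity, which is exactly the regime in which the generalized Magnus' formula produces its canonical expansion of $G$ along the boundary of $N(F)$: a ``principal part'' built from $F$ together with an explicitly and recursively computable \emph{remainder}, all of it governed by the constant $c$.

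The heart of the program is the remainder vanishing conjecture: it asserts that, for a pair satisfying (1)--(2), this remainder is identically zero. Granting it, Magnus' formula determines $[F,G]$ completely from its principal part; and since the coprimality $\gcd(a,b)=1$ with $a \ge 2$ rules out any polynomial relation between $F$ and $G$ (such a relation would force the divisibility $a\mid b$), the only value of $[F,G]$ consistent with the formula and with the hypothesis $[F,G]\in\mathbb{C}$ is $0$. This is how Conjecture~\ref{Jac_conj}, and hence the two-dimensional Jacobian conjecture, would follow from the remainder vanishing conjecture; making the chain of implications precise --- and, in particular, pinning down the intermediate conjectures --- is part of the present paper, while the remainder vanishing conjecture itself is the target of the sequel.

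The decisive obstacle --- and the reason this paper goes no further --- is proving that the remainder vanishes. In the generalized Magnus' formula each homogeneous slice of the remainder appears as a signed sum indexed by combinatorial data attached to the edges and lattice points of $N(F)$, so the cancellation required for vanishing is invisible term by term, and one must moreover control an a priori infinite family of such polynomial identities among the coefficients of $F$ simultaneously. I expect the cluster-algebra heuristic advertised in the abstract to supply the organizing principle: reading the successive ``peelings'' of the vertices and edges of the Newton polygon as mutations, one anticipates the vanishing of the remainder to drop out as an identity reminiscent of the Laurent phenomenon. Carrying this out is the substantive work planned for the forthcoming paper(s).
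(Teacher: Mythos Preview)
Your proposal correctly identifies that Conjecture~A is not proved outright in the paper but is instead the target of a reduction program, and that the generalized Magnus' formula and a ``remainder vanishing conjecture'' are the central ingredients. However, your description of the program differs from the paper's in ways that matter.

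First, the normalization step is not a linear change making the leading form a power of $x$. The paper instead invokes a known result (Nagata, Makar-Limanov) that an automorphism $\xi$ of $\mathbb{C}[x,y]$ puts $N^0(\xi(f))$ inside a trapezoid $T^3_{m,n}$ with a distinguished vertex $(m,n)$; after a further explicit change $z\mapsto x+1$, $w\mapsto x+y$, this produces the divisibility conditions (2)--(3) of Conjecture~B. The trapezoidal shape and the divisibility by powers of $x+1$ and $x+y$ are essential to everything that follows.

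Second, and more importantly, you have the \emph{remainder} backwards. The paper does not define the remainder as what is left over when $G$ is expanded in powers of $F$. Rather, from $F$ alone one extracts a principal polynomial $E^\circ$ (the ``$F$-generator''), chooses $\alpha^\circ\in\mathbb{T}$ minimizing $N^0(F-\alpha^\circ(E^\circ))$, and defines the remainder as $F^\circ=F-\alpha^\circ(E^\circ)$. Conjecture~C asserts $F^\circ=0$, i.e., $F=\alpha^\circ(E^\circ)$ with $\deg\alpha^\circ=a\delta\ge 2$. The implication $C\Rightarrow B$ (Lemma~3.3) is then a one-line chain-rule argument: $[F,G]=(\alpha^\circ)'(E^\circ)\,[E^\circ,G]\in\mathbb{C}$ with $(\alpha^\circ)'(E^\circ)$ nonconstant forces $[E^\circ,G]=0$, hence $[F,G]=0$. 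Your account --- that coprimality of $a,b$ ``rules out any polynomial relation between $F$ and $G$'' and that this forces $[F,G]=0$ --- is not the mechanism used, and as written it does not actually yield the conclusion.

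Where Magnus' formula enters is one layer deeper: it is used to express the homogeneous pieces $G_{e-\mu}$ in terms of those of $F=\alpha^\circ(E^\circ)+F^\circ$, and the polynomiality/vanishing of these pieces imposes the system~\eqref{soeq1}. Conjecture~E (the technical ``remainder vanishing conjecture'') asserts that this system forces the divisibility of the pieces of $F^\circ$ to improve step by step; Conjecture~D packages this as a downward induction on an index $i$; and Lemma~3.6 shows $D\Rightarrow C$ by running the induction until the support of $F^\circ$ collapses to $\emptyset$. Your sketch conflates this layered structure into a single Magnus expansion of $G$ with a vanishing tail, which misrepresents both what the remainder is and how its vanishing is leveraged.
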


In this paper, inspired by cluster algebras, we introduce a sequence of new conjectures which implies Conjecture~\ref{Jac_conj}. 

\noindent\emph{Acknowledgements.} We would like to thank David Wright for valuable discussion, and Christian Valqui for numerous helpful suggestions. We also thank Rob Lazarsfeld, Lenny Makar-Limanov, and Avinash Sathaye for their correspondences.

\section{New conjectures}

Let $W=\{(0,1),(1,1),(1,0)\}\subseteq \mathbb{Z}^2$. An element  $w = (u, v) \in W$ is called a \emph{direction}. To each such a direction we consider its $w$-grading on $\mathcal{R}$.
So $\mathcal{R} = \oplus_{n\in \mathbb{Z}} \mathcal{R}^w_n$, where $\mathcal{R}^w_n=\mathcal{R}^{(u,v)}_n$ (or simply denoted by $\mathcal{R}_n$ if $w$ is clear from the context) is the $\mathbb{C}$-vector space generated by the monomials $x^iy^j$ with
$ui + vj = n$. A non-zero element $f$ of $\mathcal{R}_n$ is called a $w$-homogeneous element of $\mathcal{R}$
and $n$ is called its $w$-degree, denoted $w\text{-}\deg(f)$. 
For convenience of notation, we sometimes denote $(1,1)\text{-}\deg(f)$ simply by $\deg(f)$. 
The element of highest $w$-degree in the homogeneous decomposition of
a non-zero polynomial $f$ is called its $w$-leading form and is denoted by $f_+$. The
$w$-degree of $f$ is by definition $w\text{-}\deg(f_+)$. Here we mainly use $w=(0,1)$ or $(1,1)$, but $(1,0)$-degrees will briefly appear in the proof of Lemma~\ref{Div_to_limit1}.

For convenience, we say that $g$ is divisible by $f$, denoted $f|g$, if $g/f$ is a (Laurent) polynomial. For example, $(x+1)^{i-1}|(x+1)^i$ for all $i\in\mathbb{Z}$. This way we do not need to worry about negative exponents. 

For a  line segment $\overline{AB}\subset \mathbb{R}^2$ whose endpoints are both in $\mathbb{Z}^2\subset  \mathbb{R}^2$, we define the length ${\rm len}(\overline{AB})\in \mathbb{Z}_{\ge 0}$ to be one less than the number of lattice points on $\overline{AB}$. 
For any direction $w=(u,v)\in W$ and for any nonzero $w$-homogeneous polynomial $h\in \mathcal{R}$, we define ${\rm len}(h)$ to be the length of $N(h)$; that is, if $h= a_0x^by^c+a_1x^{b+v}y^{c-u}+a_2x^{b+2v}y^{c-2u}+\cdots+a_lx^{b+lv}y^{c-lu}$ with $a_0\neq0$ and $a_l\neq0$, then ${\rm len}(h)=l$.  If $h=0$ then we define ${\rm len}(h)$ to be equal to $-\infty$. The following lemma is obvious, but will be very useful in this paper.

\begin{lemma}\label{elem_zero} Fix $n,l \in \mathbb{Z}$. 

\noindent\emph{(1)} If $h\in\mathcal{R}^{(0,1)}_n$ is divisible by $(x+1)^l$ and ${\rm len}(h)<l$, then $h=0$. 

\noindent\emph{(2)} If $h\in\mathcal{R}^{(1,1)}_n$ is divisible by $(x+y)^l$ and ${\rm len}(h)<l$, then $h=0$. 
\end{lemma}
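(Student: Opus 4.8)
The plan is to reduce both statements to the elementary one-variable fact that a nonzero polynomial of degree $d$ is divisible by at most $d$ linear factors counted with multiplicity, once one strips off the monomial factor that $w$-homogeneity forces. Before doing anything, I would dispose of the degenerate cases: if $l\le 0$, then ${\rm len}(h)<l$ forces ${\rm len}(h)=-\infty$ (since ${\rm len}$ is nonnegative on nonzero $w$-homogeneous polynomials), i.e. $h=0$; and if $n<0$ then $\mathcal{R}^w_n=0$ in both parts, so again $h=0$. I would also note once and for all that, since $x+1$ (resp.\ $x+y$) divides no monomial, the Laurent-divisibility convention $f\mid g$ coincides with ordinary polynomial divisibility in the situations at hand. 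So assume $l\ge 1$, $n\ge 0$, and argue the contrapositive: $h\ne 0 \Rightarrow {\rm len}(h)\ge l$.

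For part (1): every monomial of $h\in\mathcal{R}^{(0,1)}_n$ has $y$-exponent $n$, so $h=y^{n}p(x)$ with $0\ne p\in\mathbb{C}[x]$. Writing $p(x)=x^{b}\tilde p(x)$ with $\tilde p(0)\ne 0$, the Newton polygon $N(h)$ is the horizontal segment from $(b,n)$ to $(b+\deg\tilde p,\,n)$, so ${\rm len}(h)=\deg\tilde p$. Because $x+1$ is irreducible in $\mathbb{C}[x,y]$ and is an associate of neither $x$ nor $y$, it is coprime to $y^{n}x^{b}$, so $(x+1)^{l}\mid h$ forces $(x+1)^{l}\mid\tilde p$; hence $l\le\deg\tilde p={\rm len}(h)$, as desired.

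For part (2) the argument is the homogeneous analogue. A nonzero $h\in\mathcal{R}^{(1,1)}_n$ is an ordinary homogeneous polynomial of degree $n$, so I write $h=x^{a}y^{b}r(x,y)$ with $r$ homogeneous, $x\nmid r$, $y\nmid r$; then $N(h)$ is the segment on the line $i+j=n$ from $(a,n-a)$ to $(n-b,b)$, so ${\rm len}(h)=\deg r$. Since $x+y$ is irreducible and not an associate of $x$ or $y$, it is coprime to $x^{a}y^{b}$, so $(x+y)^{l}\mid h$ forces $(x+y)^{l}\mid r$ and thus $l\le\deg r={\rm len}(h)$, finishing the proof. \textbf{Main obstacle.} There is no substantive difficulty here — the whole content is the "at most $\deg$ linear factors'' principle — and the only care needed is bookkeeping: confirming that the $w$-grading pins $h$ down to a monomial times a one-variable (resp.\ homogeneous) polynomial, matching the combinatorial quantity ${\rm len}(h)$ with that polynomial's degree, and making sure the sign/convention edge cases ($l\le 0$, $n<0$, Laurent divisibility) are handled so that no hypothesis is used silently. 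If one prefers, parts (1) and (2) can be unified by the single observation that the relevant irreducible linear form ($x+1$, resp.\ $x+y$) is coprime to whatever monomial $w$-homogeneity extracts from $h$.
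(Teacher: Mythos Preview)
Your proof is correct. The paper itself offers no proof of this lemma, stating only that it ``is obvious''; your argument is a sound and complete justification of that claim, with appropriate care taken over the edge cases ($l\le 0$, $n<0$) and the Laurent-versus-ordinary divisibility convention.
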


For any $m,n\in \mathbb{Z}$, we define the trapezoid $T_{m,n}$ by 
$$T_{m,n}:=\{(x,y)\in \mathbb{R}^2 \, : \,  0\le y \le n\text{ and }0\le x\le m+n-y\}.$$ 
For a picture, see the first trapezoid in Figure \ref{fig:NNN}.
Note that if $n<0$ then $T_{m,n}=\emptyset$.

Define 
$$\mathcal{Q}=\{(a,b,m,n)\in \mathbb{Z}_{>0}^4\, : \,  m<n, \ a|m,\ a|n, \  \gcd(a,b)=1\text{ and }2\le a < b\}.$$
For any $f\in \mathcal{R}$ and any  $w = (u, 1) \in W$, we write the $w$-homogeneous decomposition $f=\sum_i f^{w}_i$ where $f^{w}_i=f^{(u,1)}_i\in \mathcal{R}^w_i$. The following Conjecture \ref{Jac_conj2} is inspired from divisibility conditions for greedy basis elements of rank 2 cluster algebras (\cite[Theorem 2.2]{LLS},\cite[Theorem 4]{LLRZ}). We shall show that Conjecture \ref{Jac_conj2} implies Conjecture~\ref{Jac_conj} (see Lemma~\ref{BtoA}), hence implies the Jacobian conjecture. 

\begin{conjecturealpha}\label{Jac_conj2}
Let $(a, b, m, n) \in \mathcal{Q}$. 
Suppose that $\ff,\GG\in \mathbb{C}[x,y]$ satisfy the following:

\noindent\emph{(1)} $N^0(\ff)=T_{m,n}$ and $N^0(\ff - x^m y^n)\subsetneq N^0(\ff)$;

\noindent\emph{(2)} $F^{(0,1)}_{n-i}$ is divisible by $(x+1)^{m-i}$ for $0\le i\le m$; 

\noindent\emph{(3)} $F^{(1,1)}_{m+n-i}$ is divisible by $(x+y)^{n-i}$ for $0\le i\le n$;

\noindent\emph{(4)} $N^0(\GG)=T_{bm/a,bn/a}$ and $N^0(\GG - x^{bm/a} y^{bn/a})\subsetneq N^0(\GG)$; 

\noindent\emph{(5)} $[\ff,\GG]  \in  \mathbb{C}$.

\noindent Then  $[\ff, \GG]=0$. 
\end{conjecturealpha}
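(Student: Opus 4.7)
The plan is to argue by contradiction: assume $[F,G] = c \in \mathbb{C}^*$ and combine the divisibility structure in both gradings with the generalized Magnus formula from the first paper in this series. My first step is to pin down the top $(1,1)$- and $(0,1)$-homogeneous forms of $F$. The component $F^{(1,1)}_{m+n}$ has support on the slanted edge of $T_{m,n}$ from $(m+n,0)$ to $(m,n)$, so it factors as $x^m Q(x,y)$ with $Q$ homogeneous of degree $n$; condition (3) with $i=0$ forces $(x+y)^n \mid x^m Q$, hence $Q = \lambda(x+y)^n$, and condition (1) normalizes $\lambda = 1$. So $F^{(1,1)}_{m+n} = x^m(x+y)^n$, and an identical argument from condition (2) gives $F^{(0,1)}_n = y^n(x+1)^m$.

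Second, I would transfer these leading forms to $G$. Since $[F,G]$ has $(1,1)$-degree $\le 0$ while the generic top contribution has $(1,1)$-degree $(m+n)(a+b)/a - 2 > 0$, the $(1,1)$-leading forms must satisfy $[F_+^{(1,1)}, G_+^{(1,1)}] = 0$. Homogeneity together with $\gcd(a,b) = 1$ then forces $G_+^{(1,1)} = \mu\, x^{bm/a}(x+y)^{bn/a}$, and condition (4) gives $\mu = 1$; analogously $G_+^{(0,1)} = y^{bn/a}(x+1)^{bm/a}$. I would then lift this divisibility inductively to every lower homogeneous component of $G$: reading off the vanishing of each positive-degree $(1,1)$-homogeneous piece of $[F,G]$ gives identities linking $G^{(1,1)}_{b(m+n)/a - j}$ to lower-order data, from which Lemma~\ref{elem_zero}(2) extracts divisibility of $G^{(1,1)}_{b(m+n)/a - j}$ by $(x+y)^{bn/a - j}$ (the length bound demanded by the lemma is supplied by $N^0(G) = T_{bm/a,\, bn/a}$). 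A parallel $(0,1)$-induction produces the corresponding $(x+1)$-divisibilities on the homogeneous pieces of $G$.

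Third, I would feed the full divisibility data for $F$ and $G$ into the generalized Magnus formula. That formula expresses $G$ as an explicit expansion built from $F$ whose remainder vanishes precisely when $F$ and $G$ are algebraically dependent. Vanishing of the remainder under the hypotheses of the conjecture — the \emph{remainder vanishing conjecture} advertised in the introduction — would immediately yield $[F,G]=0$, contradicting $c \ne 0$ and completing the argument.

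The main obstacle is the third step: the remainder vanishing is itself the central open problem. The inductive lifting in the second step is technical but essentially formal, whereas integrating the two divisibility structures — by $(x+1)$- and $(x+y)$-powers, which come from inequivalent one-parameter families — into a single recursive Magnus identity is where the real difficulty lies, and it is precisely this point the authors defer to forthcoming papers.
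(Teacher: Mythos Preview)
The statement you are attempting to prove is Conjecture~\ref{Jac_conj2} in the paper, and the paper does \emph{not} prove it: it is explicitly left open. What the paper does is embed it in a chain of implications (Conjecture~\ref{Jac_conj5} $\Rightarrow$ Conjecture~\ref{Jac_conj4} $\Rightarrow$ Conjecture~\ref{Jac_conj3} $\Rightarrow$ Conjecture~\ref{Jac_conj2} $\Rightarrow$ Conjecture~\ref{Jac_conj}) and defer the base, the remainder vanishing conjecture, to future work. Your proposal is therefore not being compared against an existing proof; and since your own third step invokes that same open conjecture, your argument is at best a conditional reduction, not a proof. You acknowledge this, so in that sense there is no disagreement.

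That said, your reduction diverges from the paper's in a way that introduces a genuine gap. The paper never transfers the divisibility structure to $G$; instead it constructs an approximate $a$-th root $Q$ of $F$, extracts a principal generator $E^\circ$, defines the remainder $F^\circ = F - \alpha^\circ(E^\circ)$, and then tracks divisibility and support conditions on $F^\circ$ through a carefully indexed descent (Definition~\ref{def:ith divisibility}, Lemmas~\ref{multiple_adelta}, \ref{Div_to_limit1}, \ref{Div_to_limit2}, \ref{DtoC}). Your second step instead asserts an inductive lifting of $(x+y)$- and $(x+1)$-divisibility directly to the homogeneous pieces of $G$, appealing to Lemma~\ref{elem_zero}. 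This is not obviously valid: the bracket identity $[F,G]^{(1,1)}_{(m+n)(1+b/a)-2-j}=0$ mixes many pairs $F^{(1,1)}_{m+n-k}\,G^{(1,1)}_{b(m+n)/a-(j-k)}$, and isolating the top $G$-piece requires dividing by $F^{(1,1)}_{m+n}=x^m(x+y)^n$, which destroys polynomiality and hence the length bound Lemma~\ref{elem_zero} needs. The paper's route through $F^\circ$ exists precisely to avoid this; your shortcut would need substantial repair before it could serve even as a conditional reduction.
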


\begin{remark}
It can be shown that the condition(s) (2) and/or (3) is equivalent to saying that there exists an automorphism  $\xi$ of $\mathbb{C}[x,y]$ such that $N^0(\xi(F))$ is contained in a smaller trapezoid. For a more precise statement and its proof, see \cite[Proposition 3.9]{GGV} or Lemma~\ref{lemma:N(F) change variables}.
One may wonder why we formulated conjecture~\ref{Jac_conj2} the way we did here instead of using known ``smaller" trapezoidal shapes (for instance, as given in \cite{CN}). The reason is implicitly given in \cite{HLLN}, where we associated divisibility conditions with the geometry of Newton polygons.  Basically if we consider divisibility by (a power of) a binomial such as $x+1$ or $x+y$, then Lemma~\ref{elem_zero} will be very useful. If we apply the generalized Magnus' formula to a smaller trapezoid, we are forced to divide a polynomial by a monomial, which does not help our argument. 
\end{remark}

For convenience, we will use upper letters $\ff,\GG$ to denote polynomials satisfying partial or all conditions (1)--(5) of Conjecture \ref{Jac_conj2}, and use lower letters $f,g$ to denote polynomials in general. 

It is easy to see that if $[F,G]  \in  \mathbb{C}$ and $F=\alpha(E)$ for some $E\in \mathcal{R}\setminus \mathbb{C}$ and some $\alpha(z)\in \mathbb{C}[z]$ with $\deg(\alpha)\ge2$, then $[\ff,\GG]=0$ (see Lemma~\ref{CtoB}). In this series of papers, we plan to prove Conjecture~\ref{Jac_conj2} by constructing such $E$ and $\alpha$.

For a real number $r\in \mathbb{R}$ and a subset $S\subseteq\mathbb{R}^2$, denote $rS:=\{rs\ : s\in S\}\subseteq\mathbb{R}^2$.  

For $\ff$ satisfying the condition (1) in Conjecture \ref{Jac_conj2}, denote $C=(m,n)\in \mathbb{R}^2$, $\mathcal{N}'=\frac{1}{a} N^0(\ff)=\frac{1}{a}T_{m,n}$, and $\mathcal{N}''=\mathcal{N}'+\frac{a-1}{a}\overrightarrow{OC}$.
See Figure \ref{fig:example} for examples of $a=2$ and $a=4$.

\begin{figure}[h]
\begin{center}
\begin{tikzpicture}[scale=0.50]
\usetikzlibrary{patterns}
\draw (0,8)--(4,8)--(12,0);
\draw (0,0)--(0,4)--(2,4)--(6,0)--(0,0);
\fill[black!10] (0,0)--(0,4)--(2,4)--(6,0)--(0,0);
\draw (1,2) node[anchor=west] {\small $\mathcal{N}'$};
\draw (2,4)--(2,8)--(4,8)--(8,4)--(2,4);
\fill[blue!10] (2,4)--(2,8)--(4,8)--(8,4)--(2,4);
\draw (3,6) node[anchor=west] {\small $\mathcal{N}''$};
\draw (4,8) node {\huge .};
\draw (4, 8) node[anchor=south west] {\small $C$};
\draw[->] (0,0) -- (13,0)
node[above] {\tiny $x$};
\draw[->] (0,0) -- (0,9)
node[left] {\tiny $y$};

\begin{scope}[shift={(15,0)}]
\usetikzlibrary{patterns}
\draw (0,8)--(4,8)--(12,0);
\draw (0,0)--(0,2)--(1,2)--(3,0)--(0,0);
\fill[black!10] (0,0)--(0,2)--(1,2)--(3,0)--(0,0);
\draw (.2,1) node[anchor=west] {\small $\mathcal{N}'$};
\draw (3,6)--(3,8)--(4,8)--(6,6)--(3,6);
\fill[blue!10] (3,6)--(3,8)--(4,8)--(6,6)--(3,6);
\draw (3,7) node[anchor=west] {\small $\mathcal{N}''$};
\draw (4,8) node {\huge .};
\draw (4, 8) node[anchor=south west] {\small $C$};
\draw[->] (0,0) -- (13,0)
node[above] {\tiny $x$};
\draw[->] (0,0) -- (0,9)
node[left] {\tiny $y$};
\end{scope}
\end{tikzpicture}
\end{center}
\caption{$N^0(F)$ together with $\mathcal{N}'=N^0(Q)$ and $\mathcal{N}''$ for  $a=2$ (Left) and $a=4$ (Right).}
\label{fig:example}
\end{figure}
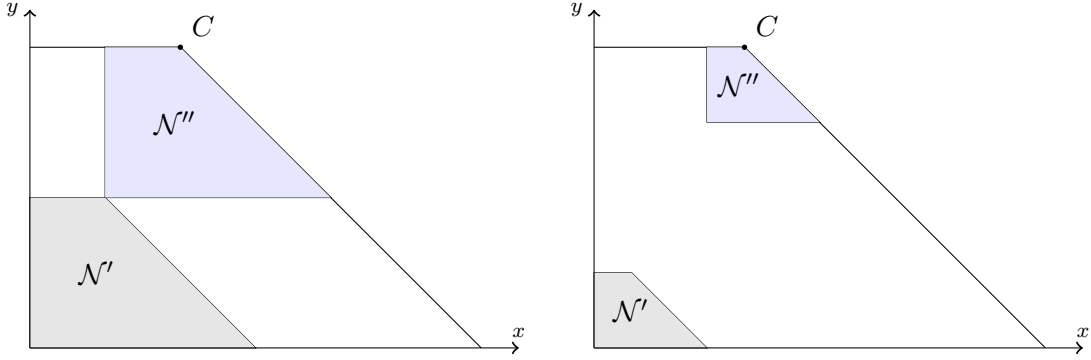

We need the following lemma, whose proof is similar to \cite[Lemma 2.6]{HLLN}.
\begin{lemma}\label{lem:c=p^2 most generalized}
Let $\ff$ satisfy the conditions {\rm(1)--(3)} in Conjecture \ref{Jac_conj2}.   

(i) There exists a unique polynomial $Q\in \mathcal{R}$ such that $N^0(Q - x^{m/a} y^{n/a})\subsetneq N^0(Q)=\mathcal{N}'$ and ${\rm supp}(\ff-Q^a)\subseteq N^0(\ff)\setminus \mathcal{N}''$.

(ii) For $Q$ obtained from (i), we have $$Q^{(0,1)}_{n/a}=(x+1)^{m/a}y^{n/a}\quad \text{and}\quad Q^{(1,1)}_{(m+n)/a} = x^{m/a}(x+y)^{n/a}.$$
\end{lemma}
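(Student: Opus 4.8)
\textbf{Proof proposal for Lemma~\ref{lem:c=p^2 most generalized}.}

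The plan is to construct $Q$ by successive approximation, peeling off one $(0,1)$-homogeneous layer at a time and using Lemma~\ref{elem_zero} to control the supports. Write $F = \sum_{i=0}^{n} F^{(0,1)}_{n-i}$, so the top layer $F^{(0,1)}_n$ is $(x+1)^m y^n$ up to scalar (after rescaling $x^m y^n$ to be monic, which is harmless). I would first build $Q$ layer by layer so that $(Q^a)^{(0,1)}_{n-i} = F^{(0,1)}_{n-i}$ for $0 \le i \le m$; since $Q$ has $(0,1)$-degree $n/a$, this means matching the top $m+1$ layers of $F$. Writing $Q = \sum_{j=0}^{n/a} Q^{(0,1)}_{n/a - j}$ with $Q^{(0,1)}_{n/a} = (x+1)^{m/a} y^{n/a}$ forced by condition (2) at $i=0$, the equation $(Q^a)^{(0,1)}_{n-i} = F^{(0,1)}_{n-i}$ reads
\[
a\,\big(Q^{(0,1)}_{n/a}\big)^{a-1} Q^{(0,1)}_{n/a - i} \;=\; F^{(0,1)}_{n-i} \;-\; (\text{terms in lower }Q\text{-layers already determined}),
\]
so $Q^{(0,1)}_{n/a - i}$ is obtained by dividing a known $(0,1)$-homogeneous polynomial by $a\,(x+1)^{(a-1)m/a} y^{(a-1)n/a}$. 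The key point is that this division is exact: condition (2) says $(x+1)^{m-i} \mid F^{(0,1)}_{n-i}$, and by induction the correction terms are divisible by $(x+1)^{m-i}$ as well (each product of lower $Q$-layers carries enough powers of $(x+1)$), so the quotient is a genuine Laurent polynomial, and a check of lengths via Lemma~\ref{elem_zero} shows it actually lies in $\mathcal{R}$ with $\mathrm{len} \le m/a - i$. This determines $Q$ uniquely (given the leading-coefficient normalization) and guarantees $N^0(Q) = \mathcal{N}'$: the support sits in $T_{m/a,\,n/a}$ by the length bounds, and the vertex $x^{m/a}y^{n/a}$ is present from the top layer.

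Next I would verify part (ii) for the $(1,1)$-direction and simultaneously check that the support of $F - Q^a$ avoids $\mathcal{N}''$. For the $(1,1)$-claim: $Q^{(1,1)}_{(m+n)/a}$ is the $(1,1)$-leading form of the $Q$ just constructed; its $(1,1)$-degree is $(m+n)/a$, and one computes directly that $(Q^a)^{(1,1)}_{m+n} = (Q^{(1,1)}_{(m+n)/a})^a$ must equal $F^{(1,1)}_{m+n}$, which by condition (3) at $i=0$ equals (up to scalar) $x^n (x+y)^n$. Taking $a$-th roots of $(1,1)$-homogeneous polynomials is unique up to a root of unity among polynomials of the prescribed degree, so $Q^{(1,1)}_{(m+n)/a} = x^{m/a}(x+y)^{n/a}$, with the scalar pinned down by the normalization. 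For the support statement: $F - Q^a$ has no terms in $(0,1)$-degrees $n, n-1, \dots, n-m$ by construction, and its support lies in $N^0(F) = T_{m,n}$; the region $\mathcal{N}'' = \mathcal{N}' + \frac{a-1}{a}\overrightarrow{OC}$ is precisely the part of $T_{m,n}$ cut out by those top $(0,1)$-layers (this is a direct lattice-geometry computation with the trapezoid), so killing layers $n-m$ through $n$ is exactly what removes $\mathcal{N}''$.

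The main obstacle, and the place requiring genuine care rather than routine bookkeeping, is the inductive divisibility claim in the layer-by-layer construction: I must show that at each step $i$ the right-hand side $F^{(0,1)}_{n-i} - (\text{products of earlier }Q\text{-layers})$ is divisible by the full power $(x+1)^{m-i}$ (not just $(x+1)^{(a-1)m/a}$, which would only give a Laurent quotient). This requires tracking, for each monomial $Q^{(0,1)}_{n/a - i_1} \cdots Q^{(0,1)}_{n/a - i_k}$ appearing in $(Q^a)^{(0,1)}_{n-i}$ with $i_1 + \cdots + i_k = i$ (and $k \le a$), that it is divisible by $(x+1)^{\sum (m/a - i_j) + (a-k)\cdot 0}$... and verifying $\sum_j (m/a - i_j) \ge m - i$ is false in general — so the real argument must instead use Lemma~\ref{elem_zero} in the contrapositive: assume $Q$ built only to match layers with the weaker divisibility, show $F - Q^a$ restricted to each offending layer is both divisible by a high power of $(x+1)$ and too short, hence zero. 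This is exactly the mechanism of \cite[Lemma 2.6]{HLLN}, and adapting its length estimates to the present trapezoidal setup (where $N^0(F) = T_{m,n}$ rather than a triangle) is the crux; everything else is the formal shape of that argument transported verbatim. I expect the $(1,1)$-part of (ii) to follow almost immediately once the $(0,1)$-construction is in place, since it only concerns the single leading form.
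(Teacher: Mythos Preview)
Your approach has a concrete geometric error that undermines the whole strategy. The region $\mathcal{N}'' = \mathcal{N}' + \frac{a-1}{a}\overrightarrow{OC}$ is the translate of $T_{m/a,n/a}$ by $\big(\frac{(a-1)m}{a},\frac{(a-1)n}{a}\big)$, so it occupies the $y$-range $[\,n(a-1)/a,\,n\,]$ --- that is the top $n/a+1$ horizontal layers, not the top $m+1$ --- and even within those layers it only covers the part with $x \ge m(a-1)/a$, not the full width. Thus ``killing layers $n-m$ through $n$'' is neither the correct count nor equivalent to removing $\mathcal{N}''$ from $\mathrm{supp}(F-Q^a)$. More seriously, matching \emph{full} $(0,1)$-layers imposes strictly more linear conditions than $Q$ has coefficients, and in fact $(F-Q^a)^{(0,1)}_{n-i}$ is generally nonzero once $i > m/a$ (compare Proposition~\ref{PQdivisible}(i), which only establishes vanishing for $0\le i\le m/a$). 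Your inductive division step would therefore fail at some stage, and the proposed rescue via Lemma~\ref{elem_zero} cannot succeed because the target equality $(Q^a)^{(0,1)}_{n-i}=F^{(0,1)}_{n-i}$ is simply false in the claimed range.

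The paper's argument is both shorter and sidesteps all divisibility issues for part (i). It orders the lattice points $z_1,z_2,\dots$ of $\mathcal{N}'$ by a generic linear functional (with $z_1=C/a$), and determines the coefficients $q_k$ of $Q$ one at a time by matching the coefficient of the monomial at $(a-1)z_1+z_k$ in $F$ and in $Q^a$. Since $(a-1)z_1+z_k$ runs exactly over the lattice points of $\mathcal{N}''$, this is a square system; the ordering makes it triangular with diagonal entry $a\,q_1^{a-1}=a\ne 0$, so each $q_k$ is uniquely and unconditionally solvable --- conditions (2) and (3) are never invoked. They enter only in part (ii): condition (2) at $i=0$ forces $F^{(0,1)}_n=(x+1)^m y^n$, whence $Q^{(0,1)}_{n/a}=(x+1)^{m/a}y^{n/a}$ by uniqueness of the $a$-th root with leading coefficient $1$, and likewise condition (3) at $i=0$ gives $F^{(1,1)}_{m+n}=x^m(x+y)^n$ (your $x^n(x+y)^n$ is a slip) and hence $Q^{(1,1)}_{(m+n)/a}=x^{m/a}(x+y)^{n/a}$.
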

\begin{proof}
Fix two positive numbers $r_1\in\mathbb{Q}$ and $r_2\in \mathbb{R}\setminus\mathbb{Q}$ such that $\{(x,y)\ | r_1 x+r_2 y=r_1 m+r_2 n\}\cap N^0(F)=\{C\}$. Then $N^0(F)$ lies in the half plane $r_1 x+r_2 y\le r_1 m+ r_2 n$. Let $n'$ be the number of lattice points in $\mathcal{N}'$. Arrange lattice points $\{z_i=(x_i,y_i)\}_{1\le i\le n'}$ in $\mathcal{N}'$ such that $r_1 x_1+r_2 y_1>r_1 x_2+r_2 y_2>\cdots>r_1 x_{n'}+r_2 y_{n'}$. Then $z_1=\frac{1}{a}C=(m/a,n/a)$. For $i=1,\dots,n'$, denote ${\bf x}^{z_i}=x^{x_i}y^{y_i}$. 

We inductively define the sequence of complex numbers $q_1,...,q_{n'}$ as follows. As a base step, let $q_{1}=1$. As an inductive step, 
assume $q_{1},\dots,q_{k-1}$ are already uniquely determined for some $k>1$. 
Note that $(a-1)z_1+z_k\in\mathcal{N}''$.
The next entry $q_k$ will be determined by the condition that the coefficient of ${\bf x}^{(a-1)z_1+z_k}$ in $F$ is the same as the coefficient of ${\bf x}^{(a-1)z_1+z_k}$ in $(\sum_{i=1}^k q_{i}{\bf x}^{z_i})^a$. Let  $\ff=\sum_{i,j}\lambda_{(i,j)}x^iy^j$.

Since
$$\left(\sum_{i=1}^k q_{i}{\bf x}^{z_i}\right)^a=\sum q_{i_1}q_{i_2}\cdots q_{i_a}{\bf x}^{z_{i_1}+z_{i_2}+\cdots+z_{i_a}},$$
we have
$$\aligned
\lambda_{(a-1)z_1+z_k}=\sum_{z_{i_1}+\cdots+z_{i_a}=(a-1)z_1+z_k} q_{i_1}\cdots q_{i_a}&={a\choose 1}q_1^{a-1}q_{k}+\sum_{\stackrel{z_{i_1}+\cdots+z_{i_a}=(a-1)z_1+z_k}{i_1,\dots,i_a<k}}q_{i_1}\cdots q_{i_a},\endaligned$$
which  uniquely determines  $q_{k}$. Since $\mathcal{N}''=\{(a-1)z_1+z_k\ | \  1\le k\le n'\}$, once $q_1,...,q_{n'}$ are defined, the polynomial $Q=\sum_{i=1}^{n'} q_{i}{\bf x}^{z_i}$ satisfies the desired conditions in (i). The statement (ii) easily follows from construction of $q_1,...,q_{n'}$.
\end{proof}

\begin{definition}
Let $\mathbb{T}$ be the set of  Tschirnhausen polynomials, that is, 
$$\mathbb{T}=\{ \g(z)=z^k + e_{k-1} z^{k-1} + \cdots + e_0 z^0 \in \mathbb{C}[z] \, : \, k\in \mathbb{Z}_{>0}, \, e_{k-1}=0,\text{ and }e_{k-2},\cdots,e_0\in \mathbb{C} \}.$$ For a polynomial $f\in \mathcal{R}$, let $\mathcal{E}(f)=\{ E\in \mathcal{R} \, : \, f=\g(E)\text{ for some }\g(z)\in \mathbb{T}\}$. Note that $f\in \mathcal{E}(f)$.  An element  $f\in \mathcal{R}$ with $\mathcal{E}(f) = \{f\}$ is called a \emph{principal} polynomial. 
\end{definition}

\begin{example}
For $f=(3x^2+y)^{12}+(3x^2+y)^4+1$, below we show some elements in $\mathcal{E}(f)$ and the corresponding $h$:

$E=f$, $\g=z$;

$E=\omega(3x^2+y)^4$, $\g=z^3+\omega^{-1}z+1$, where $\omega^3=1$;

$E=\omega (3x^2+y)^2$, $\g=z^6+\omega^{-2}z^2+1$,  where $\omega^6=1$;

$E=\omega (3x^2+y)$, $\g=z^{12}+\omega^{-4}z^4+1$,  where $\omega^{12}=1$.

Among the above,  the polynomials $\omega(3x^2+y)$, with $\omega^{12}=1$,  are principal polynomials. 
\end{example}

Lemma~\ref{unique_principal} below will show that $\mathcal{E}(f)$ contains a unique principal polynomial up to roots of unity. Before that, we want to first prove Lemma \ref{maximal_factorization} using algebraic geometry.

We recall some simple facts from algebraic geometry. A polynomial $f\in\mathbb{C}[x,y]$ determines a homomorphism $f:\mathbb{C}[z]\to \mathbb{C}[x,y]$ sending $z$ to $f$, thus induces a morphism $\phi_f: \mathbb{A}^2\to \mathbb{A}^1$ (which is the polynomial function determined by $f$). Conversely, every morphism $\mathbb{A}^2\to \mathbb{A}^1$ is determined by a unique polynomial. 
The following are equivalent: ``$f$ is not a constant function'' $\Leftrightarrow$ ``$\phi_f$ is a non-constant morphism'' $\Leftrightarrow$  ``$\phi_f$ is surjective''. Similarly, $\g\in \mathbb{T}$ induces a proper and finite morphism $\phi_\g:\mathbb{A}^1\to\mathbb{A}^1$ with $\deg \phi_\g=\deg \g$, and $E\in \mathbb{C}[x,y]$ induces a morphism $\phi_E:\mathbb{A}^2\to \mathbb{A}^1$. The condition $f=\g(E)$ translates to  $\phi_f=\phi_\g\circ\phi_E$, which we call the \emph{factorization}  determined by $E$.

In the following lemma, for convenience of notation, for $f\in\mathbb{C}[x,y]$ we write $\phi_f$ simply as $f$ when no confusion should occur. By ``degree of $\phi_f$'', we mean the the $(1,1)$-degree of $f$. For $\g\in\mathbb{C}[z]$, we write $\phi_\g$ as $\g$; by ``degree of $\phi_\g$'' we mean the degree of $\g$. 

Given  a non-constant morphism  $f: \mathbb{A}^2\to\mathbb{A}^1$, we consider factorizations of $f$ of the form $f=\g\circ j$ with morphisms $j:\mathbb{A}^2\to\mathbb{A}^1$ and $\g:\mathbb{A}^1\to\mathbb{A}^1$. 
We say that two factorizations $f=\g_1\circ j_1$ and $f=\g_2\circ j_2$ are equivalent if there is an isomorphism $i:\mathbb{A}^1\to\mathbb{A}^1$ such that $\g_2=\g_1\circ i$ and $j_1=i\circ j_2$. We call $\deg(\g)$ the \emph{depth} of the factorization $f=\g\circ j$. We call $f=\g\circ j$ a deepest factorization of $f$ if there exists no other factorization of $f$ with a larger depth.

\begin{lemma}\label{maximal_factorization} Given a non-constant morphism $f: \mathbb{A}^2\to\mathbb{A}^1$, there is, up to isomorphism,  a unique universal factorization $f=\g_0\circ j_0$ in the sense that: if there is another factorization $f=\g\circ j$, then there is a morphism $i: \mathbb{A}^1\to \mathbb{A}^1$ such that $\g_0=\g\circ i$ and $j=i\circ j_0$, that is, the following diagram commutes. (Equivalently, $f=\g_0\circ j_0$ is a deepest factorization of $f$.)
$$\xymatrix{
\mathbb{A}^2\ar[rd]^(.7){j_0}\ar[rrrd]^{j}\ar@/_2pc/[ddrr]_f&\\
&\mathbb{A}^1\ar[rd]^(.6){\g_0}\ar[rr]^(.4){i}&&\mathbb{A}^1\ar[ld]^(.3){\g}\\ 
&&\mathbb{A}^1
}$$
\end{lemma}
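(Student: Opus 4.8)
The plan is to work entirely with the rings of regular functions. A factorization $f = \g\circ j$ corresponds to a chain of $\mathbb{C}$-algebra inclusions $\mathbb{C}[z] \xrightarrow{\g^*} \mathbb{C}[z] \xrightarrow{j^*} \mathbb{C}[x,y]$ whose composite is $f^*$; thus, abusively identifying $j^*$ with its image, the datum of a factorization is essentially that of a polynomial subring $\mathbb{C}[E] \subseteq \mathbb{C}[x,y]$ (with $E = j$) containing $f$, and the depth of the factorization equals $\deg_z(\g) = [\,\mathbb{C}[E] : \mathbb{C}[f]\,]$ as a field-extension degree of fraction fields (using that $\g$ is finite, so $\mathbb{C}(E)/\mathbb{C}(f)$ is finite of degree $\deg\g$). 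So a deepest factorization corresponds to a polynomial subring $\mathbb{C}[E_0]$ containing $f$ that is maximal among all such subrings, and the universality statement says this maximal subring is unique and contains every $\mathbb{C}[E]\ni f$.

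First I would record the key finiteness fact that makes everything work: if $\mathbb{C}[E]\ni f$ then $\deg E \mid \deg f$ (from $f = \g(E)$, comparing $(1,1)$-degrees, since $\g$ is a one-variable polynomial), so the depth of any factorization of $f$ is bounded by $\deg f$; hence a deepest factorization exists. Fix one, say $f = \g_0\circ j_0$, and set $R_0 = \mathbb{C}[j_0]$. I then want to show $R_0$ contains $\mathbb{C}[E]$ for every other factorization $f = \g\circ j$ with $E = j$. The natural candidate is to consider the subring $R = \mathbb{C}[j_0, j] \subseteq \mathbb{C}[x,y]$ generated by both; since $f$ lies in both $\mathbb{C}[j_0]$ and $\mathbb{C}[j]$, and $\mathbb{C}(x,y)$ has transcendence degree $2$ while $\mathbb{C}(f)$ has transcendence degree $1$, the fields $\mathbb{C}(j_0)$, $\mathbb{C}(j)$ are both algebraic over $\mathbb{C}(f)$, so $\mathbb{C}(j_0,j)$ is a finite extension of $\mathbb{C}(f)$, in particular of transcendence degree $1$ over $\mathbb{C}$. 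By Lüroth's theorem, $\mathbb{C}(j_0,j) = \mathbb{C}(E_0)$ for some $E_0$. The main point — and the step I expect to be the crux — is to promote this to the level of \emph{polynomial} rings: I must show that $R = \mathbb{C}[j_0, j]$ is itself a polynomial ring $\mathbb{C}[E_0]$ (equivalently, that $E_0$ can be taken to be a polynomial and that $R$, not merely $\mathbb{C}(j_0,j)$, equals $\mathbb{C}[E_0]$). Here is where geometry earns its keep: the morphism $\mathbb{A}^2 \to \operatorname{Spec} R$ has image a curve $C$ dominating $\mathbb{A}^1$ via the maps induced by $\g_0$ and $\g$; the normalization $\tilde C$ is a smooth affine curve with a finite surjection to $\mathbb{A}^1$, and one must argue $\tilde C \cong \mathbb{A}^1$. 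The cleanest route is: the map $\mathbb{A}^2 \to \tilde C$ (lifting through the normalization, which exists since $\mathbb{A}^2$ is normal) is dominant with a section-like behavior forced by the fact that a generic fiber of $\phi_f$ is a connected curve mapping onto $\tilde C$'s generic fiber over $\mathbb{A}^1$; more robustly, $\tilde C$ is a smooth rational (by Lüroth) affine curve admitting a nonconstant map \emph{from} $\mathbb{A}^2$ whose total space is $\mathbb{A}^2$, and any such $\tilde C$ with $\mathcal{O}(\tilde C)^\times = \mathbb{C}^\times$ (which holds because units pull back to units of $\mathbb{C}[x,y]$, i.e.\ constants, and injectivity of pullback on units forces $\mathcal{O}(\tilde C)^\times = \mathbb{C}^\times$) is $\mathbb{A}^1$. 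So $R_0' := \mathcal{O}(\tilde C) = \mathbb{C}[E_0]$ is a polynomial ring, and $R = \mathbb{C}[j_0,j] \subseteq R_0'$.

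With $R_0' = \mathbb{C}[E_0] \supseteq \mathbb{C}[j_0]$ and $\mathbb{C}[E_0] \ni f$, the maximality of the depth of $f = \g_0\circ j_0$ forces $[\mathbb{C}(E_0):\mathbb{C}(f)] \le [\mathbb{C}(j_0):\mathbb{C}(f)]$, but $\mathbb{C}[j_0]\subseteq\mathbb{C}[E_0]$ gives the reverse inequality, hence equality, hence $\mathbb{C}(j_0) = \mathbb{C}(E_0)$; combined with $\mathbb{C}[j_0]\subseteq\mathbb{C}[E_0]$ and the fact that a polynomial ring $\mathbb{C}[t]$ is integrally closed with no intermediate polynomial subring strictly between $\mathbb{C}[s]$ and $\mathbb{C}[s]$ of the same fraction field other than itself — more simply, $\mathbb{C}[j_0]\subseteq\mathbb{C}[E_0]$ with equal fraction fields and both $\cong\mathbb{C}[t]$ forces $E_0 = \lambda j_0 + \mu$, so $\mathbb{C}[j_0] = \mathbb{C}[E_0]$. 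Therefore $\mathbb{C}[j] \subseteq \mathbb{C}[j_0, j] = \mathbb{C}[E_0] = \mathbb{C}[j_0] = R_0$, which says $j = i\circ j_0$ for a morphism $i:\mathbb{A}^1\to\mathbb{A}^1$ (namely the inclusion $\mathbb{C}[j]\hookrightarrow\mathbb{C}[j_0]$ on coordinate rings); and then from $f = \g\circ j = \g\circ i\circ j_0 = \g_0\circ j_0$ together with surjectivity of $j_0$ (as $j_0$ is a nonconstant morphism $\mathbb{A}^2\to\mathbb{A}^1$) we get $\g_0 = \g\circ i$, giving the commuting diagram. Finally, uniqueness of the universal factorization up to isomorphism is formal: if $f = \g_0\circ j_0$ and $f = \g_0'\circ j_0'$ are both universal, the universal property applied in both directions produces mutually inverse isomorphisms $\mathbb{A}^1\to\mathbb{A}^1$ intertwining them. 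The parenthetical equivalence with "deepest factorization" follows since a universal factorization has depth $\ge$ every other, and conversely a deepest one is universal by the argument just given. The main obstacle, as flagged, is the passage from the Lüroth statement about function fields to the statement that the concrete ring $\mathbb{C}[j_0,j]$ is polynomial; I would handle it via the normalization-of-the-image curve together with the "$\mathbb{A}^1$ is the only smooth rational affine curve with trivial units" lemma, being careful that the normalization map from $\mathbb{A}^2$ is genuinely dominant onto $\tilde C$.
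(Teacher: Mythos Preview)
Your proof is correct and follows essentially the same strategy as the paper: bound the depth, take a deepest factorization, and show that any other factorization can be merged with it into a common refinement through $\mathbb{A}^1$, forcing the deepest one to dominate. The paper phrases the merging step geometrically via the fiber product $X_{\rm fp}=\mathbb{A}^1\times_{\mathbb{A}^1}\mathbb{A}^1$ and the irreducible component $X$ containing the image of $\mathbb{A}^2$, whereas you phrase it algebraically via the subring $\mathbb{C}[j_0,j]\subseteq\mathbb{C}[x,y]$; these are dual descriptions of the same object (your $\operatorname{Spec}\mathbb{C}[j_0,j]$ is the scheme-theoretic image of $\mathbb{A}^2$ in $X_{\rm fp}$, hence coincides with $X$ after reduction). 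For rationality of this curve the paper argues that lines in $\mathbb{A}^2$ cannot map nonconstantly to a nonrational curve (Riemann--Hurwitz), while you invoke L\"uroth; again equivalent. The one place your argument is genuinely sharper is the step ``smooth affine rational curve $\Rightarrow\mathbb{A}^1$'': the paper asserts this directly, but as stated it would need the extra input that $\tilde X$ has only trivial units (otherwise $\mathbb{G}_m$ would be a counterexample, and $\mathbb{G}_m$ does admit finite maps to $\mathbb{A}^1$, e.g.\ $z\mapsto z+z^{-1}$). Your observation that $\mathcal{O}(\tilde C)\hookrightarrow\mathbb{C}[x,y]$ forces $\mathcal{O}(\tilde C)^\times=\mathbb{C}^\times$ is exactly the missing ingredient, and makes this step airtight.
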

\begin{proof}
Given any two factorizations $f=\g_1\circ j_1=\g_2\circ j_2$, we can construct a new factorization $f=\g_3\circ j_3$ which is either equivalent to one of the two factorizations, or deeper than them. Note that since $f$ is non-constant, $\g_1,\g_2,j_1,j_2$ must also be non-constant. 
$$\xymatrix{
&&\tilde{X}\ar[d]^\pi\ar@/_-10pc/[ddd]^{\g_3}\\
\mathbb{A}^2\ar[rd]^(.6){j_1}\ar[rrrd]^(.6){j_2}\ar@/_2pc/[ddrr]_f\ar[rr]^(.4)p\ar[rru]^{j_3}&&\hspace{33pt}X\subseteq X_{\rm fp} \ar[ld]^(.6){\beta_1}\ar[rd]^(.6){\beta_2}\\
&\mathbb{A}^1\ar[rd]^{\g_1}&&\mathbb{A}^1\ar[ld]_{\g_2}\\ 
&&\mathbb{A}^1
}$$

Let $X_{\rm fp}$ be the fiber product of $\g_1$ and $\g_2$. Then $f$ factors through $X_{\rm fp}$, and its image in $X_{\rm fp}$ must be irreducible. Let $X$ be the irreducible component of $X_{\rm fp}$ that contains the image of $f$. Then $f$ factors through $X$ in the sense that there is a morphism $p:\mathbb{A}^2\to X$ such that $f=\g_1\circ \beta_1\circ p=\g_2\circ \beta_2\circ p$. Let $\pi:\tilde{X}\to X$ be the normalization of $X$, then $p$ factors through $\tilde{X}$ in the sense that there is a morphism $j_3:\mathbb{A}^2\to X$ such that $p=\pi\circ j_3$. Define $\g_3=\g_1\circ \beta_1\circ \pi (=\g_2\circ \beta_2\circ \pi) : \tilde{X}\to \mathbb{A}^1$. Then $f=\g_3\circ j_3$.
Note that the curve $X$ is rational, otherwise each line in $\mathbb{A}^2$ must map to a point in $X$ because there is no non-constant rational map from a rational curve to a nonrational curve
\footnote{Here is a short sketch of proof: assume $f: C_1\to C_2$ is a non-constant rational map, where $C_1$ is a rational curve and $C_2$ is a nonrational curve. Without loss of generality we may assume that $C_1$ and $C_2$ are nonsingular projective curves. So ${\rm genus}(C_1)=0$,  ${\rm genus}(C_2)\ge1$.  
By \cite[II.2.1]{Silverman}, a rational map from a smooth projective curve to a projective curve is always a morphism. 
Thus $f$ is a morphism. Applying Riemann--Hurwitz formula to $f$, we get a contradiction $-2=2{\rm genus}(\mathbb{P}^1)-2=(\deg f)(2{\rm genus}(C_2)-2)+\sum (e_P-1)\ge 0$ where $e_P\ge1$ are ramification indices.
 }.
Since $X$ is rational and affine (but possibly singular), $\tilde{X}$ is a nonsingular affine rational curve, thus $\tilde{X}\cong\mathbb{A}^1$. Moreover since $\g_1$, $\g_1'$, and $\pi$ are all proper and non-constant, the morphism $\g_3$ is proper and non-constant. 
Note that $\deg \g_3\ge \deg \g_1$ (resp.~$\deg \g_2$), and equalities hold when the factorization $f=\g_3\circ j_3$ is equivalent to $f=\g_1\circ j_1$ (resp.~$f=\g_2\circ j_2$). 

Consider the set of all equivalence classes of the factorizations of $f$. Note that the depth of any factorization is no larger than the $(1,1)$-degree of the polynomial $f$, so there exists a deepest factorization $f=\g_0\circ j_0$ (that is, $\deg(\g_0)$ is maximal). By the above argument, we see that $f=\g_0\circ j_0$ is universal. The uniqueness of the factorization follows from the universal property.  
This completes the proof. 
\end{proof}

\begin{lemma}\label{unique_principal}
Let $f$ be a non-constant polynomial in $\mathcal{R}$. Then $\mathcal{E}(f)$ contains a unique principal polynomial up to roots of unity.
\end{lemma}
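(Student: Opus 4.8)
The plan is to leverage Lemma~\ref{maximal_factorization}. The key translation is: if $f = \g(E)$ with $\g \in \mathbb{T}$, then $\phi_f = \phi_\g \circ \phi_E$ is a factorization of the morphism $f\colon \mathbb{A}^2 \to \mathbb{A}^1$, and conversely any factorization $f = \g \circ j$ through $\g\colon \mathbb{A}^1 \to \mathbb{A}^1$ and $j\colon \mathbb{A}^2\to \mathbb{A}^1$ can be put into the form $f = \alpha(E)$ with $\alpha \in \mathbb{C}[z]$ and $E \in \mathcal{R}$ by post-composing $j$ with the isomorphism of $\mathbb{A}^1$ that kills the leading lower-order term (completing to Tschirnhausen form). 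So first I would make this correspondence precise: describe how, given a factorization $f = \g \circ j$, one rescales and translates the coordinate on the intermediate $\mathbb{A}^1$ to arrange that the resulting polynomial $\alpha$ is monic with vanishing subleading coefficient, i.e. $\alpha \in \mathbb{T}$; the freedom left after imposing "monic" and "subleading coefficient zero" is exactly multiplication by a $(\deg\alpha)$-th root of unity.

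Next I would take the universal (= deepest) factorization $f = \g_0 \circ j_0$ from Lemma~\ref{maximal_factorization}, with $j_0\colon \mathbb{A}^2 \to \mathbb{A}^1 \cong \tilde X$, and let $E_0 \in \mathcal{R}$ be the polynomial representing $j_0$ after normalizing the target coordinate to Tschirnhausen form. I claim $E_0$ is principal: if $E_0 = \beta(E')$ for some $\beta \in \mathbb{T}$ with $\deg\beta \geq 2$ and $E' \in \mathcal{R}$, then $f = \g_0(E_0) = (\g_0 \circ \beta)(E')$ would be a factorization of $f$ of depth $\deg\g_0 \cdot \deg\beta > \deg\g_0$, contradicting maximality of the depth. (One must check $\g_0\circ\beta$, or rather its renormalization, still lies in $\mathbb{T}$ — but composition of monic polynomials is monic, and one can always renormalize the inner variable, so this is routine.) Hence $E_0 \in \mathcal{E}(f)$ is principal.

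For uniqueness up to roots of unity: suppose $E_1, E_2 \in \mathcal{E}(f)$ are both principal, say $f = \g_1(E_1) = \g_2(E_2)$. By the universal property of $f = \g_0 \circ j_0$ applied to each of these factorizations, there are morphisms $i_1, i_2\colon \mathbb{A}^1 \to \mathbb{A}^1$ with $j_1 = i_1 \circ j_0$ and $j_2 = i_2 \circ j_0$ (here $j_k$ is the morphism determined by $E_k$). In polynomial terms, $E_k = i_k(E_0)$ for a polynomial $i_k$. Since $E_0$ is itself in $\mathcal{E}(f)$ and principal, and $E_k = i_k(E_0)$, the definition of principal forces $\deg i_k = 1$, i.e. $i_k$ is an affine automorphism of $\mathbb{A}^1$; writing $E_k = c_k E_0 + d_k$ and imposing that $E_k$ appears in a Tschirnhausen presentation of $f$ — equivalently, matching leading forms and subleading coefficients against those of $E_0$, which also satisfies these constraints — forces $d_k = 0$ and $c_k^{\deg\g_k} = 1$, i.e. $c_k$ is a root of unity. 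Therefore $E_1$ and $E_2$ differ by a root of unity, and combined with the previous paragraph $\mathcal{E}(f)$ contains a principal polynomial, unique up to roots of unity.

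The main obstacle I expect is the bookkeeping in the Tschirnhausen normalization — carefully verifying that the "monic, subleading coefficient zero" normalization is well defined, that it is preserved under the composition $\g_0 \circ \beta$ (after renormalizing the inner coordinate), and that it pins down the intermediate coordinate precisely up to a root of unity of the correct order. A subtle point is that "principal" is defined via $\mathcal{E}(f) = \{f\}$ for a polynomial $f$, so when I say "$E_0$ is principal" I am applying the definition to $E_0$ in place of $f$, and I must make sure the factorizations of $E_0$ as $\beta(E')$ are exactly the morphism-factorizations of $\phi_{E_0}$; but this is the same dictionary already set up before Lemma~\ref{maximal_factorization}, applied now to $E_0$, whose associated morphism $\phi_{E_0} = j_0$ has its own deepest factorization which must be trivial because any nontrivial one would deepen the factorization of $f$. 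Everything else is a direct application of the universal property.
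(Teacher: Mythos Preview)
Your approach is essentially the same as the paper's: both reduce to Lemma~\ref{maximal_factorization}, identify principal polynomials with the deepest factorization, and use the Tschirnhausen normalization to pin down the intermediate coordinate up to a root of unity. The paper is slightly more streamlined in that it asserts the biconditional ``$E$ is principal $\Leftrightarrow$ $E$ corresponds to the deepest factorization'' directly, whereas you prove existence and uniqueness separately.

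One small slip in your uniqueness argument: you write ``Since $E_0$ is itself in $\mathcal{E}(f)$ and principal, and $E_k = i_k(E_0)$, the definition of principal forces $\deg i_k = 1$.'' But the principality of $E_0$ says nothing about $\deg i_k$; what you need here is the principality of $E_k$. After normalizing $i_k$ to Tschirnhausen form (replacing $E_0$ by an affine translate $E_0'$), you get $E_0' \in \mathcal{E}(E_k) = \{E_k\}$, hence $E_0' = E_k$ and $\deg i_k = 1$. With this correction the argument goes through, and the remaining bookkeeping you flag (monic, subleading zero, root-of-unity ambiguity) is exactly what the paper does in its first paragraph.
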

\begin{proof}
As explained before Lemma~\ref{maximal_factorization}, each $E\in \mathcal{E}(f)$ determines a factorization of $f$. Two polynomials $E_1,E_2\in \mathcal{E}(f)$ determine equivalent factorizations if and only if the corresponding $\g_1$ and $\g_2$ satisfy $\phi_{\g_1}=\phi_{\g_2}\circ i$ for an isomorphism $i:\mathbb{A}^1\to\mathbb{A}^1$. Then $i(z)=az+b$ for some $a,b\in \mathbb{C}$ with $a\neq0$. Since $\g_2\in\mathbb{T}$, we can write $\alpha_2=z^k+e_{k-2}z^{k-2}+\cdots+e_0$, and $\alpha_1=(az+b)^k+e_{k-2}(az+b)^{k-2}+\cdots+e_0=a^kz^k+ka^{k-1}bz^{k-1}+\cdots$. Then $\g_1\in\mathbb{T}$ implies $a^k=1, ka^{k-1}b=0$. It follows that $a$ must be a $(\deg \g_1)$-th root of unity, and the constant $b=0$. 

By Lemma~\ref{maximal_factorization}, $E$ is a principal polynomial if and only if it corresponds to the  deepest factorization $f=\g_0\circ j_0$ which must exist and is unique. By the previous paragraph, the corresponding polynomial $E\in \mathcal{E}(f)$ is uniquely determined up to a ($\deg \g_0$)-th root of unity. 
\end{proof}

\begin{lemma}
Let $\ff$ satisfy the conditions {\rm(1)--(3)} in Conjecture \ref{Jac_conj2}, and let $Q$ be the unique polynomial such that $N^0(Q - x^{m/a} y^{n/a})\subsetneq N^0(Q)=\mathcal{N}'$ and ${\rm supp}(\ff-Q^a)\subseteq N(\ff)\setminus \mathcal{N}''$ (as in Lemma~\ref{lem:c=p^2 most generalized}). 
Then there exists a unique integer $\delta\in\mathbb{Z}_{>0}$ such that $N^0(E)=\frac{1}{\delta a} N^0(\ff) \;(=\frac{1}{\delta a} T_{m,n} )$ for any principal polynomial $E\in\mathcal{E}(Q)$. 
\end{lemma}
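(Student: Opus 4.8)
The plan is to fix a principal polynomial $E\in\mathcal{E}(Q)$, write $Q=\alpha(E)$ with $\alpha(z)=z^{\delta}+e_{\delta-2}z^{\delta-2}+\cdots+e_0\in\mathbb{T}$, put $\delta:=\deg\alpha\in\mathbb{Z}_{>0}$, and prove directly that $N^0(E)=\frac{1}{\delta a}T_{m,n}$. Uniqueness of $\delta$ and independence of the choice of $E$ are then cheap: by Lemma~\ref{unique_principal} any two principal polynomials in $\mathcal{E}(Q)$ differ by a root of unity, so they have the same Newton polygon, and a positive real $\lambda$ with $\lambda T_{m,n}=N^0(E)$ is determined by $N^0(E)$ (e.g.\ by comparing areas, which are positive since $m,n>0$); hence the integer $\delta$ is forced and is the same for every principal $E$.

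For the main computation I would use the $(1,1)$- and $(0,1)$-gradings separately. Since $Q$ is non-constant, so is $E$, whence $\deg(E)\ge1$ and the top $(1,1)$-degree part of $Q=E^{\delta}+e_{\delta-2}E^{\delta-2}+\cdots$ comes from $E^{\delta}$ alone; so the $(1,1)$-leading form $E_+$ of $E$ satisfies $(E_+)^{\delta}=Q^{(1,1)}_{(m+n)/a}=x^{m/a}(x+y)^{n/a}$ by Lemma~\ref{lem:c=p^2 most generalized}(ii). As $\mathbb{C}[x,y]$ is a UFD and $x$, $x+y$ are non-associate primes, this forces $E_+=c\,x^{p}(x+y)^{q}$ with $c^{\delta}=1$, $p:=m/(a\delta)$, $q:=n/(a\delta)$; in particular $\delta\mid m/a$, $\delta\mid n/a$, and $p,q\in\mathbb{Z}_{>0}$. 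Because the $y$-pure monomial $c\,x^{p}y^{q}$ of $E_+$ then occurs in $E$, we have $E\notin\mathbb{C}[x]$ and $(0,1)\text{-}\deg(E)\ge q\ge1$, so the same reasoning in the $(0,1)$-grading (now using $Q^{(0,1)}_{n/a}=(x+1)^{m/a}y^{n/a}$ from Lemma~\ref{lem:c=p^2 most generalized}(ii)) shows the $(0,1)$-leading form of $E$ equals $c'(x+1)^{p}y^{q}$ with $(c')^{\delta}=1$, and in particular $(0,1)\text{-}\deg(E)=q$.

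It remains to pin down $N^0(E)$ from these two leading forms. On one hand $N^0(E)$ lies in the intersection of the half-planes $x\ge0$, $y\ge0$ (as $E$ is a polynomial), $x+y\le p+q$ (as $(1,1)\text{-}\deg(E)=p+q$), and $y\le q$ (as $(0,1)\text{-}\deg(E)=q$); since $p,q>0$ this intersection is exactly the trapezoid with vertices $(0,0),(0,q),(p,q),(p+q,0)$, which is $T_{p,q}=\frac{1}{\delta a}T_{m,n}$. On the other hand $\supp(E)$ contains the supports of the $(0,1)$- and $(1,1)$-leading forms of $E$, hence contains $(0,q)$, $(p,q)$ and $(p+q,0)$; together with $(0,0)\in N^0(E)$ this gives $N^0(E)\supseteq T_{p,q}$. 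Therefore $N^0(E)=T_{p,q}=\frac{1}{\delta a}T_{m,n}=\frac{1}{\delta a}N^0(\ff)$, and the first paragraph completes the argument. The only step I expect to require care is the claim that the top-degree part of $Q=\alpha(E)$ is that of $E^{\delta}$ in each of the two gradings — i.e.\ the identities $(E_+)^{\delta}=Q^{(1,1)}_{(m+n)/a}$ and its $(0,1)$-analogue — which is exactly where one uses that $E$ is non-constant in total degree and is not a polynomial in $x$ alone; the rest is unique factorization and elementary convexity.
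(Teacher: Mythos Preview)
Your argument is correct, but it takes a different route from the paper's. The paper's proof is a one-liner based on a general Newton-polygon fact: writing $Q=\alpha(E)=E^{k}+e_{k-2}E^{k-2}+\cdots+e_0$, the nontrivial vertices of $N^0(E^{k})=kN^0(E)$ are not contained in $N^0(E^{i})=iN^0(E)$ for any $i<k$, so they survive in $Q$ and $N^0(Q)=kN^0(E)$; since $N^0(Q)=\mathcal{N}'=\frac{1}{a}T_{m,n}$, this immediately gives $N^0(E)=\frac{1}{ka}T_{m,n}$ with $\delta=k$, and uniqueness follows from Lemma~\ref{unique_principal}. Your approach instead computes the $(1,1)$- and $(0,1)$-leading forms of $E$ explicitly from those of $Q$ (via Lemma~\ref{lem:c=p^2 most generalized}(ii) and unique factorization) and reconstructs the trapezoid from the resulting vertices. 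This is longer but perfectly valid; it has the incidental benefit of establishing along the way the explicit leading forms $(E_+)^{(1,1)}=c\,x^{m/(a\delta)}(x+y)^{n/(a\delta)}$ and $(E_+)^{(0,1)}=c'(x+1)^{m/(a\delta)}y^{n/(a\delta)}$, which the paper proves separately in Lemma~\ref{Ecirctopdegpieces}. The paper's argument, on the other hand, is more robust: it works for any shape of $N^0(Q)$, not just trapezoids, and does not rely on knowing the leading forms of $Q$ explicitly.
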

\begin{proof}
Let $E\in\mathcal{E}(Q)$ be a principal polynomial . Then there is $\g\in\mathbb{T}$ such that $Q=\g(E)=E^k + e_{k-2} E^{k-2} + \cdots + e_0 E$. Note that each vertex of $N^0(E^k)$ that is not the origin must not be contained in $N^0(E^i)$ for $i<k$, we conclude that $N^0(Q)=kN^0(E)$. Letting $\delta=k$, we get the existence. The uniqueness of $\delta$ is because $E$ is unique up to a root of unity. 
\end{proof}

\begin{definition}\label{def:Ecirc}
Let $E^\circ$ be the unique polynomial among the principal polynomials in $\mathcal{E}(Q)$ that contains a term $1x^{m/a\delta}y^{n/a\delta}$. Then $N^0(E^\circ - x^{m/a\delta}y^{n/a\delta})\subsetneq N^0(E^\circ)=\frac{1}{\delta a} T_{m,n}$, and
each of the other principal polynomials contain a term $\omega  x^{m/a\delta}y^{n/a\delta}$ for a root of unity $\omega\neq1$. 
We call  $E^\circ$ the $F$-generator. 
\end{definition}

\begin{lemma}
Let $\ff$ satisfy the conditions {\rm(1)--(3)} in Conjecture \ref{Jac_conj2}. Then the set
$$\mathbb{T}_F:=\{\alpha\in \mathbb{T} \, :\, \emph{supp}(\ff - \g(E^\circ))\subseteq N^0(\ff)\setminus \mathcal{N}''\}$$
 is nonempty.
 \end{lemma}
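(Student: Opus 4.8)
The plan is to exhibit an explicit element of $\mathbb{T}_F$ manufactured from the polynomial $Q$ of Lemma~\ref{lem:c=p^2 most generalized} and the $F$-generator $E^\circ$ of Definition~\ref{def:Ecirc}. Since $E^\circ$ is a principal polynomial lying in $\mathcal{E}(Q)$, the very definition of $\mathcal{E}(Q)$ provides a Tschirnhausen polynomial $\alpha_0\in\mathbb{T}$ with $Q=\alpha_0(E^\circ)$; moreover $\deg\alpha_0=\delta$. So the whole point will be to turn this factorization of $Q$ into a factorization of (the relevant part of) $F$.

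The candidate is the polynomial $\alpha\in\mathbb{C}[z]$ defined by $\alpha(z)=\bigl(\alpha_0(z)\bigr)^{a}$. First I would check $\alpha\in\mathbb{T}$: it is monic of degree $a\delta$, and since the multiset of roots of $\alpha$ is that of $\alpha_0$ with every multiplicity multiplied by $a$, the sum of the roots of $\alpha$ equals $a$ times the sum of the roots of $\alpha_0$, hence $0$ because $\alpha_0\in\mathbb{T}$; so the coefficient of $z^{a\delta-1}$ in $\alpha$ vanishes, which is exactly the membership condition for $\mathbb{T}$. Then $\alpha(E^\circ)=\bigl(\alpha_0(E^\circ)\bigr)^{a}=Q^{a}$, so $\supp\bigl(F-\alpha(E^\circ)\bigr)=\supp(F-Q^{a})$, and Lemma~\ref{lem:c=p^2 most generalized}(i) states precisely that this support is contained in $N^0(F)\setminus\mathcal{N}''$ (and $N(F)\setminus\mathcal{N}''\subseteq N^0(F)\setminus\mathcal{N}''$, so either formulation suffices). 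Therefore $\alpha\in\mathbb{T}_F$, and the set is nonempty.

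I do not expect any real obstacle here; the argument is essentially bookkeeping, and the only point meriting a moment's care is that the Tschirnhausen normalization ($e_{k-1}=0$) is preserved under the substitution $z\mapsto z^{a}$, i.e.\ that $\alpha_0^{\,a}\in\mathbb{T}$. Besides the Vieta argument above, one can see this by direct expansion: writing $\alpha_0(z)=z^{\delta}+c_{\delta-2}z^{\delta-2}+\cdots+c_0$, a monomial of total degree $a\delta-1$ in the product $\alpha_0(z)^{a}$ would require, among the $a$ chosen factor-monomials, at least one of degree $<\delta$, hence of degree $\le\delta-2$, forcing the remaining degrees to sum to more than $\delta(a-1)$, which is impossible; so no such monomial occurs and $\alpha_0^{\,a}\in\mathbb{T}$.
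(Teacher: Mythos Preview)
Your proof is correct and follows essentially the same approach as the paper: both take the Tschirnhausen polynomial $\alpha_0\in\mathbb{T}$ with $Q=\alpha_0(E^\circ)$, set $\alpha=\alpha_0^{\,a}$, and observe that $\alpha(E^\circ)=Q^a$ so that Lemma~\ref{lem:c=p^2 most generalized}(i) gives the desired support condition. Your verification that $\alpha_0^{\,a}\in\mathbb{T}$ (via Vieta or direct expansion) is more explicit than the paper's, which simply writes out the first few terms of the expansion.
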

 \begin{proof}
  Since $E^\circ$ is a principal polynomial in $\mathcal{E}(Q)$, we can write 
  \begin{equation}\label{eq:QE}
  Q=\g_Q(E^\circ)=(E^\circ)^\delta+e_{\delta-2}(E^\circ)^{\delta-2}+e_{\delta-3}(E^\circ)^{\delta-3}+\cdots+e_0.
  \end{equation}
 Denote $\g_{Q}^a=(\g_Q)^a$. Then $Q^a=\g_{Q}^a(E^\circ)=(E^\circ)^{a\delta}+ae_{\delta-2}(E^\circ)^{a\delta-2}+\cdots+e_0^a$ . Then $\text{supp}(\ff - \g_Q^a(E^\circ))=
\text{supp}(\ff - Q^a)\subseteq N^0(\ff)\setminus \mathcal{N}''$ by Lemma \ref{lem:c=p^2 most generalized}. This gives a polynomial $\g_Q^a\in \mathbb{T}_F$. 
\end{proof}

\begin{definition}\label{def:VF}
Let $$\mathbb{V}_F=\{N^0(\ff - \g(E^\circ)) \subseteq \mathbb{R}^2\, :\, \g\in \mathbb{T}_F\}.$$ 
Note that $\mathbb{V}_F$ is a nonempty set of convex polygons (where $\mathbb{V}_F$ may contain the empty polygon $\emptyset$) contained in $N^0(\ff)$ with integral vertex coordinates, so it is a finite set. We view $\mathbb{V}_F$ as a poset with the partial order given by inclusion, and define the following:

(i) Fix a minimal element  $V^\circ\in \mathbb{V}_F$, in other words, if $V$ is an element of $\mathbb{V}_F$ with $V\subseteq V^\circ$, then $V= V^\circ$. 

(ii) Fix $\g^\circ\in \mathbb{T}_F$ such that $V^\circ=N^0(\ff - \g^\circ(E^\circ))$. 

(iii) Let $\ff^\circ=\ff - \g^\circ(E^\circ)$. This $\ff^\circ$ will be called the \emph{remainder}.
\end{definition}
\begin{lemma}\label{Ecirctopdegpieces}
 Let $\ff$ satisfy the conditions {\rm(1)--(3)} in Conjecture \ref{Jac_conj2}, and let $E^\circ$ be the $F$-generator. 
 Assume $\alpha\in\mathbb{T}_F$ satisfy $V^\circ=N^0(\ff - \g(E^\circ))$. Then
 
{\rm(i)}\ $(E^\circ)^{(0,1)}_{n/a\delta}=(x+1)^{m/a\delta}y^{n/a\delta}\quad \text{and}\quad (E^\circ)^{(1,1)}_{(m+n)/a\delta} = x^{m/a\delta}(x+y)^{n/a\delta}.$

{\rm(ii)}\  $\deg(\alpha)=a\delta$.
\end{lemma}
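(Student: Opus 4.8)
The plan is to extract the statements from the construction of $E^\circ$ and from the minimality of $V^\circ$. For part (i), recall from Lemma~\ref{lem:c=p^2 most generalized}(ii) that $Q^{(0,1)}_{n/a}=(x+1)^{m/a}y^{n/a}$ and $Q^{(1,1)}_{(m+n)/a}=x^{m/a}(x+y)^{n/a}$. Since $E^\circ$ is a principal polynomial in $\mathcal{E}(Q)$ containing the term $x^{m/a\delta}y^{n/a\delta}$, we have $Q=\g_Q(E^\circ)=(E^\circ)^\delta+e_{\delta-2}(E^\circ)^{\delta-2}+\cdots+e_0$ as in \eqref{eq:QE}. I would argue that the $w$-leading form of $Q$ equals the $\delta$-th power of the $w$-leading form of $E^\circ$ for $w=(0,1)$ and $w=(1,1)$: indeed the lower-order terms $(E^\circ)^i$ with $i<\delta$ have strictly smaller $w$-degree than $(E^\circ)^\delta$ (because $N^0(E^\circ)$ has a nontrivial vertex, so $w\text{-}\deg(E^\circ)>0$), hence cannot cancel or contribute to the top piece. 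Therefore $\big((E^\circ)^{(0,1)}_{n/a\delta}\big)^\delta=(x+1)^{m/a}y^{n/a}$ and $\big((E^\circ)^{(1,1)}_{(m+n)/a\delta}\big)^\delta=x^{m/a}(x+y)^{n/a}$. Taking $\delta$-th roots in the UFD $\mathcal{R}$ (up to a scalar, which is pinned down by the normalization that $E^\circ$ contains $1\cdot x^{m/a\delta}y^{n/a\delta}$) gives $(E^\circ)^{(0,1)}_{n/a\delta}=(x+1)^{m/a\delta}y^{n/a\delta}$ and $(E^\circ)^{(1,1)}_{(m+n)/a\delta}=x^{m/a\delta}(x+y)^{n/a\delta}$.

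For part (ii), I would argue that $\deg(\alpha)=a\delta$ by sandwiching. On one hand, the map $\g_Q^a=(\g_Q)^a\in\mathbb{T}_F$ has degree $a\delta$ and $N^0(\ff-\g_Q^a(E^\circ))=N^0(\ff-Q^a)\subseteq N^0(\ff)\setminus\mathcal{N}''$; since $V^\circ$ is minimal in $\mathbb{V}_F$, any $\alpha\in\mathbb{T}_F$ realizing $V^\circ$ must satisfy $V^\circ\subseteq N^0(\ff-\g_Q^a(E^\circ))$, so in particular $V^\circ\cap\mathcal{N}''=\emptyset$, i.e. $\supp(\ff-\alpha(E^\circ))\subseteq N^0(\ff)\setminus\mathcal{N}''$. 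Now compare the $w$-leading forms of $\alpha(E^\circ)$ and $\ff$ for the two directions. If $\deg(\alpha)=k$, then $\alpha(E^\circ)=(E^\circ)^k+e_{k-2}(E^\circ)^{k-2}+\cdots$, so its $(1,1)$-leading form is $\big((E^\circ)^{(1,1)}_{(m+n)/a\delta}\big)^k=x^{km/a\delta}(x+y)^{kn/a\delta}$, which is supported on the segment from $(km/a\delta,\,kn/a\delta)$ to $(km/a\delta+kn/a\delta,\,0)$. Since $\supp(\ff-\alpha(E^\circ))$ avoids $\mathcal{N}''$ — and in particular avoids the point $C=(m,n)$, the unique highest lattice point of $N^0(\ff)$ in the generic direction $(r_1,r_2)$ used in Lemma~\ref{lem:c=p^2 most generalized} — the top vertex $(km/a\delta,kn/a\delta)$ of $N^0(\alpha(E^\circ))$ must coincide with $C=(m,n)$, forcing $k/a\delta=1$, i.e. $k=a\delta$.

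The main obstacle, I expect, is making the last step rigorous: one must show that the leading vertex of $N^0(\alpha(E^\circ))$ really sits at $C$, not merely that it lies outside $\mathcal{N}''$. The clean way is to observe that $\supp(\ff-\alpha(E^\circ))\subseteq N^0(\ff)\setminus\mathcal{N}''$ together with $\supp(\ff)\subseteq T_{m,n}$ forces $\supp(\alpha(E^\circ))$ to contain the point $C$ (since $\lambda_C\neq 0$ by condition (1), and $C\in\mathcal{N}''$ so the coefficient of $x^C$ in $\ff-\alpha(E^\circ)$ vanishes), while $\supp(\alpha(E^\circ))\subseteq N^0(\ff)$ as well (again because the symmetric difference lies in $N^0(\ff)$ and $\supp(\ff)\subseteq N^0(\ff)$). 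Thus $C$ is a vertex of $N^0(\alpha(E^\circ))$ maximizing the linear functional $r_1x+r_2y$ over $N^0(\ff)$, and on the other hand $N^0(\alpha(E^\circ))=k\,N^0(E^\circ)=\frac{k}{a\delta}T_{m,n}$ has its unique such maximizing vertex at $\frac{k}{a\delta}C$; equating gives $k=a\delta$. Part (i) then also confirms consistency, since the $(1,1)$-degree of $\alpha(E^\circ)$ computed two ways agrees. Everything else is routine bookkeeping with $w$-gradings and the fact that $\mathcal{R}$ is a UFD.
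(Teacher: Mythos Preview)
Your proof of part (i) matches the paper's exactly: take the $w$-leading form of the relation $Q=\g_Q(E^\circ)$, extract a $\delta$-th root in the UFD $\mathcal{R}$, and fix the root-of-unity ambiguity by the normalization of $E^\circ$.

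For part (ii) your argument also works and is essentially the same idea as the paper's, but you have one logical slip and one unnecessary detour. The slip: from ``$V^\circ$ is minimal in $\mathbb{V}_F$'' you conclude $V^\circ\subseteq N^0(F-\g_Q^a(E^\circ))$. That is false in general---minimal does not mean minimum, and $\mathbb{V}_F$ is only partially ordered. Fortunately you do not need this at all: the conclusion you actually use, $\supp(F-\alpha(E^\circ))\subseteq N^0(F)\setminus\mathcal{N}''$, is just the \emph{definition} of $\alpha\in\mathbb{T}_F$, which is already a hypothesis. So minimality of $V^\circ$ plays no role in (ii), and the paper's proof makes this clear. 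Once you drop that detour, your ``obstacle'' paragraph is the real argument and is correct: from $\alpha\in\mathbb{T}_F$ you get $C\in\supp(\alpha(E^\circ))\subseteq T_{m,n}$, and since $N^0(\alpha(E^\circ))=\tfrac{k}{a\delta}T_{m,n}$ has its unique $(r_1,r_2)$-maximizing vertex at $\tfrac{k}{a\delta}C$, you conclude $k=a\delta$. The paper reaches the same conclusion by a direct case split on $k>a\delta$ versus $k<a\delta$, showing in each case that $N^0(F-\alpha(E^\circ))$ meets $\mathcal{N}''$; your vertex-matching argument is an equivalent repackaging.
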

\begin{proof}
(i) By \eqref{eq:QE}, $\big((E^\circ)^{(0,1)}_{n/a\delta}\big)^\delta=Q^{(0,1)}_{n/a}=(x+1)^{m/a}y^{n/a}$, so
$(E^\circ)^{(0,1)}_{n/a\delta}$ equals a $k$-th root of unity multiplying by $(x+1)^{m/a\delta}y^{n/a\delta}$. Since $E^\circ$ contains a term $1x^{m/a\delta}y^{n/a\delta}$ by definition, we must have $(E^\circ)^{(0,1)}_{n/a\delta}=(x+1)^{m/a\delta}y^{n/a\delta}$.  The other equality is proved similarly. 

(ii) Denote $k=\deg(\alpha)$, and note that $N^0(F)=T_{m,n}$ and $N^0(\g(E^\circ))=kN^0(E^\circ)=kT_{\frac{m}{a\delta},\frac{n}{a\delta}}=T_{\frac{km}{a\delta},\frac{kn}{a\delta}}$.  If $k>a\delta$, then vertices of $T_{\frac{km}{a\delta},\frac{kn}{a\delta}}$ that are not the origin lie outside $T_{m,n}$, thus $N^0(F^\circ)=T_{\frac{km}{a\delta},\frac{kn}{a\delta}}\supseteq N^0(F)$; if $k<a\delta$, then $N^0(F^\circ)=T_{m,n}=N^0(F)$. In both cases, $supp(F^\circ)\not\subseteq N^0(F)\setminus \mathcal{N}''$. So we must have $k=a\delta$. 
\end{proof}


We shall show that the following Conjecture \ref{Jac_conj3} implies Conjecture~\ref{Jac_conj2} (see Lemma~\ref{CtoB}),  hence implies the Jacobian conjecture.
\begin{conjecturealpha}\label{Jac_conj3}
Assume that $\ff$ and $\GG$ satisfy the conditions {\rm(1)--(5)} in Conjecture \ref{Jac_conj2}. Then any of the following equivalent statements is true.

{\rm(i)} $\emptyset\in \mathbb{S}_F$. 

{\rm(ii)} There is $\g\in \mathbb{T}$ such that $\ff - \g(E^\circ)=0$.

{\rm(iii)} $V^\circ=\emptyset$.

{\rm(iv)} $\ff^\circ=0$.  \end{conjecturealpha}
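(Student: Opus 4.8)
We first dispose of the equivalences. Statements (ii) and (iv) are the same assertion unwound through the definition of $\ff^\circ$; (iii)$\Leftrightarrow$(iv) because $V^\circ=N^0(\ff^\circ)$ and the augmented Newton polygon of a polynomial is empty exactly when the polynomial is zero; and (i)$\Leftrightarrow$(iv) because $\emptyset$, if it belongs to $\mathbb{V}_F$, is necessarily its unique minimum, so $\emptyset\in\mathbb{V}_F$ is equivalent to $V^\circ=\emptyset$. Hence the whole substance is the remainder vanishing assertion $\ff^\circ=0$. (It is worth recording that once this is proved, $\ff=\g^\circ(E^\circ)$ with $\deg\g^\circ=a\delta\ge2$ by Lemma~\ref{Ecirctopdegpieces}(ii), so Lemma~\ref{CtoB} gives $[\ff,\GG]=0$ and thereby Conjecture~\ref{Jac_conj2}; thus Conjecture~\ref{Jac_conj3} is precisely the statement that the construction of $E^\circ,\g^\circ$ produces an exact factorization of $\ff$.) The plan below concerns the assertion $\ff^\circ=0$.

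I would argue by contradiction, assuming $\ff^\circ\ne0$, i.e.\ $V^\circ\ne\emptyset$. The first, softer step strips the outer layers off $\ff^\circ$. Write $\ff=\g^\circ(E^\circ)+\ff^\circ$ and expand $\g^\circ(E^\circ)=(E^\circ)^{a\delta}+\cdots$; by Lemma~\ref{Ecirctopdegpieces}(i) the $(1,1)$- and $(0,1)$-leading forms of $(E^\circ)^{a\delta}$ are $x^m(x+y)^n$ and $(x+1)^m y^n$, which by condition~(1) agree with $\ff^{(1,1)}_{m+n}$ and $\ff^{(0,1)}_{n}$, so $(\ff^\circ)^{(1,1)}_{m+n}=(\ff^\circ)^{(0,1)}_{n}=0$. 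Continuing downward one layer at a time --- using conditions~(2) and~(3) for the divisibility of the next components of $\ff$ by powers of $x+1$ and $x+y$, the divisibility of the corresponding components of $(E^\circ)^{a\delta}$ inherited from the leading binomial forms of $E^\circ$, and the fact that $\supp(\ff^\circ)\subseteq N^0(\ff)\setminus\mathcal{N}''$ bounds the lengths of the homogeneous components of $\ff^\circ$ --- one peels $\ff^\circ$ via Lemma~\ref{elem_zero}, as long as the divisibility power stays ahead of the length. The outcome is that $\supp(\ff^\circ)$ is confined to a strictly interior region of $T_{m,n}$ where this bookkeeping stalls (roughly, the region where $N^0(\ff)\setminus\mathcal{N}''$ is too wide for the available powers of $x+1$ and $x+y$).

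The second, harder step must bring in $[\ff,\GG]\in\mathbb{C}$, and this is where the generalized Magnus formula of the first paper in this series is meant to do the work. Applied to $(\ff,\GG)$ --- legitimate by conditions~(1), (4), (5) and the similarity $N^0(\GG)=\tfrac{b}{a}N^0(\ff)$ --- it should express the homogeneous components of $\GG$ explicitly in terms of those of $\ff$, and in particular produce a presentation $\GG=\beta(E^\circ)+\GG^\circ$ with $\beta\in\mathbb{T}$ of degree $b\delta$ and $\supp(\GG^\circ)$ confined to the corresponding interior region of $N^0(\GG)$; the common generator is forced to be $E^\circ$ (up to a root of unity) by the uniqueness in Lemma~\ref{unique_principal}. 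Then, since $[\g^\circ(E^\circ),\beta(E^\circ)]=0$,
$$[\ff,\GG]=(\g^\circ)'(E^\circ)\,[E^\circ,\GG^\circ]+\beta'(E^\circ)\,[\ff^\circ,E^\circ]+[\ff^\circ,\GG^\circ],$$
and the plan is to track the $(1,1)$-leading forms of the three surviving brackets and show that a nonzero $\ff^\circ$ --- whose leading form is divisible by a controlled power of $x+y$ by the first step --- produces a term of strictly positive $(1,1)$-degree in $[\ff,\GG]$ that cannot be cancelled, contradicting $[\ff,\GG]\in\mathbb{C}$. Here the explicit leading forms $x^{m/a\delta}(x+y)^{n/a\delta}$ and $(x+1)^{m/a\delta}y^{n/a\delta}$ of $E^\circ$, together with the divisibility of the components of $\ff^\circ$ and $\GG^\circ$, make Lemma~\ref{elem_zero} the instrument ruling out cancellation, and this is exactly where the hypotheses $\gcd(a,b)=1$ and $2\le a<b$ must enter, to preclude numerically accidental coincidences of degree.

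The main obstacle is precisely this last step: propagating the divisibility far enough into the interior of $T_{m,n}$ that Lemma~\ref{elem_zero} still bites. The first step controls only the layers adjacent to the two slanted edges of $T_{m,n}$, whereas a nonzero remainder can a priori be supported deep in the interior, where the available powers of $x+1$ or $x+y$ are small and Lemma~\ref{elem_zero} alone is powerless; only the Jacobian identity reaches that far, and converting it into uniform divisibility estimates --- in effect, showing that the error term in the generalized Magnus formula is exactly $\ff^\circ$ and is forced to vanish --- is the substance of the remainder vanishing conjecture, which is why it is deferred to the forthcoming paper(s). A secondary subtlety is to verify that the generator extracted from $\GG$ is $E^\circ$ (up to a root of unity) rather than a root of unity times some other principal polynomial, which again rests on Lemma~\ref{unique_principal} and the similarity of the Newton polygons.
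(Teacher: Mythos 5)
There is a genuine gap, and it is the one you yourself concede at the end: the statement you were asked to prove is Conjecture~\ref{Jac_conj3}, which is an open conjecture in this paper, and your proposal does not prove it. Your treatment of the equivalences (i)--(iv) is fine and matches what the paper dismisses as clear at the start of Lemma~\ref{CtoB}, and your observation that the substance is $\ff^\circ=0$, after which Lemma~\ref{Ecirctopdegpieces}(ii) and the argument of Lemma~\ref{CtoB} yield $[\ff,\GG]=0$, is exactly the paper's reduction. But everything after that is a programme, not an argument. Your ``softer step'' cannot, on its own, confine $\supp(\ff^\circ)$ to a small interior region: conditions (1)--(3) and Lemma~\ref{elem_zero} only control the layers where the divisibility exponent exceeds the length of the relevant homogeneous component, which is precisely why the paper organizes the descent through the $i$-th collections of divisibility and support conditions (Definition~\ref{def:ith divisibility}, Lemmas~\ref{Div_to_limit1}, \ref{Div_to_limit2}, \ref{multiple_adelta}, and the induction in Lemma~\ref{DtoC}), and why every inductive step there is itself conjectural (Conjecture~\ref{Jac_conj4}), ultimately resting on the remainder vanishing conjecture (Conjecture~\ref{Jac_conj5}) via the generalized Magnus formula (Theorem~\ref{prop:Magnus_formula_equivalence}). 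Your ``harder step'' likewise asserts, without proof, a presentation $\GG=\beta(E^\circ)+\GG^\circ$ with $\beta\in\mathbb{T}$ of degree $b\delta$ and controlled support, and then a non-cancellation claim for the leading forms of the three brackets; neither is established, and the non-cancellation claim is essentially a restatement of the difficulty rather than a resolution of it.

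So the honest comparison is this: the paper does not prove Conjecture~\ref{Jac_conj3} either --- it proves the chain of implications Conjecture~\ref{Jac_conj5} $\Rightarrow$ Conjecture~\ref{Jac_conj4} $\Rightarrow$ Conjecture~\ref{Jac_conj3} $\Rightarrow$ Conjecture~\ref{Jac_conj2} $\Rightarrow$ Conjecture~\ref{Jac_conj} and defers the remainder vanishing conjecture to the sequels. Your sketch runs roughly parallel to that architecture (peeling via divisibility and Lemma~\ref{elem_zero}, then invoking $[\ff,\GG]\in\mathbb{C}$ through the Magnus formula), but it skips the intermediate scaffolding that makes the descent precise and leaves the decisive step --- converting the Jacobian identity into divisibility of the remainder deep inside $T_{m,n}$ --- as an acknowledged hope. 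As a proof of the statement it therefore fails; as a plan it is a coarser version of what the paper already sets up in Sections 3--5.
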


\begin{remark}
One may compare Conjecture~\ref{Jac_conj3} with Conjecture~\ref{GGV_conj}, which can be obtained from \cite{GGV14}. Although the Laurent polynomial setting from \cite{GGV14} may also lead to the same conjectures we are aiming at, we prefer to stay only in the polynomial ring $\mathcal{R}$ and to precisely control the principal $F$-generator $E^\circ$ and the remainder $F^\circ$. 
\end{remark}

The following definition is inspired from divisibility and support conditions for greedy basis elements of rank 2 cluster algebras (\cite[Theorem 2.2]{LLS},\cite[Theorem 4]{LLRZ}).

\begin{definition}\label{def:ith divisibility}
Let $i\in \mathbb{Z}_{\ge 0}$.  Let $\epell=\frac{1}{m(n-m)}$. 
We say that $\ff^\circ$ satisfies the $i$-th collection of $(0,1)$-divisibility conditions (for short, $(0,1)$-DC) if 
\begin{center} $(F^\circ)^{(0,1)}_{j}$ is divisible by $(x+1)^{j-\lfloor i (n-m) \epell\rfloor}$ for each $j\in\mathbb{Z}$.
\end{center}

\noindent We say that $\ff^\circ$ satisfies the $i$-th collection of $(1,1)$-divisibility conditions (for short, $(1,1)$-DC) if 
\begin{center} $(F^\circ)^{(1,1)}_{j}$ is divisible by $(x+y)^{j-\lfloor (i+1) m \epell\rfloor}$ for each $j\in\mathbb{Z}$.
\end{center}

\noindent We say that $\ff^\circ$ satisfies the $i$-th collection of divisibility conditions (DC) if it satisfies the $i$-th collection of $w$-divisibility conditions for $w\in\{(0,1),(1,1)\}$. 

\noindent We say that $\ff^\circ$ satisfies the $i$-th collection of divisibility and support conditions (DSC) if it satisfies the $i$-th collection of divisibility conditions and $$\text{supp}(F^\circ)\subseteq T_{\lfloor (i+1) m \epell\rfloor, \lfloor (i+1) m \epell \rfloor + \lfloor i (n-m) \epell\rfloor}.$$ 
\end{definition}

\begin{example}
A matrix presentation of a polynomial in $\mathcal{R}$ may help understand divisibility conditions better. For $f\in\mathcal{R}$, let $m'=(0,1)\text{-deg}(f)$ and $n'=(1,0)\text{-deg}(f)$. Let $M(f)$ be the $m'\times n'$ matrix whose $(i,j)$-entry is equal to the coefficient for the term $x^{j-1}y^{m'-i+1}$ of $f$. For instance, if 
$$f=3x^6y + 2x^5y^2 + x^3y^4 + 2x^4y^2 + 3x^2y^4 + 3xy^4 + y^4 + xy^2 + y^2 +1,$$then 
 $$M(f)=\begin{pmatrix}
1 & 3 & 3 & 1 & 0 & 0 & 0\\
 0 & 0 & 0 & 0 & 0 & 0 & 0\\
 1 & 1 & 0 & 0 & 2 & 2 & 0\\
 0 & 0 & 0 & 0 & 0 & 0 & 3\\
 1 & 0 & 0 & 0 & 0 & 0 & 0\\
 \end{pmatrix}.$$ 
Using $M(f)$, it is easy to see that  $f^{(0,1)}_{j}$ is divisible by $(x+1)^{j-1}$  and $f^{(1,1)}_{j}$ is divisible by $(x+y)^{j-6}$ for each $j\in\mathbb{Z}$.
 
\end{example}

For any integers $r_1$ and $r_2$, the set $\{x\in \mathbb{Z} \ : \ r_1\le x\le r_2\}$ will be denoted by the usual notation $[r_1,r_2]$. 
 We shall show the following Conjecture~\ref{Jac_conj4} implies Conjecture~\ref{Jac_conj3} (see Lemma~\ref{DtoC}), hence implies the Jacobian conjecture.

\begin{conjecturealpha}\label{Jac_conj4}
Assume that $\ff$ and $\GG$ satisfy the conditions {\rm(1)--(5)} in Conjecture \ref{Jac_conj2}. Then the following statements hold true.

{\rm(i)} Let $w=(0,1)$ and $i\in [m(n-m)(a-1)/a+1, m(n-m)]$. If  $\ff^\circ$ satisfies the $i$-th collection of $w$-DC
, then $\ff^\circ$ satisfies the $(i-1)$-th collection of $w$-DC.

{\rm(ii)} Let $w=(1,1)$ and $i\in [m(n-m)(a-1)/a, m(n-m)]$. If $\ff^\circ$ satisfies the $i$-th collection of $w$-DC
, then $\ff^\circ$ satisfies the $(i-1)$-th collection of $w$-DC.

{\rm(iii)} Let $i\in [1,m(n-m)(a-1)/a]$. If $\ff^\circ$ satisfies the $i$-th collection of DSC, then $\ff^\circ$ satisfies the $(i-1)$-th collection of DC.
 \end{conjecturealpha}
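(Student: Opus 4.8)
The plan is to treat the three parts as increasingly refined instances of the same engine: "divisibility by a high power of a binomial, combined with a length bound coming from the Newton polygon, forces vanishing" — i.e. repeated application of Lemma~\ref{elem_zero} to the $w$-homogeneous pieces of $[\ff,\GG]$ (or of auxiliary polynomials built from $\ff^\circ$, $Q$, $E^\circ$), using that $[\ff,\GG]\in\mathbb{C}$ is $w$-homogeneous of $w$-degree $0$ for every $w\in W$. The role of the generalized Magnus' formula (from the first paper of the series) will be to express a top $w$-homogeneous piece of a relevant polynomial as an explicit sum whose terms are divisible by prescribed powers of $x+1$ (for $w=(0,1)$) or $x+y$ (for $w=(1,1)$); the inductive hypothesis that $\ff^\circ$ satisfies the $i$-th collection of $w$-DC feeds exactly the divisibility needed to run the formula one step down, producing the $(i-1)$-th collection.

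First I would set up notation carefully: fix $w\in\{(0,1),(1,1)\}$, write $\ff=\gamma^\circ(E^\circ)+\ff^\circ$, and recall from Lemma~\ref{Ecirctopdegpieces}(i) that the top $w$-pieces of $E^\circ$ are the pure binomial powers $(x+1)^{m/a\delta}y^{n/a\delta}$ resp.\ $x^{m/a\delta}(x+y)^{n/a\delta}$, so that $\gamma^\circ(E^\circ)$ has extremely rigid top $w$-graded structure. For part (i), with $w=(0,1)$ and $i$ in the stated high range $[m(n-m)(a-1)/a+1,\,m(n-m)]$, I would extract from $[\ff,\GG]\in\mathbb{C}$ — equivalently from $[\ff^\circ+\gamma^\circ(E^\circ),\GG]$ being a constant — the $w$-homogeneous component of $w$-degree $j$ for each relevant $j$, isolate the contribution of $(F^\circ)^{(0,1)}_{j'}$ for the appropriate $j'$, and show that the remaining terms are all divisible by $(x+1)^{(j-\lfloor (i-1)(n-m)\epell\rfloor)}$ thanks to the $i$-th $(0,1)$-DC hypothesis and the known divisibilities of the $E^\circ$-part. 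Then a length count — $\mathrm{len}$ of each homogeneous piece is bounded by the width of $T_{m,n}$ at the relevant height — combined with Lemma~\ref{elem_zero}(1) upgrades the divisibility of $(F^\circ)^{(0,1)}_{j'}$ from exponent $j'-\lfloor i(n-m)\epell\rfloor$ to $j'-\lfloor (i-1)(n-m)\epell\rfloor$, which is exactly the $(i-1)$-th collection. Part (ii) is the mirror image with $w=(1,1)$, $x+1\rightsquigarrow x+y$, $n-m \rightsquigarrow m$, and Lemma~\ref{elem_zero}(2); the shifted index range $[m(n-m)(a-1)/a,\,m(n-m)]$ reflects the different floor function $\lfloor (i+1)m\epell\rfloor$ appearing in the $(1,1)$-DC.

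Part (iii) is the delicate one and I expect it to be the main obstacle. Here $i$ lies in the low range $[1,\,m(n-m)(a-1)/a]$, the hypothesis is the stronger DSC (divisibility and support), and the conclusion is only DC (no support claim), so the mechanism must be different: in this regime the relevant homogeneous pieces are small enough that the support constraint $\mathrm{supp}(F^\circ)\subseteq T_{\lfloor(i+1)m\epell\rfloor,\,\lfloor(i+1)m\epell\rfloor+\lfloor i(n-m)\epell\rfloor}$ is what keeps the Newton polygon of $\ff^\circ$ inside $\mathcal{N}''$-complement territory where the generalized Magnus' formula and Lemma~\ref{lem:c=p^2 most generalized}(i) are applicable, and the descent from $i$ to $i-1$ changes which pieces "see" the binomial. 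Concretely I would: (a) use the DSC support bound to identify the finitely many $w$-pieces of $[\ff,\GG]$ that can be nonzero in the relevant degree window; (b) apply the generalized Magnus' formula to the pair $(\ff,\GG)$, or rather to $E^\circ$ and its companion, to rewrite the top piece as $\sum$ of terms each manifestly divisible by the required power; (c) invoke Lemma~\ref{elem_zero} to kill the error term and read off the $(i-1)$-th DC. The technical risk is bookkeeping with the floor functions $\lfloor i(n-m)\epell\rfloor$ and $\lfloor(i+1)m\epell\rfloor$ — showing that at each step the exponent drops by at most what the length bound allows, with the constant $\epell=\frac1{m(n-m)}$ chosen precisely so that one unit of $i$ corresponds to one lattice step — and verifying that the endpoint cases of the three index ranges match up so that (i)+(ii)+(iii) chain together from $i=m(n-m)$ down to $i=0$, which is what Lemma~\ref{DtoC} will need to conclude $V^\circ=\emptyset$.
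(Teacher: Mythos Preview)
The statement you are trying to prove is \emph{Conjecture~\ref{Jac_conj4}}, and it is genuinely a conjecture in this paper: there is no proof of it here. What the paper does instead is reduce it to a further conjecture. Section~5 (the final unnumbered Lemma) shows that Conjecture~\ref{Jac_conj5} (the remainder vanishing conjecture) implies Conjecture~\ref{Jac_conj4}, and the authors explicitly defer the proof of Conjecture~\ref{Jac_conj5} to forthcoming work. So your proposal is not to be compared against a proof in the paper --- there is none --- but against the paper's reduction strategy.

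Your sketch correctly identifies the two ingredients the paper uses throughout: Lemma~\ref{elem_zero} (divisibility plus length bound forces vanishing) and the generalized Magnus' formula (Theorem~\ref{prop:Magnus_formula_equivalence}). However, the place where your outline breaks down is precisely the step you flag as ``the main obstacle'' and then wave through. In your step~(b) for part~(iii) you write ``apply the generalized Magnus' formula \ldots\ to rewrite the top piece as $\sum$ of terms each manifestly divisible by the required power''. That divisibility is \emph{not} manifest. The Magnus formula \eqref{Magnus_formula_equivalence} expresses $G_{e-\mu}$ in terms of $[h(F)^{(e-\beta)/d}]_{t^{\mu-\beta}}$, and after substituting $F=\alpha^\circ(E^\circ)+F^\circ$ and expanding via \eqref{eq:xA}, one obtains the objects $\tilde{G}_{B,e-\mu,k}$ of \eqref{mathfrakversion2}. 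The assertion that these, after applying $S$, are killed by $P_k$ (i.e.\ divisible by $L^k$) is exactly the content of \eqref{soeq1} in Definition~\ref{supported} and the conclusion ``$P_1(x_\ell)=0$'' of Conjecture~\ref{Jac_conj5}. The paper isolates this as a separate, unproved conjecture precisely because the divisibility does \emph{not} fall out of the formula by inspection; one has to solve a coupled system in the unknowns $\tilde{c}_\beta$, $\tilde{x}_j$, $\tilde{\Gamma}_j$.

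Similarly, in parts~(i) and~(ii) your plan to ``isolate the contribution of $(F^\circ)^{(0,1)}_{j'}$'' and show the remaining terms are divisible by the next-higher power of $x+1$ is again not a bookkeeping exercise: the Magnus expansion mixes all lower-degree pieces of $F^\circ$ and $E^\circ$ nonlinearly, and controlling the resulting divisibility is the substance of Conjecture~\ref{Jac_conj5} in the ranges handled by steps~(I) and~(II) of the proof in Section~5. In short, your engine is the right one, but the crank you need to turn it --- the remainder vanishing statement --- is left open in this paper, and your proposal does not supply it.
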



For convenience, denote 
\begin{equation}\label{aibi}
a_i=\lfloor (i+1) m \epell \rfloor,\quad b_i=\lfloor (i+1) m \epell \rfloor + \lfloor i(n-m) \epell \rfloor
\end{equation} 
Note that 
$(a_{m(n-m)},b_{m(n-m)})=(m, n),\; (a_0,b_0)=(0,0)$, and
\begin{equation}\label{xiyi}
 (a_i,b_i)-(a_{i-1},b_{i-1})\in\{(0,0),(0,1),(1,1)\}.
\end{equation}
Note that the above difference cannot be $(1,2)$ is because of the following simple lemma.
\begin{lemma}\label{not_both}
There does not exist $i$ such that $ i/m\in \mathbb{Z}\text{ and }(i+1)/(n-m)\in \mathbb{Z}$.
There does not exist $i$ such that $\lfloor i(n-m) \epell  \rfloor >\lfloor (i-1)(n-m) \epell  \rfloor$ and $\lfloor (i+1) m \epell \rfloor> \lfloor i m \epell \rfloor$.
\end{lemma}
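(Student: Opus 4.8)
The plan is to prove the two non-existence statements in turn, observing that the second is in fact a direct consequence of the first together with elementary floor-function bookkeeping. Recall $\epell = \frac{1}{m(n-m)}$, so that $i(n-m)\epell = i/m$ and $(i+1)m\epell = (i+1)/(n-m)$.

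For the first statement, suppose toward a contradiction that some $i\in\mathbb{Z}$ satisfies both $m\mid i$ and $(n-m)\mid(i+1)$. Then $i \equiv 0 \pmod m$ and $i \equiv -1 \pmod{n-m}$. Write $d=\gcd(m,n-m)$. Since $d\mid m$ and $d\mid(n-m)$, from $m\mid i$ we get $d\mid i$, and from $(n-m)\mid(i+1)$ we get $d\mid(i+1)$; subtracting gives $d\mid 1$, so $d=1$. But recall that by hypothesis $(a,b,m,n)\in\mathcal{Q}$ — or more precisely that $F$ satisfies condition~(1) of Conjecture~\ref{Jac_conj2} — so $a\mid m$ and $a\mid n$, hence $a\mid(n-m)$; since $a\ge 2$, we have $\gcd(m,n-m)\ge a\ge 2 > 1$, contradicting $d=1$. (If one prefers a statement with no hypotheses on $a$, note that in all places where this lemma is invoked we are in the situation of Conjecture~\ref{Jac_conj2}, so $a\mid\gcd(m,n-m)$ with $a\ge 2$ is available; I would state the lemma under that standing assumption.)

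For the second statement, observe that $\lfloor i(n-m)\epell\rfloor > \lfloor (i-1)(n-m)\epell\rfloor$ means the value $i/m$ "crosses an integer" as the index passes from $i-1$ to $i$; since consecutive terms differ by exactly $(n-m)\epell = 1/m \le 1$, this crossing happens if and only if $i/m\in\mathbb{Z}$, i.e. $m\mid i$. Similarly $\lfloor (i+1)m\epell\rfloor > \lfloor i m\epell\rfloor$ holds if and only if $(i+1)/(n-m)\in\mathbb{Z}$, i.e. $(n-m)\mid (i+1)$. Thus the existence of such an $i$ is exactly the situation ruled out by the first statement, and we are done.

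The argument is entirely elementary; the only "obstacle" is a bookkeeping one, namely making sure the equivalence "floor strictly increases $\iff$ the quantity hits an integer" is applied correctly — this uses that the step size $1/m$ (resp.\ $1/(n-m)$) is at most $1$, so no integer can be skipped. I would include one line spelling this out: if $0 < t \le 1$ then $\lfloor (k)t\rfloor > \lfloor (k-1)t\rfloor$ iff $kt\in\mathbb{Z}$, because the interval $((k-1)t, kt]$ has length $t\le 1$ and hence contains an integer precisely when its right endpoint $kt$ is one.
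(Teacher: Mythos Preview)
Your proof is correct and follows the same approach as the paper's (much terser) argument: reduce the second statement to the first via the floor computation, and prove the first by observing that $a\ge 2$ divides both $m$ and $n-m$, hence cannot divide two consecutive integers.

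One small correction: the general claim in your final line --- that for any $0<t\le 1$ one has $\lfloor kt\rfloor>\lfloor(k-1)t\rfloor$ iff $kt\in\mathbb{Z}$ --- is false as stated (take $t=2/3$, $k=2$: then $\lfloor 4/3\rfloor=1>0=\lfloor 2/3\rfloor$ but $4/3\notin\mathbb{Z}$). What makes your argument work is the special form $t=1/m$ (resp.\ $t=1/(n-m)$) with $m$ a positive integer: an integer $N$ lies in $((k-1)/m,\,k/m]$ iff $mN\in(k-1,k]$ iff $mN=k$. Since your actual application only uses this case, the proof stands once the general statement is replaced by this specific one.
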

\begin{proof}
If both strict inequalities hold, then $m|i$ and $(n-m)|(i+1)$. But this is impossible since both $m$ and $n-m$ are multiples of $a\ge2$.
\end{proof}

\begin{lemma}\label{multiple_adelta}
Fix $i\in [0,m(n-m)(a-1)/a]$. Assume that 
\begin{equation}\label{multiple_of}a\delta\lfloor (i+1) m \epell\rfloor/m = a\delta \lfloor i (n-m) \epell\rfloor/(n-m) \in\mathbb{Z}_{\ge0}.\end{equation}
If $\ff^\circ$ satisfies the $i$-th collection of DSC, then  $$\emph{supp}(F^\circ)\subseteq 
T_{a_i,b_i-1}.$$ 
 \end{lemma}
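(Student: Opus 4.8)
The plan is to combine the support condition in the $i$-th collection of DSC, namely $\supp(F^\circ)\subseteq T_{a_i,b_i}$, with the divisibility conditions to rule out the top edge of the trapezoid $T_{a_i,b_i}$, thereby shrinking $b_i$ to $b_i-1$. The key observation is that the hypothesis \eqref{multiple_of} is exactly what is needed to feed the divisibility conditions through $E^\circ$ and its leading forms from Lemma~\ref{Ecirctopdegpieces}(i), so that Lemma~\ref{elem_zero} applies to force the vanishing of the relevant homogeneous pieces of $F^\circ$ lying on the top boundary of $T_{a_i,b_i}$.

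First I would unwind what the $i$-th collection of $w$-DC says for the two directions. For $w=(1,1)$: the piece $(F^\circ)^{(1,1)}_{j}$ is divisible by $(x+y)^{j-a_i}$ for all $j$ (using $a_i=\lfloor(i+1)m\epell\rfloor$). The relevant $j$ here is the maximal $(1,1)$-degree occurring in $T_{a_i,b_i}$, which is $a_i+b_i$; so $(F^\circ)^{(1,1)}_{a_i+b_i}$ is divisible by $(x+y)^{b_i}$. On the other hand, $\supp(F^\circ)\subseteq T_{a_i,b_i}$ forces $\len\big((F^\circ)^{(1,1)}_{a_i+b_i}\big)$ to be small: the lattice points of $T_{a_i,b_i}$ on the line $x+y=a_i+b_i$ form a segment whose length is controlled by how far $a_i$ and $b_i$ sit relative to the slanted edge. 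The point of hypothesis \eqref{multiple_of} is that it pins down $a_i/m = b_i'/(n-m)$-type proportions so that this segment length is strictly less than $b_i$ — equivalently, the vertex $(a_i,b_i)$ of $T_{a_i,b_i}$ lies strictly inside (or on a controlled portion of) the slanted edge $x = m+n-y$ scaled appropriately — and then Lemma~\ref{elem_zero}(2) gives $(F^\circ)^{(1,1)}_{a_i+b_i}=0$. Symmetrically, using the $(0,1)$-DC (divisibility of $(F^\circ)^{(0,1)}_{j}$ by $(x+1)^{j-?}$) together with the top horizontal edge $y=b_i$ of $T_{a_i,b_i}$, and again \eqref{multiple_of} to control $\len$, Lemma~\ref{elem_zero}(1) kills $(F^\circ)^{(0,1)}_{b_i}$. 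Removing the line $x+y=a_i+b_i$ and the line $y=b_i$ from $T_{a_i,b_i}$ leaves precisely $T_{a_i,b_i-1}$ (since the extreme corner $(a_i,b_i)$ is the unique lattice point on both lines beyond $T_{a_i,b_i-1}$, and the two edges together account for the entire part of $T_{a_i,b_i}$ with $y=b_i$ or with $x+y=a_i+b_i$), which is the claim. I would also invoke Lemma~\ref{not_both} to guarantee that the two edges being trimmed do not force an inconsistency, i.e. that exactly one new constraint is active in the relevant direction.

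The main obstacle I expect is the bookkeeping in the second paragraph: verifying that \eqref{multiple_of} translates into the strict inequality $\len < (\text{exponent of the binomial})$ needed to invoke Lemma~\ref{elem_zero}, separately for each of the two directions, and checking that the two resulting vanishings intersect the trapezoid exactly along its two top edges so that what remains is $T_{a_i,b_i-1}$ rather than something smaller or with a notch. This is where the precise relation $\epell = 1/(m(n-m))$ and the floor functions defining $a_i,b_i$ must be handled carefully — in particular one must check that on the line $y=b_i$ the support is confined to $0\le x\le a_i + (\text{something})$ with that something forcing $\len<$ the required power of $(x+1)$, and similarly on $x+y=a_i+b_i$; the constraint \eqref{multiple_of} is exactly the arithmetic condition under which both of these strict inequalities hold simultaneously. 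Everything else — extracting homogeneous pieces, passing divisibility through the decomposition — is routine given Lemmas~\ref{elem_zero} and~\ref{Ecirctopdegpieces}.
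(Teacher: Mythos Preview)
Your approach has a genuine gap: the direct application of Lemma~\ref{elem_zero} does \emph{not} yield the vanishing you want, because the length and the divisibility exponent come out equal, not with a strict inequality. Concretely, under \eqref{multiple_of} one has $a_i=\frac{mk}{a\delta}$ and $b_i=\frac{nk}{a\delta}$ for the integer $k$ in \eqref{multiple_of}. On the line $y=b_i$ inside $T_{a_i,b_i}$ the $x$--coordinate ranges over $[0,a_i]$, so $\mathrm{len}\big((F^\circ)^{(0,1)}_{b_i}\big)\le a_i$, while the $(0,1)$--DC gives divisibility only by $(x+1)^{a_i}$. Lemma~\ref{elem_zero}(1) needs $\mathrm{len}<a_i$, so all you can conclude is $(F^\circ)^{(0,1)}_{b_i}=\lambda\,(x+1)^{a_i}y^{b_i}$ for some $\lambda\in\mathbb{C}$. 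The same happens on the slanted edge: $\mathrm{len}\big((F^\circ)^{(1,1)}_{a_i+b_i}\big)\le b_i$ and the $(1,1)$--DC gives $(x+y)^{b_i}$, forcing $(F^\circ)^{(1,1)}_{a_i+b_i}=\lambda\,x^{a_i}(x+y)^{b_i}$ with the \emph{same} $\lambda$ (both read off the coefficient of $x^{a_i}y^{b_i}$). Your claim that \eqref{multiple_of} produces a strict inequality is simply false; the hypothesis does something different.

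What you are missing is the role of the minimality of $V^\circ$ in Definition~\ref{def:VF}. The real purpose of \eqref{multiple_of} is to guarantee that $(a_i,b_i)=\tfrac{k}{a\delta}(m,n)$ with $k\in\mathbb{Z}$, so that by Lemma~\ref{Ecirctopdegpieces}(i) the polynomial $\lambda(E^\circ)^k$ has top $(0,1)$--piece $\lambda(x+1)^{a_i}y^{b_i}$ and top $(1,1)$--piece $\lambda x^{a_i}(x+y)^{b_i}$, matching those of $F^\circ$ exactly. If $\lambda\neq 0$, set $\alpha'=\alpha^\circ+\lambda z^k$; the bound $i\le m(n-m)(a-1)/a$ ensures $k\le a\delta-1$, and one checks (with a boundary case handled via $\text{supp}(F^\circ)\cap\mathcal{N}''=\emptyset$) that $\alpha'\in\mathbb{T}_F$ while $N^0\big(F-\alpha'(E^\circ)\big)\subsetneq N^0(F^\circ)=V^\circ$, contradicting minimality. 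Hence $\lambda=0$, and then both top edges of $T_{a_i,b_i}$ drop out, giving $\supp(F^\circ)\subseteq T_{a_i,b_i-1}$. Lemma~\ref{not_both} plays no role here.
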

 \begin{proof}
Let integer $k$ be the value of \eqref{multiple_of}. Then  $\lfloor (i+1) m \epell \rfloor=\frac{mk}{a\delta}$ and $\lfloor i(n-m) \epell  \rfloor=\frac{(n-m)k}{a\delta}$.
The assumption that `` $\ff^\circ$ satisfies the $i$-th collection of DSC '' has the following two consequences:

\noindent (a) $\text{supp}(\ff^\circ)\subseteq T_{mk/a\delta,nk/a\delta}$. 

\noindent (b) 
 $(F^\circ)^{(0,1)}_{j}$ is divisible by $(x+1)^{j-\lfloor i(n-m) \epell  \rfloor}$ for $j=\lfloor (i+1) m \epell \rfloor + \lfloor i(n-m) \epell  \rfloor=nk/a\delta$, and
 $(F^\circ)^{(1,1)}_{j}$ is divisible by $(x+y)^{j-\lfloor (i+1) m \epell \rfloor}$ for $j=2\lfloor (i+1) m \epell \rfloor + \lfloor i(n-m) \epell  \rfloor=(m+n)k/a\delta$. That is,
$$(x+1)^{mk/a\delta}\ | \ (F^\circ)^{(0,1)}_{nk/a\delta}, \quad (x+y)^{nk/a\delta}\ | \ (F^\circ)^{(1,1)}_{(m+n)k/a\delta}$$ 
\smallskip

Let $\lambda\in\mathbb{C}$ be the coefficient of $x^{mk/a\delta}y^{nk/a\delta}$ in $F^\circ$. It follows from (a) and (b) that
$$(F^\circ)^{(0,1)}_{nk/a\delta}=\lambda(x+1)^{mk/a\delta}y^{nk/a\delta}\quad\text{and}\quad(F^\circ)^{(1,1)}_{(m+n)k/a\delta} = \lambda x^{mk/a\delta}(x+y)^{nk/a\delta}.$$ 
Therefore to prove the lemma, it suffices to prove that $\lambda=0$.

Assume the contrary that $\lambda\neq0$. Then $N^0(F^\circ)=T_{mk/a\delta,nk/a\delta}$. 
Let $\g'(z)=\g^\circ(z)+\lambda z^k$  and let $F'=F-\g'(E^\circ)=F-\g^\circ(E^\circ)-\lambda(E^\circ)^k=F^\circ-\lambda(E^\circ)^k$. Note that the assumption $i\le m(n-m)(a-1)/a$ implies 
$$k=a\delta\lfloor i/m \rfloor/(n-m)\le a\delta-\delta\le a\delta-1=\deg(\g^\circ)-1,$$ 
where the first ``$\le$'' becomes ``$=$'' when $i=m(n-m)(a-1)/a$, the second  ``$\le$'' becomes ``$=$'' when $\delta=1$.
So $k\le\deg(\g^\circ)-1$, and the ``$=$'' holds if and only if $i=m(n-m)(a-1)/a$ and $\delta=1$. We consider two cases:

\noindent Case 1: If $i=m(n-m)(a-1)/a$ and $\delta=1$. Then $k=a-1$, and $F^\circ$ contains a nonzero term $\lambda x^{m(a-1)/a}y^{n(a-1)/a}$. But then the point $(m(a-1)/a,n(a-1)/a)\in \text{supp}(F^\circ)$ is the southwest vertex of $\mathcal{N}''$, contradicting the assumption that $\text{supp}(F^\circ)\subseteq N(F)\setminus \mathcal{N}''$.

\noindent Case 2: If $i<m(n-m)(a-1)/a$ or $\delta>1$. Then $k\le \deg(g^\circ)-2$, thus $g'(z)\in \mathbb{T}_F$.
By Lemma~\ref{Ecirctopdegpieces}, the top (0,1)-homogeneous part of $\lambda(E^\circ)^k$ is $\lambda((x+1)^{m/a\delta}y^{n/a\delta})^k=\lambda(x+1)^{mk/a\delta}y^{nk/a\delta}$, which is exactly the top (0,1)-homogeneous part of $F^\circ$. Thus $(0,1)\text{-deg}(F')<nk/a\delta$. But this yields $N^0(F')\subsetneq N^0(F^\circ)\  \big(\subseteq N^0(\ff)\setminus \mathcal{N}''\big)$, which contradicts the assumption that $V^\circ$ is minimal as given in Definition \ref{def:VF}.
 \end{proof}

\section{Implication from Conjecture~\ref{Jac_conj4} to Conjecture~\ref{Jac_conj}}

Given a polynomial $F\in \mathbb{C}[x,y]$, define polynomials $F_{zy}, F_{xw}, F_{zw}$ by the condition $F_{zy}(z,y)=F_{xw}(x,w)=F_{zw}(z,w)=F(x,y)$. Equivalently, define the following isomorphisms of $K$-algebras:
$$\aligned
& \phi_{zy}\in\text{Hom}(K[x,y], K[z,y]): x\mapsto z-1, y\mapsto y, \\
&\phi_{zy}^{-1}\in\text{Hom}(K[z,y], K[x,y]): z\mapsto x+1, y\mapsto y, \\
&\phi_{xw}\in\text{Hom}(K[x,y], K[x,w]): : x\mapsto x, y\mapsto w-x, \\
&\phi_{xw}^{-1}\in\text{Hom}(K[x,w], K[x,y]): x\mapsto x, w\mapsto x+y,\\
&\phi_{zw}\in \text{Hom}(K[x,y], K[z,w]): x\mapsto z-1, y\mapsto w+1-z. \\
&\phi_{zw}^{-1}\in \text{Hom}(K[z,w],K[x,y]): z\mapsto x+1, w\mapsto x+y, \\
\endaligned
$$ 
Then $F_{zy}=\phi_{zy}(F)$, $F_{xw}=\phi_{xw}(F)$, $F_{zw}=\phi_{zw}(F)$. 

Let $T^1_{m,n}$ be the quadrilateral with vertices $(0,0),(m+n,0),(m,n),(n-m,0)$,
$T^2_{m,n}$ be the rectangle with vertices $(0,0),(m,0),(m,n),(n,0)$, and 
$T^3_{m,n}$ be the trapezoid with vertices $(0,0)$, $(m,0)$, $(m,n)$, $(n-m,0)$. 
(See Figure \ref{fig:NNN}.)

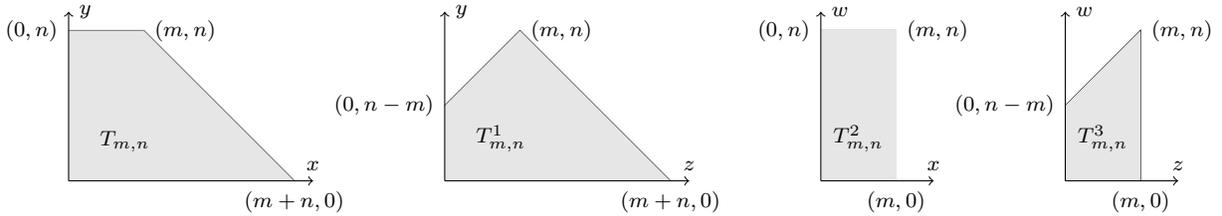
\begin{figure}[h]
\begin{center}
\begin{tikzpicture}[scale=0.25]
\draw (0,8)--(4,8)--(12,0);
\fill[black!10] (0,0)--(0,8)--(4,8)--(12,0)--(0,0);
\draw (4, 8) node[anchor=west] {\tiny $(m,n)$};
\draw (12, 0) node[anchor=north] {\tiny $(m+n,0)$};
\draw (0, 8) node[anchor=east] {\tiny $(0,n)$};
\draw (3, 1) node[anchor=south] {\tiny $T_{m,n}$};
\draw[->] (0,0) -- (13,0)
node[above] {\tiny $x$};
\draw[->] (0,0) -- (0,9)
node[right] {\tiny $y$};
\begin{scope}[shift={(20,0)}]
\draw (0,4)--(4,8)--(12,0);
\fill[black!10] (0,0)--(0,4)--(4,8)--(12,0)--(0,0);
\draw (4, 8) node[anchor=west] {\tiny $(m,n)$};
\draw (12, 0) node[anchor=north] {\tiny $(m+n,0)$};
\draw (0, 4) node[anchor=east] {\tiny $(0,n-m)$};
\draw (3, 1) node[anchor=south] {\tiny $T^1_{m,n}$};
\draw[->] (0,0) -- (13,0)
node[above] {\tiny $z$};
\draw[->] (0,0) -- (0,9)
node[right] {\tiny $y$};
\end{scope}
\begin{scope}[shift={(40,0)}]
\draw (0,8)--(4,8)--(4,0);
\fill[black!10] (0,0)--(0,8)--(4,8)--(4,0)--(0,0);
\draw (4, 8) node[anchor=west] {\tiny $(m,n)$};
\draw (4, 0) node[anchor=north] {\tiny $(m,0)$};
\draw (0, 8) node[anchor=east] {\tiny $(0,n)$};
\draw (2, 1) node[anchor=south] {\tiny $T^2_{m,n}$};
\draw[->] (0,0) -- (6,0)
node[above] {\tiny $x$};
\draw[->] (0,0) -- (0,9)
node[right] {\tiny $w$};
\end{scope}
\begin{scope}[shift={(53,0)}]
\draw (0,4)--(4,8)--(4,0);
\fill[black!10] (0,0)--(0,4)--(4,8)--(4,0)--(0,0);
\draw (4, 8) node[anchor=west] {\tiny $(m,n)$};
\draw (4, 0) node[anchor=north] {\tiny $(m,0)$};
\draw (0, 4) node[anchor=east] {\tiny $(0,n-m)$};
\draw (2, 1) node[anchor=south] {\tiny $T^3_{m,n}$};
\draw[->] (0,0) -- (6,0)
node[above] {\tiny $z$};
\draw[->] (0,0) -- (0,9)
node[right] {\tiny $w$};
\end{scope}
\end{tikzpicture}
\end{center}
\caption{From left to right: $T_{m,n}, T^1_{m,n},T^2_{m,n}, T^3_{m,n}$. (Assume $n>m>0$.)}
\label{fig:NNN}
\end{figure}

\begin{lemma}\label{lemma:N(F) change variables}
Assume  $m<n$. Consider the following two conditions.

{\rm(a)} $F^{(0,1)}_{n-i}$ is divisible by $(x+1)^{m-i}$ for all $0\le i\le m$.

{\rm(b)} $F^{(1,1)}_{m+n-i}$ is divisible by $(x+y)^{n-i}$ for all $0\le i\le n$. 

 Then the following statements hold true.

{\rm(i)}  ``$N^0(\ff)\subseteq T_{m,n}$ and the condition {\rm(a)} holds'' if and only if $N^0(F_{zy})\subseteq T^1_{m,n}$. 

{\rm(ii)} ``$N^0(\ff)\subseteq T_{m,n}$ and the condition {\rm(b)} holds'' if and only if $N^0(F_{xw})\subseteq T^2_{m,n}$.

{\rm(iii)} ``$N^0(\ff)\subseteq T_{m,n}$ and both {\rm(a)} and {\rm(b)} hold'' if and only if  $N^0(F_{zw})\subseteq T^3_{m,n}$.
\end{lemma}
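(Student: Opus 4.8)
The plan is to prove all three equivalences by translating divisibility statements about $w$-homogeneous components of $F$ into containment statements about the Newton polygon of the corresponding linear change of variables. The key observation is that each of the maps $\phi_{zy},\phi_{xw},\phi_{zw}$ is a shear-type substitution ($x\mapsto z-1$, or $y\mapsto w-x$, or both), so it acts on monomials of a fixed $w$-degree in a controlled way. Concretely, for $w=(0,1)$ the substitution $x\mapsto z-1$ fixes the $y$-exponent, so it maps $\mathcal{R}^{(0,1)}_k$ into $K[z]\cdot y^k$; similarly for $w=(1,1)$ the substitution $y\mapsto w-x$ sends $\mathcal{R}^{(1,1)}_k$ into $K[x]\cdot w^{\,?}$-type pieces respecting the total $(1,1)$-degree. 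I would set up this dictionary first, as a preliminary paragraph, and also record that $\phi_{zy}$ sends the binomial $x+1$ to $z$ and $\phi_{xw}$ sends $x+y$ to $w$, while $\phi_{zw}$ does both. These are the engine behind the whole lemma.

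For part (i), I would argue as follows. Write $F=\sum_k F^{(0,1)}_k$. Since $\phi_{zy}$ preserves the $y$-exponent, $F_{zy}=\sum_k \phi_{zy}(F^{(0,1)}_k)$ and each summand lies in $y^k\cdot K[z]$; moreover $\phi_{zy}(F^{(0,1)}_{n-i})$ is divisible by $(x+1)\mapsto z$ to the power $m-i$ exactly when $F^{(0,1)}_{n-i}$ is divisible by $(x+1)^{m-i}$ (note $\phi_{zy}$ is an isomorphism onto $K[z,y]$, hence divisibility is preserved verbatim, using the paper's Laurent-polynomial convention for $\mid$). Now I claim $N^0(F_{zy})\subseteq T^1_{m,n}$ iff for every $k$ the component $\phi_{zy}(F^{(0,1)}_{n-i})$ (with $k=n-i$) has $z$-degree at most $m+(n-k) = m+i$ AND is divisible by $z^{\max(0,\,n-m-(n-k))}=z^{m-i}$ when $i\le m$, together with $N^0(F)\subseteq T_{m,n}$ handling the degenerate edges. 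The trapezoid $T^1_{m,n}$ has vertices $(0,0),(m+n,0),(m,n),(n-m,0)$; slicing it by the line "$y$-coordinate $=k$" (i.e. fixing $y$-exponent $=k$, which is what a $(0,1)$-homogeneous piece does) gives exactly the $z$-interval $[\max(0,\,n-m-(n-k)),\,\min(m+n, m+(n-k))]$, which unwinds to $[\,m-i,\,m+i\,]$ for $k=n-i$, $0\le i\le m$, and to $[0,\,?]$ for $k<n-m$. Matching these two descriptions completes (i). Part (ii) is the mirror image: $\phi_{xw}$ fixes the $x$-exponent and sends $x+y\mapsto w$, so $F_{xw}=\sum_k\phi_{xw}(F^{(1,1)}_{m+n-k})$ with the $k$-th piece in $K[x]\cdot$(degree-$k$ in $w$ sense); the rectangle $T^2_{m,n}$ sliced by fixing the $x$-exponent at $x_0$ gives the $w$-interval $[\,x_0,\,n\,]$ when $x_0\le m$... actually the clean statement is that $(1,1)$-homogeneity of degree $j=m+n-i$ combined with $x$-degree $\le m$ and divisibility by $(x+y)^{n-i}$ (equivalently $w$-divisibility $w^{n-i}$ after $\phi_{xw}$) is exactly the condition "the $x$-exponent stays in $[m-i... ]$", matching $N^0(F_{xw})\subseteq T^2_{m,n}$. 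I would write this out in parallel with (i).

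For part (iii), I would derive it from (i) and (ii) rather than redo the computation. Observe $\phi_{zw}=\phi_{xw}^{(z,w)}\circ\phi_{zy}$ up to renaming (first replace $x$ by $z-1$, then $y$ by $w-(z-1)=w+1-z$), and that the $(0,1)$-condition (a) is preserved under $\phi_{xw}$-type shear while the $(1,1)$-condition (b) is preserved under $\phi_{zy}$-type shear, because each shear only affects the grading "transverse" to the other. Concretely, $T^3_{m,n}=T^1_{m,n}\cap T^2_{m,n}$ after the appropriate identification of coordinates (both have vertices including $(0,n-m)$ and $(m,n)$; $T^3$ is the trapezoid $(0,0),(m,0),(m,n),(n-m,0)$ which is exactly where the image lands when BOTH a slice-by-$z$ and slice-by-$w$ constraint hold). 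So "$N^0(F_{zw})\subseteq T^3_{m,n}$" $\iff$ "$N^0(F_{zy})\subseteq T^1_{m,n}$ and $N^0(F_{xw})\subseteq T^2_{m,n}$" $\iff$ "$N^0(F)\subseteq T_{m,n}$ and (a) and (b)", where the middle step needs a small lemma that imposing the two shear-constraints simultaneously is the same as imposing them one at a time — which follows because the two shears commute on the relevant graded pieces. I expect the main obstacle to be exactly this last compatibility point in (iii): making precise why the divisibility condition (a), which is about the $(0,1)$-grading, survives verbatim after the second substitution $y\mapsto w+1-z$ (which changes the $(0,1)$-grading), and dually for (b). The resolution is that (a) is equivalent, by parts (i)–(ii) machinery, to a pure Newton-polygon containment $N^0(F_{zy})\subseteq T^1_{m,n}$, and Newton-polygon containments behave simply under the remaining shear; so I would route everything through the polygon statements, never trying to track divisibility across a non-adapted substitution. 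I would also need to double-check the boundary/degenerate cases where $F^{(0,1)}_k$ or $F^{(1,1)}_k$ vanishes (then len $=-\infty$ and Lemma~\ref{elem_zero} is what licenses the reduction), and the endpoint constraints $i=m$, $i=n$ where the divisibility power drops to $0$ and only $N^0(F)\subseteq T_{m,n}$ is doing work.
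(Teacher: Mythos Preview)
Your approach is correct and essentially identical to the paper's: parts (i)--(ii) by translating $(x+1)$- and $(x+y)$-divisibility into $z$- and $w$-divisibility after the shear and reading off the horizontal (resp.\ anti-diagonal) slice of the target polygon, and part (iii) via $T^3_{m,n}=T^1_{m,n}\cap T^2_{m,n}$ together with the observation that each remaining shear preserves containment in the polygon it is adapted to. Two small fixes to make before writing it up: in (ii), $\phi_{xw}$ does \emph{not} fix the $x$-exponent (it sends $x^ay^b\mapsto x^a(w-x)^b$); what it preserves is the $(1,1)$-degree and the $(0,1)$-degree, and that is what you actually use. In (iii), the ``small lemma'' you anticipate is exactly the monomial check the paper performs: for $(i,j)\in T^1_{m,n}$ the support of $z^i(w+1-z)^j$ lies in the triangle with vertices $(i,0),(i,j),(i+j,0)\subseteq T^1_{m,n}$, and dually $(z-1)^iw^j$ has support in the segment from $(0,j)$ to $(i,j)\subseteq T^2_{m,n}$; you should carry out these two checks explicitly rather than invoke commutativity of the shears.
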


\begin{proof}
(i) If $N^0(\ff)\subseteq T_{m,n}$ and (a) holds, then $z^{m-i}$ divides $F^{(0,1)}_{n-i}$ for all $i$; that is, the support of $F_{zy}$ does not contain points $(j,n-i)$ for $j<m-i$; thus the support of $F_{zy}$ lies in the half plane $z\ge y-n+m$. Meanwhile,  since $\phi_{zy}$ sends $x\mapsto z-1$ and $y\mapsto y$, the polynomial $F_{zy}=\phi_{zy}(F)$ has the  same right boundary of the support as $F$. Thus the support of $F_{zy}$ lies in the intersection of $T_{m,n}$ and the halfplane $z\ge y-n+m$, which is $T^1_{m,n}$. The converse is obvious. 

(ii) is similar to (i). 
If $N^0(\ff)\subseteq T_{m,n}$ and (b) holds, then $w^{n-i}$ divides $F^{(1,1)}_{m+n-i}$ for all $i$; that is, the support of $F_{xw}$ does not contain points $(m+n-i-j,j)$ for $j<n-i$; thus the support of $F_{xw}$ lies in the half plane $x\le m$. Meanwhile, $\phi_{xw}$ sends $x\mapsto x$ and $y\mapsto w-x$, so the polynomial $F_{xw}=\phi_{xw}(F)$ and $F$ have the same top horizontal boundary of the support. Thus the support of $F_{xw}$ lies in the intersection of $T_{m,n}$ and the halfplane $x\le m$, which is $T^2_{m,n}$. The converse is obvious.

(iii) First we prove the ``only if'' part. The polynomial $F_{zw}$ can be obtained from $F_{zy}$ by $y\mapsto w+1-z$. For $(i,j)\in T^1_{m,n}$, since $z^iy^j=z^i(w+1-z)^j$ whose support is in the right triangle with vertices $(i,0),(i,j),(i+j,0)$ (which is contained in $T^1_{m,n}$), we see that $N^0(F_{zw})\subseteq T^1_{m,n}$.
Similarly, $F_{zw}$ can be obtained from $F_{xw}$ by $x\mapsto z-1$. For $(i,j)\in T^2_{m,n}$, since $x^iw^j=(z-1)^iw^j$ whose support is in the horizontal line segment with endpoints $(0,j),(i,j)$ (which is contained in $T^2_{m,n}$), we see that $N^0(F_{zw})\subseteq T^2_{m,n}$.
Thus $N^0(F_{zw})\subseteq T^1_{m,n}\cap T^2_{m,n}=T^3_{m,n}$. 

Next we prove the ``if'' part:  $N^0(\ff)\subseteq T_{m,n}$ is obvious. Meanwhile,
$N^0(F_{zw})\subseteq T^3_{m,n} \Rightarrow N^0(F_{zy})\subseteq T^1_{m,n}\Rightarrow (a)$;   
$N^0(F_{zw})\subseteq T^3_{m,n} \Rightarrow N^0(F_{xw})\subseteq T^2_{m,n}\Rightarrow (b)$;
so both (a) and (b) hold.
\end{proof}

\begin{lemma}\label{BtoA}
Conjecture~\ref{Jac_conj2} implies Conjecture~\ref{Jac_conj}.
\end{lemma}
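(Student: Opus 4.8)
The plan is to deduce Conjecture~\ref{Jac_conj} from Conjecture~\ref{Jac_conj2} by putting a pair $(F,G)$ satisfying the hypotheses of Conjecture~\ref{Jac_conj} into the normal form demanded by Conjecture~\ref{Jac_conj2}, using operations that send the bracket $[F,G]$ to a nonzero--scalar multiple of itself (hence keep it in $\mathbb{C}$ and preserve whether it vanishes). The operations are: composing both $F$ and $G$ with an automorphism $\Theta$ of $\mathbb{C}[x,y]$ (this multiplies $[F,G]$ by the determinant of the Jacobian matrix of $\Theta$, a nonzero constant); replacing $G$ by $G-\alpha(F)$ or $F$ by $F-\beta(G)$ with $\alpha,\beta\in\mathbb{C}[z]$ (this leaves $[F,G]$ unchanged, since $[F,\alpha(F)]=0$); rescaling $F$ and $G$ by nonzero constants; and interchanging $F$ and $G$. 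So it is enough to reach, after finitely many such operations, a pair $(\tilde F,\tilde G)$ and a tuple $(a',b',m,n)\in\mathcal{Q}$ for which conditions (1)--(5) of Conjecture~\ref{Jac_conj2} hold: Conjecture~\ref{Jac_conj2} then forces $[\tilde F,\tilde G]=0$, whence $[F,G]=0$, the conclusion of Conjecture~\ref{Jac_conj}.

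The heart of the argument is a normalization of Jacobian pairs; here the interesting case is $[F,G]\neq 0$ (if $[F,G]=0$ there is nothing to prove). One first observes that $[F_+,G_+]=0$ for the $(1,1)$-leading forms: the term $[F_+,G_+]$ is the unique piece of $[F,G]$ of $(1,1)$-degree $\deg F+\deg G-2>0$, and $[F,G]$ is a constant. Hence $F_+$ and $G_+$ are scalar multiples of powers of a common $(1,1)$-homogeneous polynomial $\Phi$; with the given ratio this reads $F_+=\lambda\Phi^{a}$, $G_+=\mu\Phi^{b}$. One then wants to apply linear, de~Jonqui\`eres, and translation automorphisms — together with the moves $G\mapsto G-\alpha(F)$ that keep degrees under control — to bring the smaller-degree component, still written $F$, to the trapezoidal shape $N^0(F)=T_{m,n}$ and to place its relevant ``roots'' at $x=-1$ along the horizontal lines and in the $x+y$ direction along the $(1,1)$-filtration. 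By Lemma~\ref{lemma:N(F) change variables}, conditions (2) and (3) of Conjecture~\ref{Jac_conj2} are then equivalent to $N^0(F_{zy})\subseteq T^1_{m,n}$ and $N^0(F_{xw})\subseteq T^2_{m,n}$ — i.e.\ to the support landing in the smaller trapezoids of Figure~\ref{fig:NNN} after the substitutions $\phi_{zy},\phi_{xw}$ introduced before Lemma~\ref{lemma:N(F) change variables} — and once $N^0(F)=T_{m,n}$ and (3) hold, the divisibilities $x^{m}\mid F_+$ and $(x+y)^{n}\mid F_+$ together with $\deg F_+=m+n$ force $F_+=\lambda\,x^{m}(x+y)^{n}$, so $\Phi$ is a scalar times $x^{m/a}(x+y)^{n/a}$ and in particular $a\mid m$, $a\mid n$. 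Obtaining this normal form — with $0<m<n$, $\gcd(a,b)=1$, $2\le a<b$ (should an automorphism along the way alter the degrees, one passes to the reduced ratio $a':b'$ of the new degrees, which must remain of this type), and the divisibility conditions in place — is exactly what the generalized Magnus formula of the first paper in this series, together with the classical reduction theory for the plane Jacobian conjecture, is designed to provide.

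Granting the normalized $F$, the rest is bookkeeping. Rescaling $F$ so the coefficient of $x^{m}y^{n}$ is $1$ gives $N^0(F-x^{m}y^{n})\subsetneq N^0(F)$, completing condition (1). Since $(F,G)$ is a Jacobian pair with both degrees $>1$, the classical similarity $N^0(F)\sim N^0(G)$ with the origin as centre \cite[Theorem~10.2.1]{vdEssen} holds, and the ratio $\deg F:\deg G=a:b$ forces $N^0(G)=\tfrac{b}{a}T_{m,n}=T_{bm/a,\,bn/a}$; rescaling $G$ then gives $N^0(G-x^{bm/a}y^{bn/a})\subsetneq N^0(G)$, which is condition (4). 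Condition (5) holds because $[F,G]$ is a nonzero-scalar multiple of the original constant bracket. Thus $(a,b,m,n)\in\mathcal{Q}$ and conditions (1)--(5) of Conjecture~\ref{Jac_conj2} are satisfied, so $[F,G]=0$, and tracing back through the operations, the original bracket is $0$ as well.

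The main obstacle is the normalization in the second paragraph: proving that, by automorphisms, the Newton polygon of the smaller component of a Jacobian pair can be put in the exact shape $T_{m,n}$ \emph{and} that the two families of divisibility conditions (2), (3) then hold with the correct factors $x+1$ and $x+y$, while keeping $2\le a<b$ and $m<n$. The divisibility conditions are the delicate part — they say that $F$ is, along the $(0,1)$- and $(1,1)$-filtrations, an $a$-th power up to strictly lower order terms — and they are precisely what the generalized Magnus formula is built to deliver; the inequality $m<n$ (rather than merely $m\le n$), which is what makes $T^1_{m,n},T^2_{m,n},T^3_{m,n}$ genuinely smaller trapezoids, should come out of the same normalization. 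Everything else — the behaviour of $[\,\cdot\,,\,\cdot\,]$ under the admissible operations, the monomial/divisor bookkeeping in conditions (1) and (4), and the extraction of the normalized $G$ from the similarity — is routine.
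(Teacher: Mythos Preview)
Your overall strategy is right --- reduce a putative counterexample to the normal form required by Conjecture~\ref{Jac_conj2} via bracket-preserving moves --- but the execution of the central step is a genuine gap, and your proposed fix for it is pointed at the wrong tool.

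You yourself flag the obstacle: arranging $N^0(F)=T_{m,n}$ \emph{together with} the divisibility conditions (2) and (3). You then assert that ``the generalized Magnus formula \dots\ is designed to provide'' this. It is not. The generalized Magnus formula (Theorem~\ref{prop:Magnus_formula_equivalence}) expresses the $w$-homogeneous pieces $G_{e-\mu}$ of $G$ as combinations of fractional powers of $F$; it says nothing about divisibility of the pieces $F^{(0,1)}_{n-i}$ or $F^{(1,1)}_{m+n-i}$ of $F$ by $(x+1)^{m-i}$ or $(x+y)^{n-i}$. Those conditions are constraints on $F$ alone, and no amount of information about how $G$ is built from $F$ will produce them.

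The paper's argument closes the gap in one stroke and bypasses the issue entirely. It invokes the known normalization (Nagata~\cite{Na1}, Makar-Limanov~\cite{ML1}): for any Jacobian pair $(f,g)$ there is an automorphism $\xi$ with $(m,n)\in N^0(\xi(f))\subseteq T^3_{m,n}$ for some $n>m>0$. Now set $F=\phi_{zw}^{-1}\bigl(\xi(f)\bigr)$, $G=\phi_{zw}^{-1}\bigl(\xi(g)\bigr)$. The point you missed is Lemma~\ref{lemma:N(F) change variables}(iii): the single containment $N^0(F_{zw})\subseteq T^3_{m,n}$ is \emph{equivalent} to ``$N^0(F)\subseteq T_{m,n}$ and conditions~(2) and~(3) hold''. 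So the divisibility conditions are not something to be proved after the fact; they are a tautological restatement of the $T^3_{m,n}$ shape under the change of variables $z=x+1$, $w=x+y$. Once $(m,n)$ is a vertex, $F$ picks up the monomial $x^my^n$ (and hence $x^{m+n}$ and $y^n$), giving $N^0(F)=T_{m,n}$ exactly and (1); similarity of Newton polygons then handles (4), and (5) is immediate from the chain rule. Your sketch circles around Lemma~\ref{lemma:N(F) change variables} but never uses part~(iii), which is precisely the bridge, and substitutes an appeal to Magnus' formula that cannot do the job.
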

\begin{proof}
We first recall a known result (see \cite{Na1} or \cite{ML1} for a proof): if $f,g\in\mathbb{C}[x,y]$ satisfy $[f,g]=1$, then there exists an automorphism $\xi$ of $\mathbb{C}[x,y]$ such that $N^0(\xi(f))$ is contained in the trapezoid $T^3_{m,n}$ and contains the vertex $(m,n)$ where $n>m>0$. \footnote{Indeed, a stronger result in \cite{CN} or \cite{GGV} asserts that we can further assume that  $(m,n)$ is the only point in the support of $\xi(f)$ that lies on the top boundary  of the trapezoid $T^3_{m,n}$ (that is, the boundary line segment of $T^3_{m,n}$ with slope $1$); we do not need this stronger result.}

Assuming Conjecture~\ref{Jac_conj2} is true. To prove Conjecture~\ref{Jac_conj}, it is enough to assume that $f,g\in \mathbb{C}[x,y]$ satisfy the conditions $[f,g]\in \mathbb{C}$, and $\{(m,n)\}\subseteq N^0(f)\subseteq T^3_{m,n}$ for some $n>m>0$, and then prove $[f,g]=0$. 

Multiplying $f,g$ with nonzero constants if necessary, We can assume that $N^0(f-x^my^n)\subsetneq N^0(f)$ and $N^0(g-x^{bm/a}y^{bn/a})\subsetneq N^0(g)$.
Let $F_{zw}\in \mathbb{C}[z,w]$ (resp.~$G_{zw}$) be obtained from $f$ (resp.~$g$) by simply replacing the variables $x$ by $z$ and $y$ by $w$. Define $F,G\in \mathbb{C}[x,y]$ by 
$$F=\phi_{zw}^{-1}(F_{zw}),\quad G=\phi_{zw}^{-1}(G_{zw}).$$

We claim that the conditions (1)--(5) in Conjecture~\ref{Jac_conj2}  are satisfied:

(1)(4)  Since $N^0(F_{zw})\subseteq T^3_{m,n}$,  we have $N^0(F)\subseteq T_{m,n}$ by Lemma \ref{lemma:N(F) change variables}. Since $f$ has a term $1x^my^n$, $F_{zw}$ has a term $1z^mw^n$, thus $F$ has terms $1x^my^n$, $1x^{m+n}$ and $1y^n$. It follows that that  $N^0(\ff - x^m y^n)\subsetneq N^0(\ff)$ and $N^0(F)=T_{m,n}$.  
To see $N^0(\GG)=T_{bm/a,bn/a}$, note that $[F,G]\in\mathbb{C}$ implies $N^0(G)$ is similar to $N^0(F)$.
The condition $N^0(g-x^{bm/a}y^{bn/a})\subsetneq N^0(g)$ implies $N^0(\GG - x^{bm/a} y^{bn/a})\subsetneq N^0(\GG)$. 
    
(2)(3) By Lemma \ref{lemma:N(F) change variables}, since $N^0(F_{zw})\subseteq T^3_{m,n}$, both conditions (a) and (b) in Lemma \ref{lemma:N(F) change variables} must hold. These two conditions are exactly (2) and (3).

(5) Since $[f, g]\in \mathbb{C}$, we have $[f,g]=[F_{zw},G_{zw}]=[F,G]\in \mathbb{C}$, where the second equality is by the chain rule.

Conjecture~\ref{Jac_conj2} then asserts that $[f,g]=[F,G]=0$, therefore it implies  Conjecture~\ref{Jac_conj}.
\end{proof}

\begin{lemma}\label{CtoB}
Conjecture~\ref{Jac_conj3} implies Conjecture~\ref{Jac_conj2}.
\end{lemma}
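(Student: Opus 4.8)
The plan is to assume Conjecture~\ref{Jac_conj3} and take $F,G$ satisfying conditions (1)--(5) of Conjecture~\ref{Jac_conj2}; we must deduce $[F,G]=0$. First I would invoke Conjecture~\ref{Jac_conj3} directly: it tells us that $F^\circ = F - \g^\circ(E^\circ) = 0$, i.e.\ $F = \g^\circ(E^\circ)$ for some $\g^\circ\in\mathbb{T}$. By Lemma~\ref{Ecirctopdegpieces}(ii) we have $\deg(\g^\circ)=a\delta\ge a\ge 2$, so $\g^\circ$ is a genuine nonlinear polynomial in one variable, and $E^\circ\in\mathcal{R}\setminus\mathbb{C}$ (since $N^0(E^\circ)=\frac{1}{a\delta}T_{m,n}$ is a nondegenerate trapezoid). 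Thus $F=\alpha(E)$ with $E=E^\circ\in\mathcal{R}\setminus\mathbb{C}$ and $\alpha=\g^\circ\in\mathbb{C}[z]$ of degree $\ge 2$.

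The remaining step is the elementary observation already flagged in the text right before Lemma~\ref{CtoB}: if $[F,G]\in\mathbb{C}$ and $F=\alpha(E)$ for $E\in\mathcal{R}\setminus\mathbb{C}$ and $\alpha\in\mathbb{C}[z]$ with $\deg\alpha\ge 2$, then $[F,G]=0$. I would prove this by the chain rule: $[F,G] = [\alpha(E),G] = \alpha'(E)\,[E,G]$, where $\alpha'(E)$ is the partial-derivative factor coming out of $\tfrac{\partial}{\partial x}\alpha(E)=\alpha'(E)E_x$ and similarly in $y$, so that the $2\times2$ Jacobian determinant scales by $\alpha'(E)$. Since $\deg\alpha\ge 2$, the polynomial $\alpha'(z)$ is nonconstant, hence $\alpha'(E)$ is a nonconstant element of $\mathcal{R}$ (as $E\notin\mathbb{C}$ and $\mathbb{C}[x,y]$ is a domain, substituting a nonconstant polynomial into a nonconstant one-variable polynomial yields a nonconstant polynomial). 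But $[F,G]\in\mathbb{C}$ is a constant, and a constant that equals (nonconstant)$\cdot[E,G]$ in the domain $\mathcal{R}$ forces $[E,G]=0$, whence $[F,G]=\alpha'(E)\cdot 0 = 0$.

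I do not anticipate a serious obstacle here: the content is entirely carried by Conjecture~\ref{Jac_conj3} (which gives the factorization $F=\g^\circ(E^\circ)$) plus the routine chain-rule identity $[\alpha(E),G]=\alpha'(E)[E,G]$. The only point requiring a word of care is checking that $\alpha'(E)$ is genuinely nonconstant — this uses $\deg\g^\circ = a\delta\ge 2$ from Lemma~\ref{Ecirctopdegpieces}(ii) together with $E^\circ\notin\mathbb{C}$, both of which are already established in the excerpt. Hence Conjecture~\ref{Jac_conj3} implies Conjecture~\ref{Jac_conj2}.
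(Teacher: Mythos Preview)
Your proposal is correct and follows essentially the same route as the paper: invoke Conjecture~\ref{Jac_conj3} to write $F=\g^\circ(E^\circ)$, use Lemma~\ref{Ecirctopdegpieces}(ii) to get $\deg\g^\circ=a\delta\ge 2$, apply the chain rule $[F,G]=(\g^\circ)'(E^\circ)\,[E^\circ,G]$, and conclude from $[F,G]\in\mathbb{C}$ and $(\g^\circ)'(E^\circ)$ nonconstant that $[E^\circ,G]=0$, hence $[F,G]=0$. Your write-up is in fact slightly more careful than the paper's (you spell out why $E^\circ\notin\mathbb{C}$ and why substituting it into a nonconstant $\alpha'$ yields a nonconstant polynomial), but the argument is the same.
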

\begin{proof}
It is clear that (i)--(iv) are equivalent. Assuming Conjecture~\ref{Jac_conj3} is true, it suffices to prove the following: if there is $\g\in \mathbb{T}$ such that $\ff - \g(E^\circ)=0$, then $[F,G]=0$.

By Lemma \ref{Ecirctopdegpieces}(ii),  $\deg(\alpha)=a\delta\ge a>1$. 
Using $F=\alpha(E^\circ)$, we get
$$
[F, G]  = \frac{d\g}{dz}(E^\circ) [E^\circ, G] \in K.
$$
Since $\deg\g>1$, $\frac{d\g}{dz}(E^\circ)$ is a nonconstant polynomial, consequently $[E^\circ,G]=0$, thus $[\ff, G]=0$. 
\end{proof}

\begin{proposition}\label{PQdivisible}
Let $\ff$ satisfy the conditions {\rm(1)--(3)} in Conjecture \ref{Jac_conj2}.  Let $Q$ be the unique polynomial obtained from Lemma \ref{lem:c=p^2 most generalized}, and let $R=F-Q^a$. Then the following holds. 

{\rm(i)} $Q^{(0,1)}_{\frac{n}{a}-i}$ is divisible by $(x+1)^{\frac{m}{a}-i}$ for all $0\le i\le \frac{m}{a}$, and all monomials of $R$ have (0,1)-degree $\le n-\frac{m}{a}-1$.
Consequently, $(E^\circ)^{(0,1)}_{\frac{n}{a\delta}-i}$ is divisible by $(x+1)^{\frac{m}{a\delta}-i}$ for all $0\le i\le \frac{m}{a\delta}$. 

{\rm(ii)} Assume that all monomials of $R$ have (0,1)-degree $\le n-\frac{n-m}{a}$. 
Then  $Q^{(1,1)}_{\frac{m+n}{a}-i}$ is divisible by $(x+y)^{\frac{n}{a}-i}$ for all $0\le i\le\frac{n}{a}$, and all monomials of $R$ have (1,1)-degree $<m+n-\frac{n}{a}$. 
Consequently, $(E^\circ)^{(1,1)}_{\frac{m+n}{a\delta}-i}$ is divisible by $(x+y)^{\frac{n}{a\delta}-i}$ for all $0\le i\le \frac{n}{a\delta}$. 
\end{proposition}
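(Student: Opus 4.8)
The plan is to work entirely with the $(0,1)$-grading for part (i) and the $(1,1)$-grading for part (ii), exploiting in each case Lemma~\ref{elem_zero} together with the divisibility hypotheses (2) and (3) of Conjecture~\ref{Jac_conj2} and the explicit top forms of $Q$ recorded in Lemma~\ref{lem:c=p^2 most generalized}(ii). Recall $R=F-Q^a$ has $\supp(R)\subseteq N^0(F)\setminus\mathcal{N}''$ by construction, and $N^0(Q)=\mathcal{N}'=\frac1aT_{m,n}$. For part (i), first note that the $(0,1)$-leading part of $Q^a$ must equal the $(0,1)$-leading part of $F$ in the top several graded pieces: precisely, since every point of $\supp(R)$ lies outside $\mathcal{N}''$, and $\mathcal{N}''=\mathcal{N}'+\frac{a-1}{a}\overrightarrow{OC}$ occupies the $(0,1)$-degree range from $n-\tfrac ma$ to $n$, the polynomial $R$ has all monomials of $(0,1)$-degree $\le n-\tfrac ma-1$. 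Hence for $j$ in the top range, $F^{(0,1)}_{j}=(Q^a)^{(0,1)}_{j}=\sum_{j_1+\cdots+j_a=j}Q^{(0,1)}_{j_1}\cdots Q^{(0,1)}_{j_a}$. I would then induct downward on $i$: the hypothesis (2) says $(x+1)^{m-i}\mid F^{(0,1)}_{n-i}$, and I want to deduce $(x+1)^{\frac ma-i}\mid Q^{(0,1)}_{\frac na-i}$. Using $Q^{(0,1)}_{n/a}=(x+1)^{m/a}y^{n/a}$ from Lemma~\ref{lem:c=p^2 most generalized}(ii) as the base case, the top term of the convolution $(Q^a)^{(0,1)}_{n-i}$ is $a\,\big(Q^{(0,1)}_{n/a}\big)^{a-1}Q^{(0,1)}_{n/a-i}$ plus terms each of which is divisible by a strictly larger power of $(x+1)$ (since the other factors $Q^{(0,1)}_{n/a-i'}$ with $i'<i$ are, by induction, divisible by $(x+1)^{m/a-i'}$ and there are at least two of them below the top degree in each remaining summand); dividing through by $(x+1)^{(a-1)m/a}=(x+1)^{m-m/a}$ and matching against the divisibility of $F^{(0,1)}_{n-i}$ gives $(x+1)^{m-i-(a-1)m/a}=(x+1)^{m/a-i}\mid Q^{(0,1)}_{n/a-i}$, as desired. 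The statement about $E^\circ$ then follows from \eqref{eq:QE}: since $Q=(E^\circ)^\delta+\text{lower}$, the same downward-induction argument applied to the pair $(E^\circ,Q)$ in place of $(Q,F)$, using $(E^\circ)^{(0,1)}_{n/a\delta}=(x+1)^{m/a\delta}y^{n/a\delta}$ from Lemma~\ref{Ecirctopdegpieces}(i), yields the divisibility of the $(E^\circ)^{(0,1)}_{n/a\delta-i}$.

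For part (ii), the structure is parallel but with the $(1,1)$-grading, $(x+y)$ in place of $(x+1)$, and hypothesis (3) in place of (2). The added assumption that all monomials of $R$ have $(0,1)$-degree $\le n-\tfrac{n-m}a$ is used to control where $R$ can sit relative to the $(1,1)$-grading: combined with $\supp(R)\subseteq N^0(F)\setminus\mathcal{N}''$, it should force all monomials of $R$ to have $(1,1)$-degree $<m+n-\tfrac na$, which is exactly the range complementary to the $(1,1)$-degrees occupied by $\mathcal{N}''$ (namely $m+n-\tfrac na$ up to $m+n$). Granting that, in the top $(1,1)$-degree range one has $F^{(1,1)}_{j}=(Q^a)^{(1,1)}_{j}$, and the same downward induction on $i$ — with base case $Q^{(1,1)}_{(m+n)/a}=x^{m/a}(x+y)^{n/a}$ from Lemma~\ref{lem:c=p^2 most generalized}(ii) — produces $(x+y)^{n/a-i}\mid Q^{(1,1)}_{(m+n)/a-i}$ from $(x+y)^{n-i}\mid F^{(1,1)}_{m+n-i}$, after dividing out the common factor $(x+y)^{(a-1)n/a}$ coming from the other $a-1$ copies of $Q^{(1,1)}_{(m+n)/a}$. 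The passage to $E^\circ$ again goes through \eqref{eq:QE} and Lemma~\ref{Ecirctopdegpieces}(i).

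The main obstacle I anticipate is bookkeeping the $(1,1)$-degree bound on $R$ in part (ii): one must verify carefully that the hypothesis ``$(0,1)$-degree of every monomial of $R$ is $\le n-\tfrac{n-m}a$'' together with $\supp(R)\subseteq N^0(F)\setminus\mathcal{N}''$ genuinely implies ``$(1,1)$-degree $<m+n-\tfrac na$''. This is a convex-geometry claim about the trapezoid $T_{m,n}$ and the sub-trapezoid $\mathcal{N}''$: a lattice point $(i,j)\in T_{m,n}\setminus\mathcal{N}''$ with $i+j\ge m+n-\tfrac na$ must, I expect, automatically have $j>n-\tfrac{n-m}a$, so that the extra hypothesis excludes precisely such points; drawing the picture (as in Figure~\ref{fig:NNN} and Figure~\ref{fig:example}) and comparing the two bounding lines of slope $-1$ and the left/right edges of $\mathcal{N}''$ should settle it, but it needs to be done with care about which inequalities are strict. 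A secondary technical point is ensuring, in the downward induction in both parts, that in every cross term of the $a$-fold convolution other than the distinguished one, at least two of the indices $j_1,\dots,j_a$ are strictly below the top $(0,1)$- (resp.\ $(1,1)$-) degree of $Q$, so that the accumulated power of $(x+1)$ (resp.\ $(x+y)$) strictly exceeds $(a-1)\tfrac ma$ (resp.\ $(a-1)\tfrac na$) and those terms do not interfere with reading off the divisibility of the single unknown factor; this is where the precise arithmetic $a\mid m$, $a\mid n$ and the shape of $\mathcal{N}'$ are needed.
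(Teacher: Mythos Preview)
Your proposal has a genuine gap at the very first step of part~(i). You claim that $\supp(R)\subseteq N^0(F)\setminus\mathcal{N}''$ directly forces every monomial of $R$ to have $(0,1)$-degree $\le n-\tfrac{m}{a}-1$. This is false. First, the $y$-range of $\mathcal{N}''$ is $[n-\tfrac{n}{a},\,n]$, not $[n-\tfrac{m}{a},\,n]$. More importantly, at any height $j\in[n-\tfrac{n}{a},n]$ the trapezoid $T_{m,n}$ extends over $x\in[0,\,m+n-j]$, whereas $\mathcal{N}''$ only covers $x\in[m\tfrac{a-1}{a},\,m+n-j]$; hence $T_{m,n}\setminus\mathcal{N}''$ contains the whole strip $0\le x<m\tfrac{a-1}{a}$ at every height up to $y=n$. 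Nothing about ``$\supp(R)$ lies outside $\mathcal{N}''$'' pushes the $(0,1)$-degree of $R$ down; it only caps the $x$-exponent in that range. The bound on the $(0,1)$-degree of $R$ is in fact one of the \emph{conclusions} of part~(i), not an input, so you cannot assume it in order to run the induction.

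The paper's fix is to prove $R^{(0,1)}_{n-i}=0$ and $(x+1)^{m/a-i}\mid Q^{(0,1)}_{n/a-i}$ \emph{simultaneously} in the same downward induction. At step $i$ (with $0\le i\le \tfrac{m}{a}$) one writes
\[
R^{(0,1)}_{n-i}=F^{(0,1)}_{n-i}-a\bigl(Q^{(0,1)}_{n/a}\bigr)^{a-1}Q^{(0,1)}_{n/a-i}-{\textstyle\sum'}\prod_l Q^{(0,1)}_{n/a-k_l},
\]
and observes that every term on the right is divisible by $(x+1)^{m(a-1)/a}$ (using $(x+1)^{m-i}\mid F^{(0,1)}_{n-i}$, the explicit $Q^{(0,1)}_{n/a}=(x+1)^{m/a}y^{n/a}$, and the inductive hypothesis on the $Q^{(0,1)}_{n/a-k_l}$ with $k_l<i$). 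On the other hand, since $n-i>n\tfrac{a-1}{a}$, the support constraint on $R$ forces every monomial of $R^{(0,1)}_{n-i}$ to have $x$-exponent $<m\tfrac{a-1}{a}$. These two facts are incompatible unless $R^{(0,1)}_{n-i}=0$ (Lemma~\ref{elem_zero}); only then does your divisibility extraction for $Q^{(0,1)}_{n/a-i}$ go through. The paper implements this via the substitution $\phi_{zy}\colon x\mapsto z-1$, which turns $(x+1)$-divisibility into a lower bound on $z$-exponents and makes the contradiction transparent. Exactly the same structural issue recurs in part~(ii): the $(1,1)$-degree bound on $R$ is again an output of the induction, not a prior geometric fact, and the paper handles it by a two-case split (small $x$-exponent is ruled out by the extra $(0,1)$-degree hypothesis; small $y$-exponent is ruled out by the $(x+y)$-divisibility argument via $\phi_{xw}$). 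Your convex-geometry heuristic covers only the first of these two cases. Finally, for the passage to $E^\circ$ the paper does not rerun the induction; it uses Lemma~\ref{lemma:N(F) change variables} to translate the divisibility of $Q$ into $N^0(\phi_{zy}(Q))\subseteq T^1_{m/a,n/a}$, scales by $1/\delta$ via \eqref{eq:QE}, and translates back.
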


\begin{proof}
\noindent (i)
Lemma~\ref{lem:c=p^2 most generalized} implies that $R$ contains no monomial of the form $x^{i}y^{j}$ with $i\ge m(a-1)/a$ and $j\ge n(a-1)/a$. Define $R_{zy}=\phi_{zy}(R)=R(z-1,y)$. 

Note that $R_{zy}$ must contain no $z^{\ge m(a-1)/a}y^{\ge n(a-1)/a}$. To see this, assume the contrary that $R_{zy}$ contains a monomial $z^{m'}y^{n'}$ where $m'\ge m(a-1)/a$ is maximal for fixed $n'\ge n(a-1)/a$. Then $R=R_{zy}(x+1,y)$ contains a monomial $x^{m'}y^{n'}$ that will not cancel out with another term in $R$. This contradicts the assumption on $R$.

Now we can prove by induction on $i$. 

For $i=0$, $Q^{(0,1)}_{n/a}$ is uniquely determined by the conditions that it contains a term $1x^{m/a}y^{n/a}$ and that $(Q^{(0,1)}_{n/a})^a-F^{(0,1)}_n$ does not contain monomials of the form $x^{\ge m(a-1)/a} y^n$. 
Since $F^{(0,1)}_n=(x+1)^my^n$, we must must have $Q^{(0,1)}_{n/a} =  (x+1)^{m/a}y^{n/a}$ by uniqueness (because it satisfies the above conditions).  So the statement is true for the base case.

For $0<i\le \frac{m}{a}$, $Q^{(0,1)}_{\frac{n}{a}-i}$ is uniquely determined by the condition that the following polynomial contains  no monomials of the form $x^{\ge m(a-1)/a}y^{\ge n(a-1)/a}$:
\begin{equation}\label{eq:pre E-}
R^{(0,1)}_{n-i}=F^{(0,1)}_{n-i}-\sum_{k_1+k_2+\cdots+k_a=i} Q^{(0,1)}_{\frac{n}{a}-k_1}Q^{(0,1)}_{\frac{n}{a}-k_2}\cdots Q^{(0,1)}_{\frac{n}{a}-k_a}
\end{equation}

We single out the case when $k_b=i$ for some $b$,  and $k_j=0$ for other $j\neq b$. The sum over all such $(k_1,\dots,k_a)$ is equal to 
$a(Q^{(0,1)}_{\frac{n}{a}})^{a-1}Q^{(0,1)}_{\frac{n}{a}-i}= a(x+1)^{\frac{m(a-1)}{a}}y^{\frac{n(a-1)}{a}}Q^{(0,1)}_{\frac{n}{a}-i}$. 

Applying $\phi_{zy}$ to \eqref{eq:pre E-}, the condition becomes that, the following polynomial in $\mathbb{C}[z,y]$,
\begin{equation}\label{eq:E-}
\phi_{zy}(R^{(0,1)}_{n-i})=\phi_{zy}(F^{(0,1)}_{n-i})-az^{\frac{m(a-1)}{a}}y^{\frac{n(a-1)}{a}}\phi_{zy}(Q^{(0,1)}_{\frac{n}{a}-i})-\sum_{k_1,\dots,k_a} \phi_{zy}(Q^{(0,1)}_{\frac{n}{a}-k_1}Q^{(0,1)}_{\frac{n}{a}-k_2}\cdots Q^{(0,1)}_{\frac{n}{a}-k_a}),
\end{equation}
contains no monomials of the form $z^{\ge \frac{m(a-1)}{a}}y^{\ge \frac{n(a-1)}{a}}$ (where the sum runs through all tuples $(k_1,\dots,k_a)$ of nonnegative integers less than $i$ and satisfying $k_1+\cdots+k_a=i$).
 
Note that $z^{m-i}|\phi_{zy}(F^{(0,1)}_{n-i})$ by assumption.  Also note that  $z^{\frac{m}{a}-k_j}|\phi_{zy}(Q^{(0,1)}_{\frac{n}{a}-j})$ (for $j=1,\dots,a$) by inductive hypothesis, thus $z^{m-i}|\phi_{zy}(Q^{(0,1)}_{\frac{n}{a}-k_1}Q^{(0,1)}_{\frac{n}{a}-k_2}\cdots Q^{(0,1)}_{\frac{n}{a}-k_a})$. Since $m(a-1)/a \le m-i$, all terms in \eqref{eq:E-} are divisible by $z^{\frac{m(a-1)}{a}}y^{n-i}$, where the exponent of $y$ satisfies  $n-i\ge n-\frac{m}{a}>n-\frac{n}{a}=\frac{n(a-1)}{a}$.
But \eqref{eq:E-} contains no monomials of the form $z^{\ge \frac{m(a-1)}{a}}y^{\ge \frac{n(a-1)}{a}}$. This forces \eqref{eq:E-} to be 0. This has two consequences:

First,  the second term in \eqref{eq:E-}, which is 
$az^{\frac{m(a-1)}{a}}y^{\frac{n(a-1)}{a}}\phi_{zy}(Q^{(0,1)}_{\frac{n}{a}-i})$, 
must be divisible by $z^{m-i}$ since all other terms are divisible by $z^{m-i}$. So $z^{\frac{m}{a}-i}|\phi_{zy}(Q^{(0,1)}_{\frac{n}{a}-i})$, thus $(x+1)^{\frac{m}{a}-i}|Q^{(0,1)}_{\frac{n}{a}-i}$.

Second, $\phi_{zy}(R^{(0,1)}_{n-i})=0$, thus $R^{(0,1)}_{n-i}=0$,  for $0\le i\le \frac{m}{2}$. For $i=\frac{m}{2}$, we conclude that all monomials of $R$ has $(0,1)$-degree $\le n-\frac{m}{2}-1$.

Third, we prove that $(E^\circ)^{(0,1)}_{\frac{n}{a\delta}-i}$ is divisible by $(x+1)^{\frac{m}{a\delta}-i}$ for all $i$. 
Since $N^0(Q)\subseteq\mathcal{N}'=\frac{1}{a}N^0(F)=T_{\frac{m}{a},\frac{n}{a}}$, we have $N^0(\phi_{zy}(Q))\subseteq T^1_{\frac{m}{a},\frac{n}{a}}$ by Lemma \ref{lemma:N(F) change variables}. 
Then by \eqref{eq:QE}, we have 
\begin{equation}\label{eq:N0EinT}
N^0(\phi_{zy}(E^\circ))=\frac{1}{\delta}N^0(\phi_{zy}(Q))\subseteq T^1_{\frac{m}{a\delta},\frac{n}{a\delta}}
\end{equation}
Applying Lemma \ref{lemma:N(F) change variables} again, we conclude that $(E^\circ)^{(0,1)}_{\frac{n}{a\delta}-i}$ is divisible by $(x+1)^{\frac{m}{a\delta}-i}$ for all $i$.

This finishes the proof of (i).

\smallskip

(ii) will be proved similarly. We prove by induction on $i$. 

For $i=0$, $Q^{(1,1)}_{\frac{m+n}{a}}=  x^{\frac{m}{a}}(x+y)^{\frac{n}{a}}$, so the statement is true in this case.

For $0<i\le \frac{n}{a}$, $Q^{(1,1)}_{\frac{m+n}{a}-i}$ is uniquely determined by the condition that the following polynomial contains no $x^{\ge \frac{m(a-1)}{a}}y^{\ge \frac{n(a-1)}{a}}$:
\begin{equation}\label{eq:F-}
R^{(1,1)}_{m+n-i}=F^{(1,1)}_{m+n-i}-ax^{\frac{m(a-1)}{a}}(x+y)^{\frac{n(a-1)}{a}}Q^{(1,1)}_{\frac{m+n}{a}-i}-\sum_{k_1,\dots,k_a} 
Q^{(1,1)}_{\frac{m+n}{a}-k_1}Q^{(1,1)}_{\frac{m+n}{a}-k_2}\cdots Q^{(1,1)}_{\frac{m+n}{a}-k_a}. \end{equation}

We claim that \eqref{eq:F-} must be 0. 

If not, suppose it contains a monomial $x^ky^{m+n-i-k}$ for some $k$, and we let $k$ be the smallest choice. Then either $k\le m(a-1)/a-1$, or $m+n-i-k\le n(a-1)/a-1$. We consider these two cases:

Case 1. If $k\le m(a-1)/a-1$, then the exponent of $y$ is $m+n-i-k\ge m+n-\frac{n}{a}-\frac{m(a-1)}{a}+1=n-\frac{n-m}{a}+1$. Since we assumed that all monomials of $R$ have $(0,1)$-degree $\le n-\frac{n-m}{a}$,
\eqref{eq:F-} must be 0.

Case 2. If $m+n-i-k\le n(a-1)/a-1$, then the $(0,1)$-degree of $\eqref{eq:F-}$ is $\le n(a-1)/a-1$. 
Applying $\phi_{xw}$ to \eqref{eq:F-}, we get
\begin{align}\label{eq:F-after}
\phi_{xw}(R^{(1,1)}_{m+n-i})=\phi_{xw}(F^{(1,1)}_{m+n-i})-ax^{\frac{m(a-1)}{a}}w^{\frac{n(a-1)}{a}}\phi_{xw}(Q^{(1,1)}_{\frac{m+n}{a}-i})
\\
-\sum_{k_1,\dots,k_a} 
\phi_{xw}(Q^{(1,1)}_{\frac{m+n}{a}-k_1}Q^{(1,1)}_{\frac{m+n}{a}-k_2}\cdots Q^{(1,1)}_{\frac{m+n}{a}-k_a}). \notag%
\end{align}
and that the degree of $w$ of each monomial of $\phi_{xw}(R^{(1,1)}_{m+n-i})$ should be $\le n(a-1)/a-1$. However, $w^{n(a-1)/a}$ divides every term in  \eqref{eq:F-after}: 

for the first term: $w^{n-i}|\phi_{xw}(F^{(1,1)}_{m+n-i})$ and $n-i\ge n-\frac{n}{a}=\frac{n(a-1)}{a}$ imply $w^{\frac{n(a-1)}{a}} |\phi_{xw}(F^{(1,1)}_{m+n-i})$; 

for the second term: it is obviously divisible by $w^{\frac{n(a-1)}{a}}$; 

for each product in the sum: since $w^{\frac{n}{a}-k_j}|\phi_{xw}(Q^{(1,1)}_{\frac{m+n}{a}-k_j})$ for each $j$, we conclude that $\phi_{xw}(\prod_{j=1}^a Q^{(1,1)}_{\frac{m+n}{a}-k_j})$ is divisible by $w^{n-i}$, therefore is divisible by $w^{\frac{n(a-1)}{a}}$ since $n-i\ge \frac{n(a-1)}{a}$. 

This contradiction forces \eqref{eq:F-after} and  \eqref{eq:F-} to be 0. 

Then we complete the proof similarly to the proof of (i): all terms in  \eqref{eq:F-after} are divisible by $w^{n-i}$, by studying the second term of \eqref{eq:F-after} we conclude $w^{\frac{n}{a}-i}|\phi_{xw}(Q^{(1,1)}_{\frac{m+n}{a}-i})$; the (1,1)-homogeneous part of $R$ of degree $m+n-i$ is 0 for $0\le i\le \frac{n}{a}$, so all monomials of $R$ has $(1,1)$-degree $\le m+n-\frac{n}{a}-1$. 

Last, we prove that 
$(E^\circ)^{(1,1)}_{\frac{m+n}{a\delta}-i}$ is divisible by $(x+y)^{\frac{n}{a\delta}-i}$ for all $i$. 
Since $N^0(Q)\subseteq\mathcal{N}'=\frac{1}{a}N^0(F)=T_{\frac{m}{a},\frac{n}{a}}$, we have $N^0(Q_{xw})\subseteq T^2_{\frac{m}{a},\frac{n}{a}}$ by Lemma \ref{lemma:N(F) change variables}. 
Then by \eqref{eq:QE}, we have $N^0(E^\circ_{xw})=\frac{1}{\delta}N^0(Q_{xw})\subseteq T^2_{\frac{m}{a\delta},\frac{n}{a\delta}}$. Applying Lemma \ref{lemma:N(F) change variables} again, we conclude that $(E^\circ)^{(1,1)}_{\frac{m+n}{a\delta}-i}$ is divisible by $(x+y)^{\frac{n}{a\delta}-i}$ for all $i$. 
\end{proof}


\begin{lemma}\label{Div_to_limit1}
 Let $w=(0,1)$ and $i\in [m(n-m)(a-1)/a, m(n-m)]$. If $\ff^\circ$ satisfies the $i$-th collection of $w$-DC, then $w\text{-deg}(\ff^\circ)< \lfloor i(n-m) \epell  \rfloor+m-m/a$.
\end{lemma}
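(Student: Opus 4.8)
The goal is to bound the $(0,1)$-degree of the remainder $\ff^\circ$ from above, given that it satisfies the $i$-th collection of $(0,1)$-divisibility conditions and that $i$ is large (at least $m(n-m)(a-1)/a$). The strategy is to combine three ingredients: (a) the support bound $\mathrm{supp}(\ff^\circ)\subseteq N^0(\ff)\setminus\mathcal{N}''$ coming from $\g^\circ\in\mathbb{T}_F$ and Definition~\ref{def:VF}; (b) the $(0,1)$-divisibility hypothesis, which says $(\ff^\circ)^{(0,1)}_j$ is divisible by $(x+1)^{j-\lfloor i(n-m)\epell\rfloor}$ for every $j$; and (c) Lemma~\ref{elem_zero}(1), which turns ``divisible by a high power of $(x+1)$ but short support'' into ``$=0$''. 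The point is that once $j$ exceeds the claimed bound $\lfloor i(n-m)\epell\rfloor + m - m/a$, the exponent $j-\lfloor i(n-m)\epell\rfloor$ forced by divisibility exceeds $m - m/a$, while the support restriction caps $\mathrm{len}\big((\ff^\circ)^{(0,1)}_j\big)$ at something strictly smaller, so Lemma~\ref{elem_zero}(1) kills the piece.

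First I would make the support restriction on $\ff^\circ$ explicit in coordinates. Since $\mathrm{supp}(\ff^\circ)=\mathrm{supp}(\ff-\g^\circ(E^\circ))\subseteq N^0(\ff)\setminus\mathcal{N}'' = T_{m,n}\setminus\mathcal{N}''$, and $\mathcal{N}''=\mathcal{N}'+\frac{a-1}{a}\overrightarrow{OC}$ is the translate of $\frac1a T_{m,n}$ by $\big(\tfrac{(a-1)m}{a},\tfrac{(a-1)n}{a}\big)$, a monomial $x^py^q$ in $\ff^\circ$ with a large value of $q$ (close to $n$) is forced to lie to the left of $\mathcal{N}''$, i.e. $p < \tfrac{(a-1)m}{a}$, or else to lie above it, which cannot happen for $q\le n$. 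Concretely, on the $(0,1)$-slice $q=j$ with $j$ near $n$, every monomial of $(\ff^\circ)^{(0,1)}_j$ has $x$-exponent $p$ with $p\le$ (the right edge of $T_{m,n}$ at height $j$, namely $m+n-j$) and, when $j$ is large enough that this slice meets $\mathcal{N}''$, also $p<\tfrac{(a-1)m}{a}$ on the part of the slice inside $\mathcal{N}''$—so altogether $\mathrm{len}\big((\ff^\circ)^{(0,1)}_j\big)\le \max\{0,\,m+n-j\}$ always, and the $\mathcal{N}''$-avoidance gives the sharper bound needed. I would write out that the relevant quantity is: for $j\ge \lfloor i(n-m)\epell\rfloor + m - m/a$ (i.e. at the threshold of the claim), the divisibility hypothesis forces $(x+1)^{j-\lfloor i(n-m)\epell\rfloor}$ to divide $(\ff^\circ)^{(0,1)}_j$ with $j-\lfloor i(n-m)\epell\rfloor \ge m - m/a = \tfrac{(a-1)m}{a}$, whereas the support of that slice, after removing $\mathcal{N}''$, has length strictly less than $\tfrac{(a-1)m}{a}$; Lemma~\ref{elem_zero}(1) then gives $(\ff^\circ)^{(0,1)}_j=0$. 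Running this over all $j$ at or above the bound yields $w\text{-deg}(\ff^\circ) < \lfloor i(n-m)\epell\rfloor + m - m/a$.

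The step I expect to be the main obstacle is the careful bookkeeping at (b)–(c): one must verify that the length of the $(0,1)$-slice of $T_{m,n}\setminus\mathcal{N}''$ at height $j$ is \emph{strictly} less than $j-\lfloor i(n-m)\epell\rfloor$ for all $j$ in the range in question, and this is exactly where the hypothesis $i\ge m(n-m)(a-1)/a$ is used—it guarantees $\lfloor i(n-m)\epell\rfloor$ is large enough (at least around $(n-m)(a-1)/a$, since $\epell=\tfrac{1}{m(n-m)}$) that the relevant slices of $\mathcal{N}''$ are nonempty, so the ``hole'' $\mathcal{N}''$ genuinely shortens the slice. I would handle the boundary cases $i=m(n-m)(a-1)/a$ and $i=m(n-m)$ separately if needed (at the top end $j$ can only be $n$ and the slice is a single point, at the bottom end the cut-out is largest), and I would use \eqref{xiyi}/Lemma~\ref{not_both} only if a floor-function parity issue arises when comparing $\lfloor i(n-m)\epell\rfloor$ with $\lfloor(i-1)(n-m)\epell\rfloor$. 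The estimate is elementary convex geometry once set up, but getting the strict inequality with the floor functions right is the delicate part; a clean way is to note $m\epell = 1/(n-m)$ and $(n-m)\epell = 1/m$, so $\lfloor i(n-m)\epell\rfloor = \lfloor i/m\rfloor$, which makes the arithmetic transparent.
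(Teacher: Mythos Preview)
Your proposal is correct and follows essentially the same approach as the paper's proof: both use that $\mathrm{supp}(\ff^\circ)\subseteq T_{m,n}\setminus\mathcal{N}''$ forces the $x$-exponents at height $j\ge n(a-1)/a$ to be strictly below $m(a-1)/a$, and then pit this against the divisibility exponent $j-\lfloor i(n-m)\epell\rfloor\ge m(a-1)/a$ to conclude vanishing. The paper phrases the final step as a direct $(1,0)$-degree contradiction rather than invoking Lemma~\ref{elem_zero}, but this is the same observation, and your identification of where the hypothesis $i\ge m(n-m)(a-1)/a$ enters (to ensure $j\ge n(a-1)/a$ so that the slice actually meets $\mathcal{N}''$) is exactly right; the boundary-case analysis and the reference to Lemma~\ref{not_both} turn out to be unnecessary, as the argument is uniform in $j$.
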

\begin{proof}
Let $j$ be an integer with $j\ge\lfloor i(n-m) \epell  \rfloor+m-m/a\; (=b_i-a_i+m\frac{a-1}{a})$. Then $j\ge \lfloor m(n-m)\frac{a-1}{a}\cdot(n-m)\epell\rfloor+m\frac{a-1}{a}
=n\frac{a-1}{a}$ (the $y$-coordinate of the bottom edge of the trapezoid $\mathcal{N}''$). So $(1,0)\text{-deg}(F^\circ)_j^{(0,1)}< m\frac{a-1}{a}$ (the $x$-coordinate of the left edge of $\mathcal{N}''$). 

If $\ff^\circ$ satisfies the $i$-th collection of $w$-DC, then $(x+1)^{j-(b_i-a_i)}|(F^\circ)_j^{(0,1)}$. If $(F^\circ)_j^{(0,1)}\neq0$, then  $(1,0)\text{-deg}(F^\circ)_j^{(0,1)}\ge {j-(b_i-a_i)}\ge m\frac{a-1}{a}$, contradicting the previous paragraph. So $(F^\circ)_j^{(0,1)}=0$ for all $j\ge\lfloor i(n-m) \epell  \rfloor+m-m/a$. 
\end{proof}

\begin{lemma}\label{Div_to_limit2}
Assume $(0,1)\text{-deg}(\ff^\circ)<n\frac{a-1}{a}$. Let $i\in [m(n-m)\frac{a-1}{a}-1, m(n-m)]$ and $w=(1,1)$. If $\ff^\circ$ satisfies the $i$-th collection of $w$-DC, then $w\text{-deg}(\ff^\circ)< \lfloor (i+1) m \epell \rfloor+n\frac{a-1}{a}$.
\end{lemma}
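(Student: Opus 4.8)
The plan is to mimic the proof of Lemma~\ref{Div_to_limit1} almost verbatim, but now with the roles of the two gradings interchanged and with the extra hypothesis $(0,1)\text{-deg}(\ff^\circ)<n\frac{a-1}{a}$ doing exactly the work that the trivial bound on $j$ did in the previous lemma. Concretely, suppose $j$ is an integer with $j\ge \lfloor (i+1) m \epell\rfloor + n\frac{a-1}{a}\;(=a_i+n\frac{a-1}{a})$, and I want to show $(F^\circ)^{(1,1)}_j=0$. Using \eqref{aibi}, the hypothesis $i\ge m(n-m)\frac{a-1}{a}-1$ gives $a_i=\lfloor(i+1)m\epell\rfloor \ge m\frac{a-1}{a}$, hence $j\ge (m+n)\frac{a-1}{a}$, which is the $(1,1)$-degree of the southwest vertex of $\mathcal{N}''$; equivalently, along the line $(1,1)\text{-deg}=j$ the portion of $N^0(\ff)$ lying to the left of $\mathcal{N}''$ has $(1,0)$-extent at most $m\frac{a-1}{a}$ (the $x$-coordinate of the left edge of $\mathcal{N}''$). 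I will record this as the statement that every monomial $x^py^q$ of $(F^\circ)^{(1,1)}_j$ with $p\ge m\frac{a-1}{a}$ would have to lie in $\mathcal{N}''$, which is forbidden by the standing assumption $\supp(F^\circ)\subseteq N^0(\ff)\setminus\mathcal{N}''$ from Definition~\ref{def:VF}.

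Next I bring in the $w=(1,1)$ divisibility condition: since $\ff^\circ$ satisfies the $i$-th collection of $(1,1)$-DC, $(x+y)^{j-a_i}\mid (F^\circ)^{(1,1)}_j$. If $(F^\circ)^{(1,1)}_j\neq 0$, then because a power of $x+y$ contributes only in the $(1,0)$-direction, the minimal $x$-exponent among the monomials of $(F^\circ)^{(1,1)}_j$ is at least $j-a_i\ge n\frac{a-1}{a}$. Here is where I need the other hypothesis: since $(0,1)\text{-deg}(\ff^\circ)<n\frac{a-1}{a}$, every monomial $x^py^q$ of $F^\circ$ has $q< n\frac{a-1}{a}$; combined with a minimal $x$-exponent $\ge n\frac{a-1}{a}$ this forces such a monomial to have $p\ge n\frac{a-1}{a}\ge m\frac{a-1}{a}$ and $q<n\frac{a-1}{a}$, i.e.\ it sits in the region $\{x\ge m\frac{a-1}{a}\}$; but on the line $(1,1)\text{-deg}=j$ with $j\ge(m+n)\frac{a-1}{a}$ that region is contained in $\mathcal{N}''$, contradicting $\supp(F^\circ)\subseteq N^0(\ff)\setminus\mathcal{N}''$. (Alternatively, and perhaps more cleanly, I can argue purely with the trapezoid geometry as in Lemma~\ref{Div_to_limit1}: on the line $(1,1)\text{-deg}=j$ the part of $N^0(\ff)\setminus\mathcal{N}''$ available to $F^\circ$ has bounded $(1,0)$-extent, and the $(1,1)$-DC divisibility $(x+y)^{j-a_i}$ forces too large an $(1,0)$-extent unless the homogeneous piece vanishes — I will pick whichever phrasing makes the inequality bookkeeping shortest.) Therefore $(F^\circ)^{(1,1)}_j=0$ for all $j\ge a_i+n\frac{a-1}{a}=\lfloor(i+1)m\epell\rfloor+n\frac{a-1}{a}$, which is exactly the claimed bound on $(1,1)\text{-deg}(\ff^\circ)$.

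The one subtlety, and the step I expect to cost the most care, is matching the floor functions and the fractional quantities $m\frac{a-1}{a}$, $n\frac{a-1}{a}$ precisely: I must confirm that $a\mid m$ and $a\mid n$ make all of $m\frac{a-1}{a}$, $n\frac{a-1}{a}$, $(m+n)\frac{a-1}{a}$ integers, that $\lfloor(i+1)m\epell\rfloor\ge m\frac{a-1}{a}$ really does follow from $i\ge m(n-m)\frac{a-1}{a}-1$ (using $\epell=\frac{1}{m(n-m)}$ and, if needed, Lemma~\ref{not_both} to handle the off-by-one from the floor and the ``$-1$'' in the range of $i$), and that the description of $\mathcal{N}''$ as $\mathcal{N}'+\frac{a-1}{a}\overrightarrow{OC}$ indeed puts its left edge at $x=m\frac{a-1}{a}$ and its bottom edge at $y=n\frac{a-1}{a}$, with $(1,1)$-degree of its southwest corner equal to $(m+n)\frac{a-1}{a}$. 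Once these elementary identities are pinned down, the rest is the same two-line divisibility contradiction as in Lemma~\ref{Div_to_limit1}, now in the $(1,1)$-grading with the hypothesis on $(0,1)\text{-deg}(\ff^\circ)$ supplying the needed containment in the forbidden trapezoid $\mathcal{N}''$.
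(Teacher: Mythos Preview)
Your setup is fine and matches the paper up through the inequality $j\ge (m+n)\frac{a-1}{a}$, but the core step is wrong. The assertion ``a power of $x+y$ contributes only in the $(1,0)$-direction, so the minimal $x$-exponent among the monomials of $(F^\circ)^{(1,1)}_j$ is at least $j-a_i$'' is false: $(x+y)^l$ contains the term $y^l$, so if $h=(x+y)^l g$ with $g\in\mathcal{R}^{(1,1)}_{j-l}$ nonzero, the minimal $x$-exponent of $h$ equals the minimal $x$-exponent of $g$, which can be $0$ (take $h=(x+y)^j$). What \emph{does} follow from $(x+y)^{j-a_i}\mid (F^\circ)^{(1,1)}_j\neq 0$ is a lower bound on the \emph{maximal $y$-exponent}, namely $(0,1)\text{-deg}\,(F^\circ)^{(1,1)}_j\ge j-a_i$: writing $(F^\circ)^{(1,1)}_j=(x+y)^{j-a_i}g$ with $g\in\mathcal{R}^{(1,1)}_{a_i}$, the minimal $x$-exponent of $g$ is at most $a_i$, so the maximal $y$-exponent of the product is at least $j-a_i$.

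Your subsequent geometric claim is also incorrect: on the line $p+q=j$ with $j\ge(m+n)\frac{a-1}{a}$, the region $\{p\ge m\frac{a-1}{a}\}\cap N^0(\ff)$ is \emph{not} contained in $\mathcal{N}''$, since $\mathcal{N}''$ also requires $q\ge n\frac{a-1}{a}$, while points with small $q$ (e.g.\ $(j,0)$) lie outside $\mathcal{N}''$. So even if your $x$-exponent bound held, the $\mathcal{N}''$ contradiction would not follow. The paper's proof avoids $\mathcal{N}''$ entirely here: once you have $(0,1)\text{-deg}\,(F^\circ)^{(1,1)}_j\ge j-a_i\ge n\frac{a-1}{a}$, this directly contradicts the standing hypothesis $(0,1)\text{-deg}(\ff^\circ)<n\frac{a-1}{a}$, and you are done. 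Replace your middle paragraph with this one-line observation and the proof is complete.
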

\begin{proof}
Let $j$ be an integer with $j\ge \lfloor (i+1) m \epell \rfloor+n-n/a\; (=a_i+n\frac{a-1}{a})$. Then $j\ge \lfloor (m(n-m)\frac{a-1}{a}-1+1)\cdot m \epell\rfloor+n\frac{a-1}{a}
=(m+n)\frac{a-1}{a}$.

If $\ff^\circ$ satisfies the $i$-th collection of $w$-DC, then $(x+y)^{j-a_i}|(F^\circ)_j^{(1,1)}$. If $(F^\circ)_j^{(1,1)}\neq0$, then  $(0,1)\text{-deg}(F^\circ)_j^{(1,1)}\ge {j-a_i}\ge n\frac{a-1}{a}$, contradicting the assumption. So $(F^\circ)_j^{(1,1)}=0$ for all $j\ge\lfloor (i+1)i m \epell \rfloor+n\frac{a-1}{a}$. 
\end{proof}

\begin{lemma}\label{DtoC}
Conjecture~\ref{Jac_conj4} implies Conjecture~\ref{Jac_conj3}.
\end{lemma}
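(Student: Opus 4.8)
The plan is to chain together the earlier reduction lemmas with a descending induction on the index $i$, using Conjecture~\ref{Jac_conj4} as the inductive engine. Assuming Conjecture~\ref{Jac_conj4}, suppose $\ff,\GG$ satisfy conditions (1)--(5) of Conjecture~\ref{Jac_conj2}; we want to produce statement (iv) of Conjecture~\ref{Jac_conj3}, namely $\ff^\circ=0$. The key observation is that $\ff^\circ=\ff-\g^\circ(E^\circ)$ automatically satisfies a ``top'' collection of divisibility and support conditions. Indeed, by Lemma~\ref{Ecirctopdegpieces}(ii) we have $\deg(\g^\circ)=a\delta$, so $N^0(\g^\circ(E^\circ))=a\delta\,N^0(E^\circ)=T_{m,n}$, and by Definition~\ref{def:VF} together with Lemma~\ref{lem:c=p^2 most generalized} we have $\text{supp}(\ff^\circ)\subseteq N^0(\ff)\setminus\mathcal{N}''$. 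Using the formulas $(a_{m(n-m)},b_{m(n-m)})=(m,n)$ from \eqref{aibi} one checks that this containment, combined with the divisibility statements for $E^\circ$ coming from Proposition~\ref{PQdivisible} (which give $(E^\circ)^{(0,1)}$ and $(E^\circ)^{(1,1)}$ the required divisibility by powers of $x+1$ and $x+y$), shows that $\ff^\circ$ satisfies the $m(n-m)$-th collection of DSC. This is the base case of the descent.

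Next I would run the descent in three stages matching the three parts of Conjecture~\ref{Jac_conj4}, read from $i=m(n-m)$ down to $i=0$. In the top range $i\in[m(n-m)(a-1)/a+1,\,m(n-m)]$, apply part (i) of Conjecture~\ref{Jac_conj4} to step down the $(0,1)$-DC, and part (ii) (valid on $[m(n-m)(a-1)/a,\,m(n-m)]$) to step down the $(1,1)$-DC; at each stage Lemmas~\ref{Div_to_limit1} and~\ref{Div_to_limit2} upgrade the divisibility conditions to degree bounds $w\text{-deg}(\ff^\circ)<b_i-a_i+m(a-1)/a$ and $w\text{-deg}(\ff^\circ)<a_i+n(a-1)/a$, and these degree bounds are exactly what is needed to recover the support condition $\text{supp}(\ff^\circ)\subseteq T_{a_{i-1},b_{i-1}}$, so the $(i-1)$-th collection of DSC holds. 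Once $i$ drops to the range $[1,\,m(n-m)(a-1)/a]$ we are inside the hypothesis of part (iii) of Conjecture~\ref{Jac_conj4}, and additionally Lemma~\ref{multiple_adelta} is available: whenever $a\delta a_i/m=a\delta b_i/(n-m)$ is an integer, Lemma~\ref{multiple_adelta} sharpens the support to $T_{a_i,b_i-1}$, which is what keeps the support and divisibility hypotheses mutually consistent as $i$ decreases (Lemma~\ref{not_both} guarantees the two ``jumps'' in $(a_i,b_i)$ never coincide, so at each step only one new divisibility/support constraint is imposed). Iterating, when $i=0$ we reach $\text{supp}(\ff^\circ)\subseteq T_{a_0,b_0}=T_{0,0}=\{(0,0)\}$, i.e.\ $\ff^\circ$ is a constant; since $\g^\circ\in\mathbb{T}$ and $E^\circ$ is non-constant, comparing with the normalization $N^0(\ff-x^my^n)\subsetneq N^0(\ff)$ forces this constant to be $0$, giving statement (iv) of Conjecture~\ref{Jac_conj3}.

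The bookkeeping I expect to be the main obstacle is making the three index ranges in Conjecture~\ref{Jac_conj4} dovetail precisely with the DSC-to-DC-to-DSC shuffle: Conjecture~\ref{Jac_conj4} as stated outputs only DC (not DSC) at the step $i-1$, so at each descent step one must separately re-establish the support condition, and that is where Lemmas~\ref{Div_to_limit1}, \ref{Div_to_limit2} and \ref{multiple_adelta} have to be invoked in just the right order and only on the index subranges where their hypotheses (e.g.\ the extra assumption $(0,1)\text{-deg}(\ff^\circ)<n(a-1)/a$ in Lemma~\ref{Div_to_limit2}, or the integrality hypothesis \eqref{multiple_of} in Lemma~\ref{multiple_adelta}) are met. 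One must also verify that the endpoints $m(n-m)(a-1)/a$ and $m(n-m)(a-1)/a+1$ are handled without a gap — this is exactly the place where the ``$=$'' cases flagged inside the proof of Lemma~\ref{multiple_adelta} (the case $i=m(n-m)(a-1)/a$, $\delta=1$) become relevant. Beyond this careful range-chasing the argument is a formal induction, so I would write it as: (1) establish the base case DSC at $i=m(n-m)$; (2) state a single inductive claim ``$\ff^\circ$ satisfies the $i$-th collection of DSC'' and prove the inductive step by splitting into the three ranges and applying the appropriate parts of Conjecture~\ref{Jac_conj4} plus the degree lemmas; (3) conclude at $i=0$.
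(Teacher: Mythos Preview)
Your overall architecture---descending induction on $i$ from $i=m(n-m)$ down to $i=0$, with the three parts of Conjecture~\ref{Jac_conj4} driving the descent---is the paper's approach, and your base case is correct. But two of the recovery steps are not handled by the lemmas you cite.

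In the top range you cannot run the $(0,1)$- and $(1,1)$-descents in parallel. Lemma~\ref{Div_to_limit2} carries the standing hypothesis $(0,1)\text{-deg}(\ff^\circ)<n\frac{a-1}{a}$, and this bound only becomes available \emph{after} the $(0,1)$-descent has already reached $i=m(n-m)\frac{a-1}{a}$ (it is the output of Lemma~\ref{Div_to_limit1} at that endpoint). So one must first complete the $(0,1)$-descent using Conjecture~\ref{Jac_conj4}(i) alone, then run the $(1,1)$-descent; DSC is first established at $i=m(n-m)\frac{a-1}{a}$, not maintained throughout the top range. More seriously, in the lower range Lemma~\ref{multiple_adelta} is \emph{not} the mechanism that restores the support condition at each step: its integrality hypothesis~\eqref{multiple_of} holds only at isolated values of $i$, and the paper invokes it just once, at $i=0$, to pass from $T_{0,0}$ to $T_{0,-1}=\emptyset$. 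For a generic step $i\mapsto i-1$ in $[1,m(n-m)\frac{a-1}{a}]$, going from ``DSC at $i$ and DC at $i-1$'' to ``$\text{supp}(\ff^\circ)\subseteq T_{a_{i-1},b_{i-1}}$'' is a direct Lemma~\ref{elem_zero} argument: split into the cases $(a_i,b_i)-(a_{i-1},b_{i-1})\in\{(0,1),(1,1)\}$ and check that on each new row or antidiagonal of $T_{a_i,b_i}\setminus T_{a_{i-1},b_{i-1}}$ the $(i-1)$-th divisibility exponent exceeds the available length, forcing that homogeneous piece to vanish. Your remark that ``only one new constraint is imposed'' is the start of this case analysis, but Lemma~\ref{multiple_adelta} does not finish it. Finally, your closing sentence---that the normalization $N^0(\ff-x^my^n)\subsetneq N^0(\ff)$ forces the residual constant to be zero---is not correct; a constant term does not interact with that normalization. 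The right argument uses the minimality of $V^\circ$: if $\ff^\circ=c\neq 0$ then $\g^\circ+c\in\mathbb{T}_F$ and $N^0\big(\ff-(\g^\circ+c)(E^\circ)\big)=\emptyset\subsetneq V^\circ$, a contradiction.
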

\begin{proof}
Let $\ff$ and $\GG$ satisfy the conditions (1)--(5) in Conjecture \ref{Jac_conj2}. We want to show that Conjecture~\ref{Jac_conj4} implies $F^\circ=0$. Assume Conjecture~\ref{Jac_conj4} is true. 

(I) If $w=(0,1)$ and $i=m(n-m)$, then $\ff^\circ$ satisfies the $i$-th collection of $w$-DC.

Proof of (I):  in \eqref{eq:N0EinT} of the proof of Proposition~\ref{PQdivisible}(i), we see that $N^0(\phi_{zy}(E^\circ))\subseteq T^1_{\frac{m}{a\delta},\frac{n}{a\delta}}$. By Lemma \ref{Ecirctopdegpieces}(ii), $\deg(\alpha^\circ)=a\delta$, thus
$N^0(\phi_{zy}(\g^\circ(E^\circ)))=N^0(\g^\circ(\phi_{zy}(E^\circ)))= a\delta T^1_{\frac{m}{a\delta},\frac{n}{a\delta}}=T^1_{m,n}$, and 
$N^0(\g^\circ(E^\circ))= T_{m,n}$. Then by Lemma \ref{lemma:N(F) change variables} (i),
 $\g^\circ(E^\circ)$ satisfies the condition (a) of Lemma \ref{lemma:N(F) change variables}. Then both $F$ and  $\g^\circ(E^\circ)$, therefore $F^\circ$, satisfy the $i$-th collection of $w$-DC for $i=m(n-m)$. 

\smallskip

(II) $F^\circ$ satisfies the $m(n-m)\frac{a-1}{a}$-th collection of $(0,1)$-DC, and 
$(0,1)\text{-deg}(\ff^\circ)<n\frac{a-1}{a}$.

Proof of (II): we first consider the following modified version of Conjecture~\ref{Jac_conj4}(i):

{\rm(i$'$)} Let $w$ and $i$ be as in Conjecture~\ref{Jac_conj4}(i). If  $\ff^\circ$ satisfies the $i$-th collection of $w$-DC and $w\text{-deg}(\ff^\circ)< \lfloor i(n-m) \epell  \rfloor+m-m/a$, then $\ff^\circ$ satisfies the $(i-1)$-th collection of $w$-DC and $w\text{-deg}(\ff^\circ)< \lfloor (i-1)(n-m) \epell  \rfloor+m-m/a$.

Indeed, (i$'$) is equivalent to (i), thanks to Lemma~\ref{Div_to_limit1}. 

Since we assume that Conjecture~\ref{Jac_conj4}(i) is true, (i$'$) holds true, too. Then by a downward induction on $i$ with the base case proved in (I), we 
conclude that  $F^\circ$ satisfies the $m(n-m)\frac{a-1}{a}$-th collection of $w$-DC. Then Lemma~\ref{Div_to_limit1} implies 
$w\text{-deg}(\ff^\circ)< \lfloor m(n-m)\frac{a-1}{a}\cdot(n-m) \epell  \rfloor+m\frac{a-1}{a}=n\frac{a-1}{a}$. So (II) is proved.
\smallskip

(III) If $w=(1,1)$ and $i=m(n-m)$, then $\ff^\circ$ satisfies the $i$-th collection of $w$-DC.

\noindent The proof of (III) is similar to the proof of (I), so we skip.
\smallskip

(IV) If $w=(1,1)$, then $F^\circ$ satisfies the $m(n-m)\frac{a-1}{a}$-th collection of $w$-DC, and 
$w\text{-deg}(\ff^\circ)<(m+n)\frac{a-1}{a}.$

Proof of (IV): it is similar to the proof of (II). We consider the following modified version of Conjecture~\ref{Jac_conj4}(ii):

{\rm(ii$'$)} Let $w$ and $i$ be as in Conjecture~\ref{Jac_conj4}(ii). If $\ff^\circ$ satisfies the $i$-th collection of $w$-DC and $w\text{-deg}(\ff^\circ)< \lfloor (i+1) m \epell \rfloor+n-n/a$, then $\ff^\circ$ satisfies the $(i-1)$-th collection of $w$-DC and $w\text{-deg}(\ff^\circ)< \lfloor i m \epell \rfloor+n-n/a$.

Indeed, (ii$'$) is equivalent to (ii), thanks to Lemma~\ref{Div_to_limit2}.  

Since we assume that Conjecture~\ref{Jac_conj4}(ii) is true, (ii$'$) holds true, too. Then by a downward induction on $i$ with the base case proved in (III), we 
conclude that  $F^\circ$ satisfies the $m(n-m)\frac{a-1}{a}$-th collection of $(1,1)$-DC. Then Lemma~\ref{Div_to_limit2} implies that 
$(1,1)\text{-deg}(\ff^\circ)< \lfloor m(n-m)\frac{a-1}{a}\cdot m \epell \rfloor+n\frac{a-1}{a}=(m+n)\frac{a-1}{a}$. So (IV) is proved.
\smallskip

(V)  Let $i\in [1,m(n-m)\frac{a-1}{a}]$. If $\ff^\circ$ satisfies the $i$-th collection of DSC, then $\ff^\circ$ satisfies the $(i-1)$-th collection of DSC. 

Proof of (V): assume $\ff^\circ$ satisfies the $i$-th collection of DSC. So 
$\text{supp}(F^\circ)\subseteq T_{a_{i},b_{i}}.$
By Conjecture~\ref{Jac_conj4}(iii), $\ff^\circ$ satisfies the $(i-1)$-th collection of DC, and we need to show that 
$\text{supp}(F^\circ)\subseteq 
T_{a_{i-1},b_{i-1}}.$

Note that the $i$-th collection of DSC is the same as the $(i-1)$-th collection if $(a_i,b_i)=(a_{i-1},b_{i-1})$. Because of \eqref{xiyi}, there are two cases to consider:

Case 1: $(a_{i-1},b_{i-1})=(a_i,b_i)-(0,1)$. Then lattice points in $T_{a_{i},b_{i}} \setminus T_{a_{i-1},b_{i-1}}$ are one of the following form: 

-- Form (a): $(s,b_i)$ with $s\le a_i$,

-- Form (b): $(s,t)$ with $s+t=a_i+b_i$ and $0\le t\le b_i$. 

We first show that $\text{supp}(F^\circ)$ contains no points of Form (a). 
Since we assume $\ff^\circ$ satisfies the $(i-1)$-th collection of DC, the power $(x+1)^{j-(b_{i-1}-a_{i-1})}=(x+1)^{j-(b_{i}-a_{i})+1}$ divides $(F^\circ)_j^{(0,1)}$ for any $j$. In particular, take $j=b_i$, then $(x+1)^{a_{i}+1}$ must  divide $(F^\circ)_{b_i}^{(0,1)}$. However, $(1,0)\textrm{-deg}(F^\circ)_{b_i}^{(0,1)}\le a_i$. This forces 
$(F^\circ)_{b_i}^{(0,1)}=0$. So  $\text{supp}(F^\circ)$ contains no points of Form (a).  

Next we show that $\text{supp}(F^\circ)$ contains no points of Form (b). 
Since now $(a_i,b_i)\notin\textrm{Supp}(F^\circ)$, we have $(0,1)\textrm{-deg}(F^\circ)_{a_i+b_i}^{(1,1)}<b_i$; but on the other hand, by the $i$-th collection of DC we know that $(x+y)^{j-a_{i}}$ divides $(F^\circ)_j^{(1,1)}$, in particular let $j=a_i+b_i$, we have $(x+y)^{b_i}|(F^\circ)_{a_i+b_i}^{(1,1)}$. 
This forces $(F^\circ)_{a_i+b_i}^{(1,1)}=0$. So $\text{supp}(F^\circ)$ contains no points of Form (b). 

Hence we conclude that 
$\text{supp}(F^\circ)\subseteq T_{a_{i-1},b_{i-1}}$ in Case 1. 

Case 2: $(a_{i-1},b_{i-1})=(a_i,b_i)-(1,1)$. Then lattice points in $T_{a_{i},b_{i}} \setminus T_{a_{i-1},b_{i-1}}$ are one of the following form: 

-- Form (a): $(s,b_i)$ with $s\le b_i$, 

-- Form (b): $(s,t)$ with $s+t=a_i+b_i$ and $0\le t\le b_i$, 

-- Form (c): $(s,t)$ with $s+t=a_i+b_i-1$ and $0\le t\le b_i-1$.

First, we show that $\text{supp}(F^\circ)$ contains no points of Form (b). 
Since we assume $\ff^\circ$ satisfies the $(i-1)$-th collection of DC, the power $(x+y)^{j-a_{i-1}}=(x+y)^{j-a_{i}+1}$ divides $(F^\circ)_j^{(1,1)}$ for any $j$. In particular, take $j=a_i+b_i$, then $(x+y)^{b_{i}+1}$ must  divide $(F^\circ)_{a_i+b_i}^{(1,1)}$. However, $(0,1)\textrm{-deg}(F^\circ)_{a_i+b_i}^{(1,1)}\le b_i$. This forces 
$(F^\circ)_{a_i+b_i}^{(1,1)}=0$. So  $\text{supp}(F^\circ)$ contains no points of Form (b).  

Second, we show that $\text{supp}(F^\circ)$ contains no points of Form (a). 
Since now $(a_i,b_i)\notin\textrm{Supp}(F^\circ)$, we have $(1,0)\textrm{-deg}(F^\circ)_{a_i+b_i}^{(1,1)}<a_i$; but on the other hand, by the $i$-th collection of DC we know that $(x+1)^{j-(b_i-a_{i})}$ divides $(F^\circ)_j^{(0,1)}$, in particular let $j=b_i$, we have $(x+y)^{a_i}|(F^\circ)_{b_i}^{(0,1)}$. 
This forces $(F^\circ)_{b_i}^{(0,1)}=0$. So $\text{supp}(F^\circ)$ contains no points of Form (a). 

Third, we show that $\text{supp}(F^\circ)$ contains no points of Form (c). 
Since we assume $\ff^\circ$ satisfies the $(i-1)$-th collection of DC, the power $(x+y)^{j-a_{i-1}}=(x+y)^{j-a_{i}+1}$ divides $(F^\circ)_j^{(1,1)}$ for any $j$. In particular, take $j=a_i+b_i-1$, then $(x+y)^{b_{i}}$ must  divide $(F^\circ)_{a_i+b_i-1}^{(1,1)}$. However, since Form (a) is impossible, $(0,1)\textrm{-deg}(F^\circ)_{a_i+b_i-1}^{(1,1)}\le b_i-1$. This forces 
$(F^\circ)_{a_i+b_i-1}^{(1,1)}=0$. So  $\text{supp}(F^\circ)$ contains no points of Form (c).  

So we conclude that 
$\text{supp}(F^\circ)\subseteq T_{a_{i-1},b_{i-1}}$ in Case 2. 

This proves (V).

(VI) Note that by (II) and (IV), if $i=m(n-m)\frac{a-1}{a}$, then $\ff^\circ$ satisfies the $i$-th collection of DSC. Using this as the base case and applying a downward induction on $i$, we 
conclude that  $F^\circ$ satisfies the $0$-th collection of DSC. Then 
$\text{supp}(F^\circ)\subseteq T_{0,0}$, but actually $\text{supp}(F^\circ)\subseteq T_{0,-1}=\emptyset$ thanks to Lemma~\ref{multiple_adelta}. Therefore $F^\circ=0$.
\end{proof}

\section{The remainder vanishing conjecture}
In this section, we state the remainder vanishing conjecture. The statement may look complicated, but every object appearing here is directly computable.

\subsection{Set-up}

Fix $(a, b, m, n) \in \mathcal{Q}$. Define $\Delta$ to be the set of positive common divisors of $\frac{m}{a}$ and $\frac{n}{a}$, and fix $\delta\in\Delta$. 
Let $\mathcal{I}=\{i\in [0,m(n-m)]\, : \, \frac{i}{m}\in \mathbb{Z}\text{ or }\frac{i+1}{n-m}\in \mathbb{Z}\}$
and fix $i\in \mathcal{I}$.

Define $u,L,\lowerE,\lowerF$ in two cases as below. Note that the two cases cannot hold simultaneously because of Lemma \ref{not_both}.  
\begin{equation}\label{uLetc1}
\left\{\begin{array}{ll}
u=0, 
L=x+1,  
\lowerE=\frac{n-m}{\delta a}, 
\text{ and }\lowerF= \lfloor i(n-m) \epell  \rfloor,&
\text{ if }\frac{i}{m}\in \mathbb{Z};\\
&\\
u=1, 
L=x+y, 
\lowerE=\frac{m}{\delta a}, 
\text{ and }\lowerF= \lfloor (i+1) m \epell \rfloor, &
\text{ if }\frac{i+1}{n-m}\in \mathbb{Z}.
\end{array}
\right.
\end{equation}
Let 
$$
R_2=\begin{cases}
\mathbb{C}[x^{\pm1},y^{\pm\frac{1}{m}}] &\textrm{ if } u=0;\\
\mathbb{C}[x^{\pm\frac{1}{n}},y^{\pm1}] &\textrm{ if } u=1.\\
\end{cases}
$$
Recall that in \eqref{aibi} we defined
$a_i=\lfloor (i+1) m \epell \rfloor$, $b_i=\lfloor (i+1) m \epell \rfloor + \lfloor i(n-m) \epell \rfloor$. 
Define
\begin{equation}\label{uLetc2}
\left\{\begin{array}{l}
w=(u,1)\in W;\\
d=um+n;\\
e=bd/a;\\
\mathfrak{m}=d+e-u-2;\\
\upperE=d/(\delta a);\\
\upperF=\left\{\begin{array}{ll} 
\lowerF + \frac{(un-um+m)(a-1)}{a}-1, \text{ if }i>\frac{m(n-m)(a-1)}{a};\\[1ex]
ua_i+b_i-1, \text{ if $i\le \frac{m(n-m)(a-1)}{a}$ and $\frac{a\delta\lfloor i(n-m) \epell  \rfloor}{n-m}=\frac{a\delta\lfloor (i+1) m \epell \rfloor}{m} \in \mathbb{Z}$};\\[1ex]
ua_i+b_i, \text{ otherwise};\end{array}\right.\\[1ex]
R_1=\mathbb{C}[\tilde{\Gamma}_0,\tilde{\Gamma}_1,...,\tilde{\Gamma}_{\upperE-1}, \tilde{x}_0,\tilde{x}_1,...,\tilde{x}_\upperF, \tilde{c}_1,\tilde{c}_2,...];\\
\tilde{F}^\circ=\sum_{j=\lowerF}^{\upperF} \tau^{j-\lowerF} \tilde{x}_{j} + \sum_{j=0}^{\lowerF-1} \tilde{x}_{j}\in R_1[\tau];\\[1ex]
\tilde{E}^\circ=\tau^{\upperE-\lowerE}  + \sum_{j=\lowerE}^{\upperE-1}  \tau^{j-\lowerE} \tilde{\Gamma}_{j}+ \sum_{j=0}^{\lowerE-1}   \tilde{\Gamma}_{j}\in R_1[\tau].
\end{array}\right.\end{equation}

\begin{example}\label{example2348}
Fix $(a,b,m,n)=(2,3,4,8)$, $\delta=1$, and $i=15$. Then $\frac{i+1}{n-m}\in \mathbb{Z}$. So $$\begin{array}{cccccccc}
u=1, &
d=12, & e=18, &
L=x+y, &
\lowerE=2, &
\upperE=6, &
\lowerF= 4, & \upperF=7.
\end{array}$$
\end{example}

Let $\text{Hom}(R_1[\tau],R_2)$ be the set of ring homomorphisms from $R_1[\tau]$ to $R_2$. 
For $w\in W$, we define the subset $\mathcal{S}^w$ of $\text{Hom}(R_1[\tau],R_2)$ by
$$\aligned
&\mathcal{S}^{(0,1)}=\{S\in \text{Hom}(R_1[\tau],R_2) \, :\, 
 S(\tau)= (x+1)y^{n/m} \};\\
&\mathcal{S}^{(1,1)}=\{S\in \text{Hom}(R_1[\tau],R_2) \, :\, 
 S(\tau)= x^{m/n}(x+y) \}.\\
\endaligned
$$

For $r\in\mathbb{Z}$, let $[r]_+=\max(0,r)$. This notation is commonly used for objects in the theory of cluster algebras.

Fix $E^\circ, F^\circ \in \mathbb{C}[x,y]$, $S\in \mathcal{S}^w$,    and 
$\g^\circ\in \mathbb{T}$ such that
\begin{equation}\label{mathfrakversion}\left\{\begin{array}{l}
S(\tilde{c}_\beta)\in \mathbb{C}\text{ for all }\beta\in[1,\mathfrak{m}];\\
F^\circ=S(\tilde{F}^\circ), w\text{-deg}(F^\circ)\le \upperF,\text{ and }(F^\circ)_j^w=S(\tau^{[j-\lowerF]_+} \tilde{x}_{j}) \text{ for }j\in [0,\upperF];\\
E^\circ=S(\tilde{E}^\circ), w\text{-deg}(E^\circ)\le \upperE,\text{ and }(E^\circ)_j^w=S(\tau^{[j-\lowerE]_+} \tilde{\Gamma}_{j})\text{ for }j\in [0, \upperE-1];\\
\text{deg}(\g^\circ)=\delta a.
\end{array}
\right.\end{equation}

Give a grading on $R_1[\tau]$ as follows: 
$$\deg(\tau)=\left\{
\begin{array}{ll}
n/m,&\text{ if }w=(0,1); \\
m/n+1,&\text{ if }w=(1,1),\\
\end{array}\right.
$$
$\deg(\tilde{\Gamma}_j)=j-[j-\lowerE]_+\deg(\tau)$, and 
$\deg(\tilde{x}_j)=j-[j-\lowerF]_+\deg(\tau)$. Note that  $\deg(\tau^{[j-\lowerE]_+} \tilde{\Gamma}_{j})=\deg(\tau^{[j-\lowerF]_+} \tilde{x}_{j})=j$.

For any nonzero element $f\in R_1[\tau]$ which is $\mathbb{Z}$-graded (i.e. $f$ is of the form $f=\sum_{j\in\mathbb{Z}} f_j$), define $\deg(f)=\max\{j:f_j\neq0\}$, and define the homogenization map
$$
h(f)=\sum_{j\le\text{deg}(f) } f_j t^{\text{deg}(f)-j}\in R_1[\tau, t].
$$
Note that all terms in $\g^\circ(\tilde{E}^\circ) + \tilde{F}^\circ$ are of integer degrees, so we can apply the homogenization map and get $$H=h\big(\g^\circ(\tilde{E}^\circ) + \tilde{F}^\circ\big).$$

For any $y_0,\dots,y_n\in R_1[\tau^{\pm 1}]$ and for any  $A=r/s$ with $r\in\mathbb{Z}$ and $s\in\mathbb{Z}_{>0}$,   
we have the following identity in the ring $R_1[\tau^{\pm 1},y_0^{\pm 1/s}][[t]]$ (where we fix a choice of $y_0^{1/s}$): 
\begin{equation}\label{eq:xA}
(y_0+y_1t+\cdots+y_{n}t^n)^A=\sum_{v_1,\dots,v_n\in\mathbb{Z}_{\ge0}}\frac{A(A-1)\cdots(A-\sum_{j=1}^n v_j) }
{\prod_{j=1}^{n} v_j!} y_0^{A-\sum_{j=1}^n v_j}y_1^{v_1}\cdots y_n^{v_n}t^{v_1+\cdots+nv_n}.
\end{equation}

For any $\widetilde{F}\in R_1[\tau^{\pm1}][[t]]$, denote  
$$[\widetilde{F}]_{t^j}=\textrm{ the coefficient of $t^j$ in $\widetilde{F}$},$$
which is an element in $R_1[\tau^{\pm1}]$. 

For any $\widetilde{F}\in R_1[\tau^{\pm1}]$, denote  
$$[\widetilde{F}]_{\tau^j}=\textrm{ the coefficient of $\tau^j$ in $\widetilde{F}$},$$
which is an element in $R_1$.

Let $r=\gcd(m,n)$. For each $\mu\in[0,\mathfrak{m}]$ and each $B\subseteq\{\beta\in[1,\mathfrak{m}] \,:\, r(e-\beta)/d \in\mathbb{Z}\}$, 
let 
\begin{equation}\label{mathfrakversion2}
\tilde{G}_{B,e-\mu} =  [H^{\frac{e}{d}}]_{t^{\mu}} + \sum_{\beta\in B\cap[1,\mu]}  \tilde{c}_\beta [H^{\frac{e-\beta}{d}}]_{t^{\mu-\beta}} \in R_1[\tau^{\pm1}],
\end{equation}
and
let $\mathfrak{j}(\mu)=\min\{\mathfrak{j}\in \mathbb{Z}_{\ge 0} \, : \, [\tilde{G}_{B,e-\mu}]_{\tau^{-j}}=0\text{ for all }j> \mathfrak{j}\}$. For $k\in [1,\mathfrak{j}(\mu)]$, define $\tilde{G}_{B,e-\mu, k}$ by 
$$
\tilde{G}_{B,e-\mu, k} =\sum_{j=\mathfrak{j}(\mu)+1-k}^{\mathfrak{j}(\mu)}\tau^{\mathfrak{j}(\mu)-j}[\tilde{G}_{B,e-\mu}]_{\tau^{-j}} \in  R_1[\tau]. 
$$
If $B=\{\beta\in[1,\mathfrak{m}] \,:\, r(e-\beta)/d \in\mathbb{Z}\}$, then we simply denote $\tilde{G}_{B,e-\mu}$ by $\tilde{G}_{e-\mu}$, and $\tilde{G}_{B,e-\mu, k}$ by $\tilde{G}_{e-\mu, k}$.

\subsection{The statement of the remainder vanishing conjecture}
For $k\in\mathbb{Z}_{\ge 0}$, define the following natural projection as a ring homomorphism:
 $$
P_k:R_2\longrightarrow R_2/ L^{k}.$$ 
Note that the codomain of $P_1$ is an integral domain, in particular,

-- if $P_1(fg)=0$, then $P_1(f)=0$ or $P_1(g)=0$; 

-- if $P_1(f^j)=0$ for some integer $j>0$, then $P_1(f)=0$. 

\noindent Indeed, this follows immediately from the fact that $(x+1)$ is irreducible in $\mathbb{C}[x^{\pm1},y^{\pm\frac{1}{m}}]$, and $x+y$ is irreducible in $\mathbb{C}[x^{\pm\frac{1}{n}},y^{\pm1}]$. 


\begin{definition}\label{supported}
Fix $(a,b,m,n)\in\mathcal{Q}$, $\delta\in \Delta$, and $i\in\mathcal{I}$, which will determine  the objects appearing in \eqref{uLetc1} and \eqref{uLetc2}. Fix $S, E^\circ, F^\circ, \g^\circ$ so that \eqref{mathfrakversion} is satisfied. 
We say that  a subset $B$ of $\{\beta\in[1,\mathfrak{m}] \,:\, r(e-\beta)/d \in\mathbb{Z}\}$  \emph{is supported with respect to }$(S,E^\circ,F^\circ,\g^\circ)$ if
\begin{equation}\label{soeq1}
\left\{\begin{array}{ll} P_{k}( S(\tilde{G}_{B,e-\mu, k}))=0 &\text{ for }\mu\in [0,\mathfrak{m}]\text{ and }k\in [1,\mathfrak{j}(\mu)];\\
S(\tilde{G}_{B,e-\mu})=0  & \text{ for } \mu\in [e+1,\mathfrak{m}].\\
\end{array} \right.
\end{equation}
\end{definition}
Note that if $B'\subseteq B$ and $B'$ is supported with respect to $(S,E^\circ,F^\circ,\g^\circ)$, then there exists $S'\in \mathcal{S}^w$ such that $S'(\tilde{c}_\beta)=0$ for every $\beta\in B\setminus B'$, and that each of $B'$ and $B$ is supported with respect to $(S',E^\circ,F^\circ,\g^\circ)$.

Denote $x_j=S(\tilde{x}_j)$ for $0\le j\le \upperF$, and $\Gamma_j=S(\tilde{\Gamma}_j)$ for $0\le j\le \upperE-1$.

In Section~\ref{EtoDsection}, we will show that the following conjecture implies the Jacobian conjecture.

\begin{conjecturealpha}\label{Jac_conj5} Assume the hypotheses as in Definition~\ref{supported}. Fix $\ell\in [\mathrm{u}_F,\mathrm{v}_F]$.
Suppose that
\begin{equation}\label{soeq}
B\text{ is supported with respect to }(S,E^\circ,F^\circ,\g^\circ)\text{ and }P_1(x_j)=0\text{ for } j\in [\ell+1,\mathrm{v}_F].
\end{equation}
If $E^\circ$ is a principal polynomial, then either $P_1(x_\ell)=0$ or there exists a proper subset of $B$ which is supported with respect to $(S,E^\circ,F^\circ,\g^\circ)$. 

\end{conjecturealpha}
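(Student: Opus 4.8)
\noindent The plan is to extract, from the Jacobian hypothesis $[\ff,\GG]\in\mathbb{C}$ and the generalized Magnus formula of the first paper of this series, one explicit polynomial identity for a carefully chosen $w$-homogeneous component of $\GG$, to reduce it modulo $L$, and to read the stated dichotomy off the resulting scalar congruence. The companion polynomial $\GG$ is the computational hinge: since $N^0(\GG)$ is similar to $N^0(\ff)$ with ratio $b:a$ and $[\ff,\GG]\in\mathbb{C}$, Magnus' formula expresses $\GG$, up to finitely many scalar corrections, as the polynomial part of a finite ``truncated fractional power'' series in $\ff=\g^\circ(E^\circ)+\ff^\circ$. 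In the bookkeeping of \eqref{uLetc2}--\eqref{mathfrakversion2} this is exactly what $\tilde G_{B,e-\mu}$ records: after applying $S$ (with $S(\tilde c_\beta)\in\mathbb{C}$) the element $S(\tilde G_{B,e-\mu})$ represents the $w$-homogeneous component of $\GG$ of $w$-degree $e-\mu$, and the hypothesis that $B$ is supported is, in essence, the requirement that the Magnus correction series with correction indices in $B$ produces an honest polynomial for every such component and vanishes in negative $w$-degree (the identities $P_k(S(\tilde G_{B,e-\mu,k}))=0$ for $k\le\mathfrak{j}(\mu)$ cancel the negative-$L$-power tail one layer at a time, and $S(\tilde G_{B,e-\mu})=0$ for $\mu>e$ records that $\GG$ has no component in negative $w$-degree).

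I would next fix the role of principality. The hypotheses \eqref{mathfrakversion} already pin the $w$-leading form of $E^\circ$ to $(x+1)^{m/\delta a}y^{n/\delta a}$ or $x^{m/\delta a}(x+y)^{n/\delta a}$ through $S(\tau)$; principality of $E^\circ$ is used separately, to underpin the degree bookkeeping $\deg(\g^\circ)=a\delta$ (Lemma~\ref{Ecirctopdegpieces}, and with it the values $d,e,\mathfrak{m},\upperE,\upperF$) and to guarantee, via Lemmas~\ref{maximal_factorization}--\ref{unique_principal}, that the Magnus correspondence between $\GG$ and $\ff$ is rigid --- with a non-principal (non-deepest) generator the correction indices in $B$ could not be controlled. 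Then I would locate where $x_\ell$ enters: the variable $\tilde x_\ell$ appears in $\tilde F^\circ$, hence --- after homogenization --- in $H=h(\g^\circ(\tilde E^\circ)+\tilde F^\circ)$, and hence, linearly at lowest order of the expansion \eqref{eq:xA}, in every $H^{(e-\beta)/d}$. Tracking the grading together with the floor-function bookkeeping of $\lowerF,\upperF,a_i,b_i$, there is a distinguished value $\mu=\mu_\ell$ for which the coefficient of $\tilde x_\ell$ in the deepest $\tau$-layer feeding $\tilde G_{B,e-\mu_\ell,\mathfrak{j}(\mu_\ell)}$ (or the layer immediately beyond it) is a \emph{unit} of $R_1$, while every remaining contribution to that layer involves only the constants $\tilde c_\beta$ and the variables $\tilde x_j$ with $j>\ell$ (the part of $\ff^\circ$ of $w$-degree $<\lowerF$ enters only through a monomial multiple and drops out, in the spirit of the use of Lemma~\ref{elem_zero} in the proof of Lemma~\ref{multiple_adelta}). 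Feeding in $P_1(x_j)=0$ for $j\in[\ell+1,\upperF]$ and reducing modulo $L$, the supportedness identity at $\mu_\ell$ collapses to a congruence $\theta\,x_\ell\equiv\Lambda\pmod L$, where $\theta$ is the image under $S$ of that unit (a nonzero scalar, so coprime to $L$) and $\Lambda$ is built solely from the scalars $S(\tilde c_\beta)$.

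The dichotomy is then forced. If $\Lambda\equiv 0\pmod L$ we get $P_1(x_\ell)=0$ and are done. If $\Lambda\not\equiv 0$, I would use the same degree correspondence to single out the index $\beta_0\in B$ whose correction $\tilde c_{\beta_0}$ the failing identity was consuming, and show that $B\setminus\{\beta_0\}$ is supported with respect to $(S,E^\circ,\ff^\circ,\g^\circ)$; this is the other alternative in the conclusion. Concretely, removing $\beta_0$ only deletes the term $\tilde c_{\beta_0}[H^{(e-\beta_0)/d}]_{t^{\mu-\beta_0}}$ from each $\tilde G_{B,e-\mu}$, so what has to be checked is that the equations \eqref{soeq1} for $B\setminus\{\beta_0\}$ still hold under $S$ --- which should follow from the failed identity itself (it certifies exactly the cancellation that $\tilde c_{\beta_0}$ was providing) together with the remark following Definition~\ref{supported}.

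The main obstacle is the bookkeeping of the middle paragraph: proving that the $\tilde x_\ell$-coefficient in the relevant $\tau$-layer of $\tilde G_{B,e-\mu_\ell}$ is a genuine unit coprime to $L$, that all other terms of that layer are either $\equiv 0\pmod L$ by hypothesis or pure functions of the $\tilde c_\beta$, and that the blocking index $\beta_0$ is well defined. All of this must be carried out uniformly in the two cases $w=(0,1)$ and $w=(1,1)$ of \eqref{uLetc1} and across the three-way definition of $\upperF$ in \eqref{uLetc2} (with its $i\le m(n-m)(a-1)/a$ versus $i> m(n-m)(a-1)/a$ subdivision); in the $w=(1,1)$ case one must moreover interface with the $(0,1)$-degree control on $\ff^\circ$ coming from Lemma~\ref{Div_to_limit1} and with Proposition~\ref{PQdivisible}, exactly as in the proof of Lemma~\ref{DtoC}.
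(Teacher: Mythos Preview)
The statement you are attempting to prove is Conjecture~\ref{Jac_conj5}, the \emph{remainder vanishing conjecture}, and the paper does \emph{not} prove it. Immediately after stating the conjecture and its consequence the authors write that ``In the forthcoming paper(s) including \cite{GHLL4}, we plan to prove the remainder vanishing conjecture,'' and the only result the paper establishes about it is the implication Conjecture~\ref{Jac_conj5} $\Rightarrow$ Conjecture~\ref{Jac_conj4} in Section~\ref{EtoDsection}. So there is no proof in the paper against which to compare your proposal; you are sketching an approach to an open conjecture.

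On the content of your sketch itself: you correctly identify that the supportedness condition \eqref{soeq1} abstracts the output of the generalized Magnus formula, and the broad shape of your plan (isolate a $\tau$-layer in which $\tilde x_\ell$ enters with an invertible coefficient, reduce modulo $L$, and read off a dichotomy) is a plausible line of attack. But, as you yourself flag, the middle paragraph is not a proof: the existence of a distinguished $\mu_\ell$, the claim that the $\tilde x_\ell$-coefficient in the relevant $\tau$-layer of $\tilde G_{B,e-\mu_\ell}$ maps under $S$ to a unit coprime to $L$, and the well-definedness of a single blocking index $\beta_0$ are all assertions, not arguments. Two specific gaps deserve emphasis. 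First, your invocation of $[\ff,\GG]\in\mathbb{C}$ at the outset overreaches: Conjecture~\ref{Jac_conj5} is formulated purely in terms of $(S,E^\circ,\ff^\circ,\g^\circ)$ and the supportedness hypothesis, with no $\GG$ present; the passage from an actual Jacobian pair to a supported $B$ is precisely the content of the implication proof in Section~\ref{EtoDsection}, not an available hypothesis here. Second, your treatment of the second alternative is too thin: removing $\beta_0$ from $B$ alters every $\tilde G_{B,e-\mu}$ with $\mu\ge\beta_0$, so all of \eqref{soeq1} must be re-verified for $B\setminus\{\beta_0\}$ with respect to the \emph{same} $S$, not just the single identity at $\mu_\ell$; the remark following Definition~\ref{supported} runs in the wrong direction for this purpose (it enlarges a supported $B'$ to $B$ after modifying $S$, rather than shrinking $B$ while keeping $S$ fixed).
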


This conjecture will be called the  \emph{remainder vanishing conjecture}. Note that when $\ell=\upperF$, the condition ``$P_1(x_j)=0$ for $j\in [\ell+1,\upperF]$'' is vacuous. Also note that Conjecture E has the following consequence by applying it iteratively:

\noindent {\bf Consequence of Conjecture \ref{Jac_conj5}.} 
 {\it  Assume the same hypotheses. Fix $\ell\in [\mathrm{u}_F,\mathrm{v}_F]$. Suppose that $E^\circ$ is a principal polynomial, $B$ is a subset of $\{\beta\in[1,\mathfrak{m}] \,:\, r(e-\beta)/d \in\mathbb{Z}\}$ supported with respect to $(S,E^\circ,F^\circ,\g^\circ)$, and $P_1(x_j)=0\text{ for } j\in [\ell+1,\mathrm{v}_F]$. Then $P_1(x_\ell)=0$.}
\smallskip




There is a systematic way to solve $\eqref{soeq}$. In \cite{GHLL3}, we give some examples to illustrate how to solve it. In the forthcoming paper(s) including \cite{GHLL4}, we plan to prove the  remainder vanishing conjecture.

\section{The implication from the remainder vanishing conjecture to Conjecture~\ref{Jac_conj4}}\label{EtoDsection}


 We recall the generalized Magnus' formula from \cite{HLLN}.

\begin{theorem}[\cite{HLLN}] \label{prop:Magnus_formula_equivalence}
Suppose $[F, G] \in  \mathbb{C}$. For any direction $w=(u,v) \in W$, let $d=w\text{-}\deg(F_+)$ and $e=w\text{-}\deg(G_+)$.  Assume $d>0$. Write the $w$-homogeneous  decompositions $F=\sum_{i\leq d} F_i$ and $G=\sum_{i\leq e} G_i$. 
Let $r\in\mathbb{Z}_{>0}$ be the largest integer such that $F_d^{1/r}\in  \mathbb{C}[x,y]$.\footnote{There is some ambiguity of the notation $F_d^{1/r}$ since it is unique up to an $r$-th root of unity, so we need to fix a choice of $F_d^{1/r}$. In this paper, there is often a natural choice of $F_d^{1/r}$ by requiring its certain coefficient to be $1$.} 
Then there exists a unique sequence of constants $c_0,c_1,\dots,c_{d+e-u-v-1}\in  \mathbb{C}$ such that $c_0\neq 0$ and 
\begin{equation}\label{Magnus_formula_equivalence}
G_{e-\mu} = \sum_{\beta=0}^{\mu} c_\beta [h(F)^{\frac{e-\beta}{d}}]_{t^{\mu-\beta}}
\end{equation}
for every integer $\mu\in\{0,1,...,d+e-u-v-1\}$. 
Moreover, $c_\beta=0$ if $r(e-\gamma)/d\notin\mathbb{Z}$. 
\end{theorem}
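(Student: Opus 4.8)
The plan is to pass to the $w$-homogenization and then produce the constants $c_\beta$ by solving a recursion over a finite extension of $\mathbb{C}(x,y)$, using the hypothesis $[F,G]\in\mathbb{C}$ (not merely the leading-form relation) to force all but the top coefficients to be scalars. Write $d=w\text{-}\deg(F_+)$, $e=w\text{-}\deg(G_+)$, $F_d=F_+$, $G_e=G_+$, and regard $h(F),h(G)\in\mathbb{C}[x,y,t]$ as homogeneous of degrees $d,e$ for the grading $\deg x=u,\deg y=v,\deg t=1$. The first step is to observe that $[h(F),h(G)]=[F,G]\,t^{\,d+e-u-v}$: the Jacobian $[h(F),h(G)]$ taken in $x,y$ is homogeneous of degree $d+e-u-v$ and specializes at $t=1$ to $[F,G]\in\mathbb{C}$, hence equals that monomial. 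In particular if $d+e-u-v\le 0$ then $[F,G]=0$ and the index set is empty, so I may assume $d+e-u-v\ge 1$. Extracting the coefficient of $t^{0}$ gives $[F_d,G_e]=0$, and since $F_d,G_e$ are $w$-homogeneous this is the classical structural fact that $G_e=c_0\,(F_d^{1/r})^{re/d}$ for a nonzero constant $c_0$ (so in particular $re/d\in\mathbb{Z}_{\ge 0}$), which I would simply cite.

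Next, set $L=\mathbb{C}(x,y)\!\left(F_d^{1/d}\right)$; by maximality of $r$ this is a cyclic extension of degree $d/r$, and I extend the $w$-grading by $w\text{-}\deg(F_d^{1/d})=1$. In $L[[t]]$ the series $h(F)=F_d\bigl(1+\tfrac{F_{d-1}}{F_d}t+\cdots\bigr)$ is a unit admitting a canonical $A$-th power for every $A\in\mathbb{Q}$, so the equations $G_{e-\mu}=\sum_{\beta=0}^{\mu}c_\beta\,[h(F)^{(e-\beta)/d}]_{t^{\mu-\beta}}$ determine $c_0,c_1,\dots\in L$ recursively, the leading factor being $[h(F)^{(e-\mu)/d}]_{t^{0}}=F_d^{(e-\mu)/d}\in L^{\times}$; equivalently $h(G)=\sum_{\beta\ge 0}c_\beta\,t^{\beta}h(F)^{(e-\beta)/d}$ in $L[[t]]$. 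Each $c_\beta$ is then $w$-homogeneous of degree $0$, $c_0=G_e/F_d^{e/d}\ne 0$ by the previous paragraph, and uniqueness of the sequence is built into the recursion.

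The crux is that $c_\beta\in\mathbb{C}$ for $0\le\beta\le d+e-u-v-1$ and that $c_\beta=0$ when $r(e-\beta)/d\notin\mathbb{Z}$. Applying the derivation $[h(F),-]$ to the identity in $L[[t]]$ and using $[h(F),h(F)^{A}]=0$ yields
\[
\sum_{\beta\ge 0} t^{\beta}\,[h(F),c_\beta]\,h(F)^{(e-\beta)/d}=[F,G]\,t^{\,d+e-u-v}.
\]
I would induct on $\beta$: once $c_0,\dots,c_{\beta-1}\in\mathbb{C}$ their brackets vanish, so the coefficient of $t^{\beta}$ on the left is exactly $[F_d,c_\beta]\,F_d^{(e-\beta)/d}$, which for $\beta\le d+e-u-v-1$ must vanish; as $F_d^{(e-\beta)/d}$ is a unit, $[F_d,c_\beta]=0$. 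This is fed into an elementary sub-lemma: a $w$-homogeneous element of $L$ of degree $0$ killed by $[F_d,-]$ is constant. Its proof decomposes into $\mathrm{Gal}(L/\mathbb{C}(x,y))$-eigenspaces to reduce to $c\in\mathbb{C}(x,y)$, notes that a $w$-homogeneous degree-$0$ rational function depends only on the single degree-$0$ coordinate ($\xi=x/y$ if $w=(1,1)$, $x$ if $w=(0,1)$, $y$ if $w=(1,0)$), and then computes directly that $[F_d,c]=0$ forces $c'=0$. Finally, for the vanishing clause: the $c_\beta$ with $\beta\le d+e-u-v-1$ are now scalars, hence Galois-fixed, so decomposing the truncation $h(G)\equiv\sum_{\beta<d+e-u-v}c_\beta t^{\beta}h(F)^{(e-\beta)/d}\pmod{t^{\,d+e-u-v}}$ into eigenspaces and using that $h(G)$ is Galois-fixed, the partial sum over $\beta$ with $(e-\beta)\not\equiv 0\pmod{d/r}$ must vanish; isolating its lowest $t$-order term (whose coefficient is a unit times $c_\beta$) forces $c_\beta=0$ precisely when $r(e-\beta)/d\notin\mathbb{Z}$.

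I expect the main obstacle to be keeping the formal and fractional-power bookkeeping rigorous — fixing the ring $L[[t]]$ in which the $A$-th powers and the derivation $[h(F),-]$ live, checking $[h(F),h(F)^A]=0$ there, and executing the Galois descent cleanly in the last step — rather than any single deep idea; the one genuinely substantive external input, the leading-form structure $[F_d,G_e]=0\Rightarrow G_e=c_0(F_d^{1/r})^{re/d}$, is classical and I would quote it.
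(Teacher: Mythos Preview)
The paper does not prove this theorem at all: it is quoted from the companion paper \cite{HLLN} (Theorem~\ref{prop:Magnus_formula_equivalence} is explicitly attributed there) and is merely used as a black box in Section~\ref{EtoDsection}. So there is no proof in the present paper to compare against.

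That said, your outline is sound and is the natural route to the generalized Magnus formula: homogenize, expand $h(G)$ as $\sum_\beta c_\beta t^\beta h(F)^{(e-\beta)/d}$ over $L=\mathbb{C}(x,y)(F_d^{1/d})$, apply the derivation $[h(F),-]$ and use $[h(F),h(G)]=[F,G]\,t^{d+e-u-v}$ together with $[h(F),h(F)^A]=0$ to force $[F_d,c_\beta]=0$ inductively, and then invoke the Galois action to get the vanishing when $r(e-\beta)/d\notin\mathbb{Z}$. The one place your sketch is thin is the sub-lemma ``$w$-homogeneous of degree $0$ and $[F_d,c]=0$ implies $c\in\mathbb{C}$'': writing $c=\sum_{j=0}^{s-1}a_j\theta^j$ with $\theta=F_d^{1/d}$ and $s=d/r$, the condition gives $[F_d,a_j]=0$ with $a_j\in\mathbb{C}(x,y)$ homogeneous of degree $-j$, and the one-variable ODE you allude to yields $a_j=C_jF_d^{-j/d}$. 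You then need the short extra observation that $F_d^{-j/d}\notin\mathbb{C}(x,y)$ for $0<j<s$ (maximality of $r$), which forces $C_j=0$; your phrase ``reduce to $c\in\mathbb{C}(x,y)$'' hides this step. With that made explicit, the argument is complete.
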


Note that the last condition on $c_\gamma$ implies that every nonzero summand appearing on the right side of \eqref{Magnus_formula_equivalence} is in $\mathcal{R}[F_d^{-1/r}]$, so must be a rational function.

\begin{lemma} 
Conjecture~\ref{Jac_conj5} implies Conjecture~\ref{Jac_conj4}.
\end{lemma}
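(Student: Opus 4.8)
# Proof proposal

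\textbf{Overall approach.} The plan is to fix a quadruple $(a,b,m,n)\in\mathcal{Q}$ and polynomials $F,G$ satisfying (1)--(5) of Conjecture~\ref{Jac_conj2}, together with the associated data $E^\circ$, $F^\circ$, $\g^\circ$ from Definition~\ref{def:VF}. For each of the three statements (i), (ii), (iii) of Conjecture~\ref{Jac_conj4} I would set up the corresponding choice of $i\in\mathcal{I}$, $\delta\in\Delta$, and direction $w=(u,1)$ via \eqref{uLetc1}--\eqref{uLetc2}, and then translate the hypothesis ``$F^\circ$ satisfies the $i$-th collection of $w$-DC'' (plus, in case (iii), the support condition) into the statement that a suitable subset $B\subseteq\{\beta\in[1,\mathfrak{m}]:r(e-\beta)/d\in\mathbb{Z}\}$ is supported with respect to $(S,E^\circ,F^\circ,\g^\circ)$ for an appropriate $S\in\mathcal{S}^w$, and that $P_1(x_j)=0$ for $j\in[\ell+1,\upperF]$ with $\ell$ one step below the ``current'' divisibility level. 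The key bridge is Theorem~\ref{prop:Magnus_formula_equivalence}: applying the generalized Magnus' formula to the pair $(\g^\circ(E^\circ)+F^\circ, G)$ (which has the same $w$-leading form as $F$ since $\deg\g^\circ=a\delta$ by Lemma~\ref{Ecirctopdegpieces}) produces precisely the constants $c_\beta$; feeding the homogenization $H=h(\g^\circ(\tilde E^\circ)+\tilde F^\circ)$ and a homomorphism $S$ sending $\tilde c_\beta\mapsto c_\beta$, $\tilde x_j\mapsto(F^\circ)^w_j$-data, $\tilde\Gamma_j\mapsto(E^\circ)^w_j$-data realizes the abstract polynomials $\tilde G_{B,e-\mu}$ as the actual $w$-homogeneous pieces $G_{e-\mu}$ of $G$ (up to the $L$-adic bookkeeping).

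\textbf{Key steps, in order.} First I would record the dictionary between divisibility by powers of $L=x+1$ (resp.\ $x+y$) and vanishing under $P_k$: for a $w$-homogeneous $h$, $L^k\mid h$ iff $P_k(S(\text{corresponding }\tilde x_j\text{ or truncation}))=0$, and similarly the support condition ``$\supp(F^\circ)\subseteq T_{a_i,b_i}$'' is equivalent to $w\text{-}\deg(F^\circ)\le\upperF$ plus the $P_1(x_j)=0$ conditions for the indices $j$ that would otherwise push $\supp$ outside the trapezoid (this is why $\upperF$ is defined piecewise in \eqref{uLetc2}, with Lemma~\ref{multiple_adelta} handling the degenerate ``$T_{a_i,b_i-1}$'' case). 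Second, using Theorem~\ref{prop:Magnus_formula_equivalence} I would show that the $i$-th collection of $w$-DC for $F^\circ$ forces each $G_{e-\mu}$ to be divisible by the appropriate power of $L$, which is exactly the assertion that the relevant $B$ is supported and, via the second line of \eqref{soeq1}, that $S(\tilde G_{B,e-\mu})=0$ for $\mu\in[e+1,\mathfrak{m}]$ (this last uses that $G$ has $w$-degree $e$). Third, I would invoke the Consequence of Conjecture~\ref{Jac_conj5}: since $E^\circ$ is a principal polynomial (Definition~\ref{def:Ecirc}) and $B$ is supported with $P_1(x_j)=0$ for $j\in[\ell+1,\upperF]$, we conclude $P_1(x_\ell)=0$; iterating $\ell$ downward through $[\lowerF,\upperF]$ gives all the divisibility of $(F^\circ)^w_j$ one notch stronger, i.e.\ the $(i-1)$-th collection of $w$-DC (for (i) and (ii)) or the $(i-1)$-th collection of DC (for (iii)). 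Finally I would check the index arithmetic: that the ranges $i\in[m(n-m)(a-1)/a+1,m(n-m)]$ etc.\ match the cases of $\upperF$, and that decrementing $i$ by $1$ changes $\lowerF,\upperF$ in the way dictated by \eqref{xiyi} and Lemma~\ref{not_both}, so that ``the $(i-1)$-th collection'' is literally the system solved after one downward pass of the Consequence.

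\textbf{Main obstacle.} The genuinely delicate step is the exact matching between the combinatorial/geometric language of Definition~\ref{def:ith divisibility} (divisibility of $w$-homogeneous pieces by powers of $x+1$ and $x+y$, plus the trapezoidal support condition) and the ring-theoretic language of Definition~\ref{supported} (vanishing of $P_k(S(\tilde G_{B,e-\mu,k}))$ and of $S(\tilde G_{B,e-\mu})$). One has to verify that the truncations $\tilde G_{B,e-\mu,k}$ capture precisely the ``lowest $k$ coefficients in $\tau^{-1}$'' of $G_{e-\mu}$, which under the substitution $S(\tau)=(x+1)y^{n/m}$ or $x^{m/n}(x+y)$ correspond to the obstruction to $L^k\mid G_{e-\mu}$ after clearing the monomial factor; getting the powers of $L$, the shifts $\lowerF$, and the piecewise $\upperF$ to line up — including the boundary cases $i=m(n-m)(a-1)/a$ and $\delta=1$ that appeared in the proof of Lemma~\ref{multiple_adelta} — is where essentially all the bookkeeping lives. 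Once that translation is in place, the logical structure (apply Magnus' formula, then apply the Consequence of Conjecture~\ref{Jac_conj5} iteratively) is routine. I would also need the automorphism interpretation from Lemma~\ref{lemma:N(F) change variables} and Proposition~\ref{PQdivisible} to see that conditions (2),(3) of Conjecture~\ref{Jac_conj2} propagate to $E^\circ$, ensuring the leading-form hypotheses of Theorem~\ref{prop:Magnus_formula_equivalence} hold with the right value of $r$.
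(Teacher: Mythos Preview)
Your plan is the paper's own argument: build $S$ from the constants supplied by the generalized Magnus' formula (Theorem~\ref{prop:Magnus_formula_equivalence}), verify that the full set $B=\{\beta:r(e-\beta)/d\in\mathbb{Z}\}$ is supported, and then run the Consequence of Conjecture~\ref{Jac_conj5} downward from $\ell=\upperF$ to $\ell=\lowerF$ to upgrade the $i$-th DC to the $(i-1)$-th. Two points deserve sharpening.

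First, your second ``key step'' misidentifies the source of the supported condition. You do \emph{not} argue that the $i$-th $w$-DC on $F^\circ$ forces $G_{e-\mu}$ to be divisible by powers of $L$; rather, once $S$ is defined so that $S(\tilde G_{B,e-\mu})=G_{e-\mu}$, the first line of \eqref{soeq1} holds simply because $G_{e-\mu}\in\mathbb{C}[x,y]$ is a genuine polynomial for $\mu\in[0,e]$ (polynomiality, not $L$-divisibility, is what kills the negative-$\tau$ tail), and the second line holds because $G_{e-\mu}=0$ for $\mu>e$. The $i$-th DC on $F^\circ$ is used earlier, to ensure the decomposition $F^\circ=\sum L^{[j-\lowerF]_+}x^*_j$ exists so that $S$ can be defined satisfying \eqref{mathfrakversion}; here Lemmas~\ref{Div_to_limit1} and~\ref{Div_to_limit2} are what give $w\text{-}\deg(F^\circ)\le\upperF$.

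Second, the three cases are not independent but strictly sequential: the paper proves that Conjecture~\ref{Jac_conj5} together with Conjecture~\ref{Jac_conj4}(i) implies (ii), and Conjecture~\ref{Jac_conj5} together with (i)+(ii) implies (iii). The reason is that the $(1,1)$-decomposition of $E^\circ$ needed in parts (ii) and (iii) comes from Proposition~\ref{PQdivisible}(ii), which carries an extra hypothesis on the $(0,1)$-degree of $R$; that hypothesis is secured only after (i) is in hand. Your final paragraph gestures at Proposition~\ref{PQdivisible} but does not flag this dependency, and if you try to run (ii) in isolation the argument will stall at writing $E^\circ$ in the required $L=x+y$ form.
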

\begin{proof}
Let $(a,b,m,n)\in\mathcal{Q}$ and $\delta\in \Delta$. Let $i\in\mathcal{I}$, which will determine the objects appearing in \eqref{uLetc1} and \eqref{uLetc2}. Let the polynomials $E^\circ, F^\circ$, and $\g^\circ$ be defined as in Definition \ref{def:Ecirc} and \ref{def:VF}. In particular, $E^\circ$ is a principal polynomial. We shall prove the lemma in three steps.

\smallskip

\noindent (I) We show that Conjecture~\ref{Jac_conj5} implies Conjecture~\ref{Jac_conj4}{\rm(i)}. 

Let $w=(0,1)$ and $i\in [m(n-m)(a-1)/a+1, m(n-m)]$. 
Suppose that $\ff^\circ$ satisfies the $i$-th collection of $w$-DC. Then Lemma~\ref{Div_to_limit1} implies $w\text{-deg}(\ff^\circ)< \lfloor i(n-m) \epell  \rfloor+m\frac{a-1}{a}$. (Note that the right side of the inequality is $\lfloor \frac{i}{m} \rfloor+m\frac{a-1}{a}=\upperF+1$, so the inequality can be rewritten as ``$w\text{-deg}(\ff^\circ)\le \upperF$''.)  We need to show that  $\ff^\circ$ satisfies the $(i-1)$-th collection of $w$-DC.

The $i$-th collection of $w$-DC implies that $(F^\circ)^{w}_{j}$ is divisible by $L^{j-\lowerF}$ for each $j\in[\lowerF,\upperF]$. 
 Then there exist  $x^*_0,...,x^*_\upperF\in\mathcal{R}$ such that 
$$
F^\circ = \sum_{j=0}^{\upperF}L^{[j-\lowerF]_+} x^*_{j} =\sum_{j=\lowerF}^{\upperF} L^{j-\lowerF} x^*_{j} + \sum_{j=0}^{\lowerF-1} x^*_{j},
$$
where $L^{[j-\lowerF]_+} x^*_{j} \in \mathcal{R}_j^w$  for $j\in [0,\upperF]$.

Assume that $\lfloor i(n-m) \epell  \rfloor \neq \lfloor (i-1)(n-m) \epell  \rfloor$ 
(otherwise, the $i$-th collection of $(0,1)$-DC is the same as the $(i-1)$-th collection of $(0,1)$-DC, so there is nothing to prove).  
We need to show that $x^*_j$ is divisible by $L$ for $j\in[\lowerF,\upperF]$.

By Lemma~\ref{Ecirctopdegpieces} and Proposition~\ref{PQdivisible}(i), there exist  $\Gamma^*_0,...,\Gamma^*_{\upperE-1}\in\mathcal{R}$ such that 
$$
E^\circ = L^{\upperE-\lowerE}y^{\frac{n}{a\delta}} +  \sum_{j=0}^{\upperE-1}L^{[j-\lowerF]_+} \Gamma^*_{j} = L^{\upperE-\lowerE}y^{\frac{n}{a\delta}} +  \sum_{j=\lowerE}^{\upperE-1} L^{j-\lowerE} \Gamma^*_{j} + \sum_{j=0}^{\lowerE-1} \Gamma^*_{j},
$$
with $
L^{[j-\lowerE]_+} \Gamma^*_{j} \in \mathcal{R}_j^w $ for $j\in [0, \upperE-1]$ .


By Theorem~\ref{prop:Magnus_formula_equivalence}, there exists a unique sequence of constants $c_0,c_1,\dots,c_{\mathfrak{m}}\in \mathbb{C}$ such that $c_0=1$ and 
$$
G_{e-\mu} = \sum_{\gamma=0}^{\mu} c_\gamma [h(\g^\circ(E^\circ) + F^\circ)^{\frac{e-\gamma}{d}}]_{t^{\mu-\gamma}}.
$$
Let $S\in \mathcal{S}^w$ be a morphism satisfying $S(\tilde{c}_\beta)=c_\beta$ for $\beta\in[1,\mathfrak{m}]$, $S(\tau^{[j-\lowerE]_+} \tilde{\Gamma}_{j})= L^{[j-\lowerE]_+} \Gamma^*_{j} $ for $j\in [0, \upperE-1]$, and $S(\tau^{[j-\lowerF]_+}  \tilde{x}_{j}) = L^{[j-\lowerF]_+} x^*_{j}$ for $j\in [0,\upperF]$. Then $S, E^\circ, F^\circ$, and $\g^\circ$  satisfy \eqref{mathfrakversion}.
Then \eqref{mathfrakversion2} implies that $S(\tilde{G}_{B,e-\mu})={G}_{e-\mu}$ for $B=\{\beta\in[1,\mathfrak{m}] \,:\, r(e-\beta)/d \in\mathbb{Z}\}$.
Since $\GG_{e-\mu}=S(\tilde{G}_{B,e-\mu})$ and $$\GG_{e-\mu}\text{ is }\left\{\begin{array}{ll}\text{a polynomial, }&\text{ for }\mu\in [0,e];\\
\text{equal to }0, &\text{ for }\mu\in [e+1,\mathfrak{m}], \end{array} \right.$$ 
the set  $B$  is supported with respect to $(S,E^\circ,F^\circ,\g^\circ)$.

Fix any $h\in [\lowerF,\upperF]$, and suppose that $P_1(x_j)=0\text{ for } j\in [h+1,\upperF]$.  Since $E^\circ$ is a principal polynomial, by Consequence of Conjecture~\ref{Jac_conj5}, we conclude that $P_1(x_h)=0$. Using this argument iteratively for $h=\upperF,\upperF-1,\dots,\lowerF$ in decreasing order, we conclude that $P_1(x_j)=0$ for all $j\in [\lowerF,\upperF]$. This implies that  $x^*_{j}$ is divisible by $L$ for $j\in[\lowerF,\upperF]$, because $S(\tilde{x}_{j})$ is the product of a monomial in $y^{\pm\frac{1}{mn}}$ and a polynomial in $x$ and $y$
by condition \eqref{mathfrakversion}.

\smallskip

\noindent (II) We show that Conjecture~\ref{Jac_conj5} and Conjecture~\ref{Jac_conj4}(i) together  imply Conjecture~\ref{Jac_conj4}(ii). 
The proof is similar to (I) and we will focus on the difference. 

Let $w=(1,1)$ and $i\in [m(n-m)(a-1)/a, m(n-m)]$. 
Suppose that $\ff^\circ$ satisfies the $i$-th collection of $w$-DC. Then Lemma~\ref{Div_to_limit2} implies  $w\text{-deg}(\ff^\circ)<\lfloor (i+1)m\epell\rfloor+n\frac{a-1}{a}$ (where the inequality can again be rewritten as ``$w\text{-deg}(\ff^\circ)\le \upperF$'').   We need to show that  $\ff^\circ$ satisfies the $(i-1)$-th collection of $(0,1)$-DC.
Write
$F^\circ = \sum_{j=0}^{\upperF}L^{[j-\lowerF]_+} x^*_{j}$
and assume $\lfloor (i+1) m\epell  \rfloor \neq \lfloor im\epell  \rfloor$. 
We need to show that $x^*_j$ is divisible by $L$ for $j\in[\lowerF,\upperF]$. The rest is similar to (I). Note that we need to assume Conjecture~\ref{Jac_conj4}(i) in order  to apply Proposition~\ref{PQdivisible}(ii).

\noindent (III) We show that Conjecture~\ref{Jac_conj5} and Conjecture~\ref{Jac_conj4}(i)(ii) together  imply Conjecture~\ref{Jac_conj4}(iii). 
The proof is similar to (I)(II) and we will focus on the difference. 

Let $i\in[1,\frac{m(n-m)(a-1)}{a}]$, and suppose that $\ff^\circ$ satisfies the $i$-th collection of DSC. 
Lemma~\ref{not_both} implies that 
there does not exist $i$ such that $\lfloor i(n-m) \epell  \rfloor \neq\lfloor (i-1)(n-m) \epell  \rfloor$ and $\lfloor (i+1) m \epell \rfloor\neq \lfloor i m \epell \rfloor$. 

Case (i): if $\lfloor i(n-m) \epell  \rfloor \neq\lfloor (i-1)(n-m) \epell  \rfloor$. In this case, $\lfloor (i+1) m \epell \rfloor =\lfloor i m \epell \rfloor$,  thus $\ff^\circ$ satisfies the $(i-1)$-th collection of $(1,1)$-DC because it is the same as the $i$-th collection. Let $w=(0,1)$, by the $i$-th collection of DSC we can write 
$F^\circ = \sum_{j=0}^{\upperF}L^{[j-\lowerF]_+} x^*_{j}$ where $\upperF=b_i$. Moreover, when 
$\frac{a\delta\lfloor i(n-m) \epell  \rfloor}{n-m}=\frac{a\delta\lfloor (i+1) m \epell \rfloor}{m} \in \mathbb{Z}$, 
we have 
$\text{supp}(F^\circ)\subseteq T_{a_i,b_i-1}$ by Lemma \ref{multiple_adelta}, so we can set $\upperF=b_i-1$. The rest is similar to (I).

Case (ii): if $\lfloor (i+1) m \epell \rfloor \neq \lfloor i m \epell \rfloor$. In this case, $\lfloor i(n-m) \epell  \rfloor =\lfloor (i-1)(n-m) \epell  \rfloor$,  thus $\ff^\circ$ satisfies the $(i-1)$-th collection of $(0,1)$-DC because it is the same as the $i$-th collection. Let $w=(1,1)$, by the $i$-th collection of DSC we can write 
$F^\circ = \sum_{j=0}^{\upperF}L^{[j-\lowerF]_+} x^*_{j}$ where $\upperF=a_i+b_i$. Moreover, when 
$\frac{a\delta\lfloor i(n-m) \epell  \rfloor}{n-m}=\frac{a\delta\lfloor (i+1) m \epell \rfloor}{m} \in \mathbb{Z}$, 
we have 
$\text{supp}(F^\circ)\subseteq T_{a_i,b_i-1}$ by Lemma \ref{multiple_adelta}, so we can set $\upperF=a_i+b_i-1$. The rest is the same as (II). 

This completes the proof. 
\end{proof}


\section{Appendix}\label{appendix}
In \cite{GGV14}, C. Valqui, J. A. Guccione and J. J. Guccione gave another conjecture which is equivalent to the Jacobian conjecture. Here we  slightly rephrase \cite[Theorem 1.9]{GGV14}. 

\begin{conjecture}\label{GGV_conj}
Let $a,b\in\mathbb{Z}_{>0}$ be such that $a\not|b$ and $b\not|a$. Let $\lambda_i\in \mathbb{C}$ for $i\in [0,a+b-2]$, with $\lambda_0=1$. Suppose that $F,G\in\mathcal{R}$ and $C,P\in\mathbb{C}[y]((x^{-1}))$ satisfy the following:

\noindent (1) $C$ has the form
$$
C=x + C_{-1}x^{-1} + C_{-2}x^{-2} + \cdots\text{ with each }C_{-i}\in \mathbb{C}[y];
$$

\noindent (2) $\text{deg}(C)=1$, and $\text{deg}(P)\leq 2-a$;

\noindent (3) if $\text{deg}(P)=2-a$ then $P_+=x^{1-a}y$, where $P_{+}$ is taken with respect to the $(1,0)$-grading;

\noindent (4) $C^a=G$ and $F=\sum_{i=0}^{a+b-2}\lambda_iC^{b-i} + P$;

\noindent (5) $[F,G]\in  \mathbb{C}$.

Then $[F,G]=0$.
\end{conjecture}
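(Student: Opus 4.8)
The plan is to prove Conjecture~\ref{GGV_conj} by reducing it to the conjectural chain of this paper: I would translate the Laurent-series data $(F,G,C,P,\lambda_i)$ into the trapezoidal Newton-polygon data of Conjecture~\ref{Jac_conj2} and then invoke the implications already established. If $[F,G]=0$ there is nothing to prove, so assume $[F,G]\in\mathbb{C}\setminus\{0\}$, i.e.\ $(F,G)$ is a Jacobian pair; the goal becomes to derive a contradiction. From $C=x+C_{-1}x^{-1}+\cdots$ and $G=C^a$ the $(1,0)$-leading form of $G$ is $x^a$, and from $F=\sum_{i=0}^{a+b-2}\lambda_iC^{b-i}+P$ with $\deg P\le 2-a$ and $\lambda_0=1$ the $(1,0)$-leading form of $F$ is $x^b$. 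Using the standard preparation (as in the proof of Lemma~\ref{BtoA}, together with Lemma~\ref{lemma:N(F) change variables}), apply a triangular automorphism $\xi$ of $\mathbb{C}[x,y]$ so that $N^0(\xi(F))=T_{m,n}$ for some $n>m>0$ and the divisibility conditions (2),(3) of Conjecture~\ref{Jac_conj2} hold; $[F,G]\in\mathbb{C}$ forces $N^0(\xi(G))$ similar to $N^0(\xi(F))$, so $N^0(\xi(G))=T_{bm/a,bn/a}$ after rescaling $G$, and the hypothesis ``$a\nmid b$, $b\nmid a$'' becomes, after reducing $a:b$ to lowest terms, ``$2\le a<b$'' with $\gcd(a,b)=1$. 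Thus $\xi(F),\xi(G)$ satisfy (1)--(5) of Conjecture~\ref{Jac_conj2} for some $(a,b,m,n)\in\mathcal{Q}$.

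The heart of the argument is the dictionary between the two presentations. Under $\xi$, the $a$-th root $C\in\mathbb{C}[y]((x^{-1}))$ of $G$ corresponds to a Laurent completion of a power of the $F$-generator $E^\circ$ of Definition~\ref{def:Ecirc}; the generalized Magnus formula (Theorem~\ref{prop:Magnus_formula_equivalence}) is exactly what makes this identification canonical. Correspondingly, the finite sequence $\lambda_0=1,\lambda_1,\dots,\lambda_{a+b-2}$ matches a Tschirnhausen polynomial $\g^\circ\in\mathbb{T}_F$ with $\deg\g^\circ=a\delta$ (Lemma~\ref{Ecirctopdegpieces}(ii)), and the remainder $P$ matches $\ff^\circ=\ff-\g^\circ(E^\circ)$ of Definition~\ref{def:VF}. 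The bound $\deg P\le 2-a$, and $P_+=x^{1-a}y$ in case of equality, should translate — via Proposition~\ref{PQdivisible}, Lemma~\ref{multiple_adelta}, and Lemma~\ref{not_both} — precisely into ``$\supp(\ff^\circ)\subseteq N^0(\ff)\setminus\mathcal{N}''$'' together with the first divisibility conditions on $\ff^\circ$; and this translation should be reversible, giving a genuine equivalence of the two setups.

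Granting the dictionary, the conclusion ``$[F,G]=0$'' matches ``$\ff^\circ=0$'', which is statement (iv) of Conjecture~\ref{Jac_conj3}: if $\ff^\circ=0$ then $F=\g^\circ(E^\circ)$ and, since $\deg\g^\circ=a\delta\ge a>1$, Lemma~\ref{CtoB} yields $[F,G]=0$, contradicting $[F,G]\ne0$; hence the assumed nontrivial case cannot occur and the conjecture follows. To obtain $\ff^\circ=0$ one runs the established chain backwards: the (unnamed) lemma of Section~\ref{EtoDsection} gives ``Conjecture~\ref{Jac_conj5} $\Rightarrow$ Conjecture~\ref{Jac_conj4}'', Lemma~\ref{DtoC} gives ``Conjecture~\ref{Jac_conj4} $\Rightarrow$ Conjecture~\ref{Jac_conj3}'', and Conjecture~\ref{Jac_conj3}(iv) is ``$\ff^\circ=0$''. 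Hence once the remainder vanishing conjecture (Conjecture~\ref{Jac_conj5}) is proved — the aim of the forthcoming papers — Conjecture~\ref{GGV_conj} follows.

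The step I expect to be the main obstacle is the dictionary of the second paragraph, for two reasons. First, the preparation in $\mathbb{C}[y]((x^{-1}))$ underlying Conjecture~\ref{GGV_conj} is intrinsically one-sided and is phrased with the $(1,0)$-grading and the monomial $x$, whereas Conjecture~\ref{Jac_conj2} uses two-sided divisibility conditions phrased with the $(0,1)$- and $(1,1)$-gradings and the binomials $x+1$, $x+y$; one must check, again via Lemma~\ref{lemma:N(F) change variables}, that the normalizing automorphism $\xi$ converts one set of conditions into the other exactly. Second, $C$ as given in Conjecture~\ref{GGV_conj} is a specific $a$-th root of $G$, not necessarily the deepest one, so to match it with the principal polynomial $E^\circ$ one must first pass to the universal factorization of Lemma~\ref{maximal_factorization}, absorbing the discrepancy into $\delta\in\Delta$ and into replacing $\{\lambda_i\}$ by the coefficients of $\g^\circ$. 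Verifying that each of these steps is reversible — so that Conjecture~\ref{GGV_conj} is genuinely equivalent to, rather than merely implied by, the endpoint of this paper's program — is the technical core; everything else is the implication chain already assembled above.
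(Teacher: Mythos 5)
You have set out to prove a statement that the paper itself does not prove and cannot prove: Conjecture~\ref{GGV_conj} is stated in the appendix as a slight rephrasing of a conjecture of Guccione--Guccione--Valqui that is \emph{equivalent to the two-dimensional Jacobian conjecture}, and it appears there only for comparison with Conjecture~\ref{Jac_conj3}. There is no proof of it in the paper, so the only honest outcome of your attempt would be a conditional implication, and that is indeed all your proposal delivers: its punchline is ``once the remainder vanishing conjecture (Conjecture~\ref{Jac_conj5}) is proved, Conjecture~\ref{GGV_conj} follows.'' Since Conjecture~\ref{Jac_conj5} is explicitly left open (deferred to forthcoming papers), this is not a proof of the statement, and presenting it as one is the first and decisive gap.

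Even judged as a conditional reduction, the argument is incomplete. The entire weight rests on the ``dictionary'' of your second paragraph --- identifying the Laurent series $C\in\mathbb{C}[y]((x^{-1}))$ with (a completion of a power of) the principal generator $E^\circ$ of Definition~\ref{def:Ecirc}, the coefficients $\lambda_0,\dots,\lambda_{a+b-2}$ with some $\g^\circ\in\mathbb{T}_F$ of degree $a\delta$ (Lemma~\ref{Ecirctopdegpieces}), and the condition $\deg(P)\le 2-a$ (with $P_+=x^{1-a}y$ in the extremal case) with ${\rm supp}(F^\circ)\subseteq N^0(F)\setminus\mathcal{N}''$ together with the divisibility conditions --- and you state yourself that this is the main obstacle while carrying out none of it. It is genuinely delicate: the data of Conjecture~\ref{GGV_conj} live in a one-sided $(1,0)$-graded completion, the normalizing automorphism $\xi$ does not obviously transport the specific root $C$ of $G$ to anything related to the deepest factorization of Lemma~\ref{maximal_factorization}, and the integers $a,b$ there need not be coprime yet enter the hypotheses structurally (through $C^a=G$, the exponents $b-i$, and the bound $\deg(P)\le 2-a$), so ``reducing $a:b$ to lowest terms'' is not a harmless normalization. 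If your aim is only the conditional statement, a far shorter route avoids the dictionary entirely: Conjecture~\ref{Jac_conj5} implies Conjecture~\ref{Jac_conj} through the chain of Section~\ref{EtoDsection} and Lemmas~\ref{DtoC}, \ref{CtoB}, \ref{BtoA}, hence the Jacobian conjecture by Abhyankar's reduction cited in Section~\ref{section_intro}, hence Conjecture~\ref{GGV_conj} by the equivalence established in the cited work of Guccione--Guccione--Valqui. But neither your route nor this one yields the statement itself with the tools available in this paper.
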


As suggested by Christian Valqui, it would be interesting to see if the generalized Magnus' formula could help solve the systems of polynomial equations obtained from the condition (4). 

\end{document}